\newcommand{\xkh}[1]{\left(#1\right)}
\newcommand{\xxkh}[1]{(#1)}
\newcommand{\dkh}[1]{\left\{#1\right\}}
\newcommand{\zkh}[1]{\left[#1\right]}
\newcommand{\nj}[1]{\langle {#1} \rangle}
\newcommand{\norm}[1]{\left\|{#1}\right\|_2}
\newcommand{\norminf}[1]{\left\|{#1}\right\|_{\infty}}
\newcommand{\norms}[1]{\left\|{#1}\right\|}
\newcommand{\normsg}[1]{\left\|{#1}\right\|_{\psi_2}}
\newcommand{\normse}[1]{\left\|{#1}\right\|_{\psi_1}}
\newcommand{\abs}[1]{\left\lvert#1\right\rvert}
\newcommand{\aabs}[1]{\lvert#1\rvert}
\newcommand{\argmin}[1]{{\rm arg\;}\mathop{\rm min}\limits_{#1}}
\newcommand{\minm}[1]{\mathop{\rm min}\limits_{#1}}
\newcommand{\maxm}[1]{\mathop{\rm max}\limits_{#1}}
\newcommand{\sumin}{\sum_{i=1}^n}
\newcommand{\dist}[1]{\mathrm {dist}\left(#1\right)}
\newcommand{\dd}{{\mathrm d}}
\newcommand{\e}{{\mathrm e}}
\newcommand{\E}{{\mathbb E}}
\newcommand{\PP}{{\mathbb P}}
\newcommand{\1}{{\mathds 1}}
\newcommand{\R}{{\mathbb R}}
\newcommand{\C}{{\mathbb C}}
\newcommand{\betat}{{\tilde{\beta}}}
\newcommand{\real}[1]{\mathrm{Re}\left(#1\right)}
\newcommand{\hh}{\mathsf{H}}
\newcommand{\lkh}{{(l)}}
\newcommand{\T}{\top}
\newcommand{\zxi}{z_i^*}
\newcommand{\zxis}{z_i^{*2}}
\newcommand{\zxic}{\bar{z}_i^*}
\newcommand{\zxj}{z_j^*}
\newcommand{\zxjc}{\bar{z}_j^*}
\newcommand{\wi}{w_i}
\newcommand{\wic}{\bar{w}_i}
\newcommand{\wics}{\bar{w}_i^{2}}
\newcommand{\wj}{w_j}
\newcommand{\wjc}{\bar{w}_j}
\newcommand{\va}{{\bm a}}
\newcommand{\vu}{{\bm u}}
\newcommand{\vx}{{\bm x}}
\newcommand{\vy}{{ \bm{ y}}}
\newcommand{\vone}{{\bm 1}}
\newcommand{\vz}{{\bm z}}
\newcommand{\vzt}{{\bm z}^t}
\newcommand{\vztu}{\tilde{\bm z}^t}
\newcommand{\vztl}{{\bm z}^{t,(l)}}
\newcommand{\vztlu}{\tilde{\bm z}^{t,(l)}}
\newcommand{\vztlh}{\hat{\bm z}^{t,(l)}}
\newcommand{\vztt}{{\bm z}^{t+1}}
\newcommand{\vzttu}{\tilde{\bm z}^{t+1}}
\newcommand{\vzttl}{{\bm z}^{t+1,(l)}}
\newcommand{\vzttlu}{\tilde{\bm z}^{t+1,(l)}}
\newcommand{\vzttlh}{\hat{\bm z}^{t+1,(l)}}
\newcommand{\vzx}{{\bm z}^*}
\newcommand{\vzxt}{{\bm z}^{*\top}}
\newcommand{\vzxh}{{\bm z}^{*\hh}}
\newcommand{\vzxc}{\bar{{\bm z}}^{*}}
\newcommand{\vxi}{{\bm \xi}}
\newcommand{\vw}{{\bm w}}
\newcommand{\vwt}{{\bm w}^{\top}}
\newcommand{\vwh}{{\bm w}^{\hh}}
\newcommand{\vwc}{\overline{{\bm w}}}
\newcommand{\tz}{{\tilde{\bm{z}}}}
\newcommand{\ma}{{\bm A}}
\newcommand{\mb}{{\bm B}}
\newcommand{\md}{{\bm D}}
\newcommand{\mi}{{\bm I}}
\newcommand{\mm}{{\bm M}}
\newcommand{\mzero}{{\bm 0}}
\newcommand{\mml}{{\bm M}^{\lkh}}
\newcommand{\ve}{{\bm e}}
\newcommand{\ep}{\varepsilon}
\newcommand{\mdiag}{{\rm diag}}
\renewcommand{\omega}{\eta}
\newcommand{\phih}{\hat{\phi}}
\newcommand{\sqrttwo}{\sqrt{2}}
\newcommand{\sqrtonetwo}{\frac{1}{\sqrt{2}}}
\newcommand{\sqrtn}{\sqrt{n}}
\newcommand{\sqrtm}{\sqrt{m}}
\newcommand{\sqrtlm}{\sqrt{\log m}}
\newcommand{\sqrtnlm}{\sqrt{n \log m}}
\newcommand{\sqrtnlmmm}{\sqrt{n \log^3 m}}
\newcommand{\gl}{\gamma_l}
\newcommand{\RNum}[1]{\uppercase\expandafter{\romannumeral #1\relax}}
\newcommand{\deltaztx}{\begin{bmatrix}
		\vztu-\vzx\\
		\overline{\vztu-\vzx}
\end{bmatrix}}
\newtheorem{definition}{Definition}[section]
\newtheorem{theorem}[definition]{Theorem}
\newtheorem{lemma}[definition]{Lemma}
\newtheorem{remark}[definition]{Remark}
\date{}
\begin{document}

\author{Haiyang Peng}
\address{School of Mathematical Sciences, Beihang University, Beijing, 100191, China }
\email{haiyangpeng@buaa.edu.cn}

\author{Deren Han}
\address{School of Mathematical Sciences, Beihang University, Beijing, 100191, China }
\email{handr@buaa.edu.cn}

\author{Linbin Li}
\address{School of Mathematical Sciences, Beihang University, Beijing, 100191, China }
\email{linbinli@buaa.edu.cn}

\author{Meng Huang}
\address{School of Mathematical Sciences, Beihang University, Beijing, 100191, China }
\email{menghuang@buaa.edu.cn}
%\thanks{This work was partially supported by the Ministry of Science and Technology of China (No.2021YFA1003600). D. Han was partially supported by NSFC (Nos. 12326334 and 12131004) and the R\&D project of Pazhou Lab (Huangpu, 2023K0604). The work of M. Huang was supported by NSFC grant (12201022) and the Fundamental Research Funds for the Central Universities.}

%\thanks{}

\baselineskip 18pt
\bibliographystyle{plain}

\title{Noisy phase retrieval from Subgaussian measurements}

\maketitle
%\thanks

\begin{abstract}
This paper aims to address the phase retrieval problem from subgaussian measurements with arbitrary noise, with a focus on devising robust and efficient  algorithms for solving non-convex problems.	 To ensure uniqueness of solutions in the subgaussian setting, we explore two commonly used assumptions: either the subgaussian measurements satisfy a fourth-moment condition or the target signals exhibit non-peakiness. For each scenario, we introduce a novel spectral initialization method that yields robust initial estimates.    Building on this, we employ leave-one-out arguments to show that the classical Wirtinger flow algorithm achieves a linear rate of convergence for both real-valued and complex-valued cases,  provided the sampling complexity  $m\ge O(n \log^3 m)$, where $n$ is  the dimension of the underlying signals.   In contrast to existing work, our algorithms are regularization-free, requiring no truncation, trimming, or additional penalty terms, and they permit the algorithm step sizes as large as  $O(1)$, compared to the $O(1/n)$ in previous literature.  Furthermore, our results accommodate arbitrary noise vectors that meet certain statistical conditions, covering a wide range of noise scenarios, with sub-exponential noise as a notable special case. The effectiveness of our algorithms is validated through various numerical experiments.  We emphasize that our findings provide the first theoretical guarantees for recovering non-peaky signals using non-convex methods from Bernoulli measurements, which is of independent interest.
\end{abstract}

% REQUIRED
\keywords{Keywords: noisy phase retrieval, subgaussian measurements, Wirtinger flow, leave-one-out analysis}

%% REQUIRED
%\begin{MSCcodes}
%	90C26, 90C30, 94A12
%\end{MSCcodes}

\section{Introduction}
\subsection{Phase retrieval}
We consider the \textit{phase retrieval} problem, where the goal is to recover the signal $\vzx \in \C^n$ from  measurements
\begin{equation}\label{def:measurements}
	y_j:=\abs{\nj{\va_j,\vzx}}^2+\xi_j, \quad  j=1,\ldots,m.
\end{equation}
Here,  $ \va_j \in \C^n$ are known sensing vectors and $\bm{\xi}:=(\xi_1,\ldots,\xi_m)^\T \in \R^m$ is a noise vector.  Throughout this paper, we make the assumption that the noise vector  is either fixed or random, and is
independent to all sensing vectors $\va_j$. Phase retrieval is a significant research topic in fields such as X-ray crystallography \cite{Harrison1993, Millane1990}, optics \cite{Gonsalves1982, Walther1963}, quantum information \cite{Heinosaari2013}, quantum mechanics \cite{Corbett2006}, and interferometry \cite{Demanet2017}. The central challenge in phase retrieval is to reconstruct phase information solely from intensity-based measurements.

%	The genesis of phase retrieval traces back to crystallographic research. Crystals with periodic structures typically exhibit Fourier transform peaks in far-field information. Sayre postulated that in certain simple cases, the phase information of scattered waves could be recovered based on fine features of the Fourier intensity measurements, such as peak information \cite{Sayre1952}. In the 1970s, iterative algorithms like the Gerchberg-Saxton algorithm (GS) \cite{GS1971} first put this idea into practice, reconstructing phase information from Fourier magnitudes of two-dimensional images and constraints. Subsequent research has made advancements in the realm of projection algorithms \cite{Bauschke2002, Bauschke2003, Luke2004}.

Based on the least squares criterion, one can employ the following Wirtinger-based empirical loss to
estimate $\vzx$:
\begin{equation}\label{def:f}
	\minm{\vz\in\C^n}\quad f(\vz) = \frac{1}{2m} \sum_{j=1}^{m}\xkh{\abs{\nj{\va_j,\vz}}^2-y_j}^2.
\end{equation}
In scenarios where the measurements $\va_j$ are standard Gaussian random vectors, numerous robust phase retrieval algorithms have been adopted  to efficiently solve \eqref{def:f}, with notable examples including works by \cite{CaiJianfeng2022, Candes2015WF, ChenYuxin2019, Macong2020, SunJu2018, HuangMeng2022}.  However,  when considering a broader class of subgaussian measurements, there are few algorithms with robust theoretical guarantees, especially for phase retrieval from symmetric Bernoulli measurements.  This paper focus on noisy phase retrieval with subgaussian measurements and aims  to investigate the rate of convergence  of the classical Wirtinger flow method based on two prevalent assumptions: either the subgaussian measurements satisfy a fourth-moment condition or the target signals exhibit non-peakiness.

\subsection{Related work}
Over the past decade, researchers have devised numerous robust phase retrieval algorithms under standard Gaussian measurements. Cand{\`e}s et al. introduced the PhaseLift algorithm, demonstrating its capability for ensuring precise recovery of phase information in the absence of noises \cite{Candes2013Phaselift}. Waldspurger et al. formalized phase retrieval  as a quadratic optimization problem on the unit complex torus and solved it using the block coordinate descent algorithm following convex relaxation \cite{Waldspurger2015}. However, those convex approaches involve lifting the problem to a higher-dimensional space, which is  inefficient for addressing large-scale problems. Further explorations of convex optimization techniques for phase retrieval can be found in \cite{Bahmani2017, Doelman2018, Goldstein2018}.

The non-convex methods for phase retrieval operate directly on the original space, achieving significantly improved computational performance. A prominent example of non-convex algorithms is the Wirtinger flow (WF), introduced by Cand{\`e}s et al. in 2015 \cite{Candes2015WF}. Its recovery guarantee hinges on a well-designed initialization and a gradient-based refinement procedure, enabling a linear rate of convergence with a moderate number of measurements. Ma et al. revisited WF and refined the theoretical step size significantly using the leave-one-out technique \cite{Macong2020}. Employing the same approach, Chen et al. demonstrated that WF can effectively converge to a global minimizer even with random initialization in the real-valued scenario \cite{ChenYuxin2019}.
%Subsequently, Chen et al. delved into scenarios where measurements encompassed subgaussian noise, scrutinizing the convex and non-convex models of blind deconvolution by leave-one-out method \cite{ChenYuxin2023}.
To improve the sampling complexity and the convergence performance of WF,
%Another category of enhancement for WF involves the integration of truncation regulations.
Chen et al. proposed truncated Wirtinger flow algorithm (TWF) by devising rational truncation rules, and proved that it can provide more accurate initial guess and better theoretical guarantees \cite{ChenYuxin2015TWF}.  Other algorithms for solving phase retrieval problem under standard Gaussian measurements includes incremental truncated Wirtinger flow (ITWF) \cite{Kolte2016ITWF},  median-TWF \cite{ZhangHuishuai2018median-TWF}, and amplitude-based methods \cite{Luoqi2020, SamuelPinilla2018, Wanggang2017sto, Wanggang2017, Wanggang2018,  ZhangHuishuai2016Reshaped}, just to name a few.
%	In addition, researchers applied WF to some amplitude-based loss functions \cite{ZhangHuishuai2016, Wanggang2017, Wanggang2017sto,Wanggang2018,SamuelPinilla2018,Luoqi2020}.
The idea of WF for phase retrieval has inspired research in diverse domains. These include blind deconvolution \cite{ChenYuxin2023,LiXiaodong2019,Macong2020}, blind demixing \cite{PeterJung2017,LingShuyang2019}, and matrix completion \cite{ChenYuxin2020}.

It is important to note that algorithms for phase retrieval discussed earlier rely on i.i.d. Gaussian random samples, representing only a limited set of measurement types.  When extending from Gaussian to subgaussian measurements, additional assumptions are necessary to ensure uniqueness. For instance, if the measurements $\va_j$ comes from i.i.d.  symmetric Bernoulli distribution - where each entry of $\va_j$ takes the values $+1$ and $-1$ with probability $1/2$ - the vector $\vz_1:=(1,0,\ldots,0)^\T$ can never be distinguished from the vector $\vz_2:=(0,1,0,\ldots,0)^\T$.  Further elucidations on ambiguities in phase retrieval from subgaussian measurements are provided in \cite{Krahmer2020}.  To make the problem well-posed, some research focuses on real-valued signals and requires that  each entry of measurement vectors satisfies the so-called small-ball probability \cite{ChenYuxin2015Convex} or
\begin{equation}\label{eq:assum1}
	\E \zkh{\abs{a_{jk}}^4}>\E \zkh{\abs{a_{jk}}^2}
\end{equation}
for all $j \in [m]$ and $k \in [n]$. Other research approaches the problem from the perspective of target signals, assuming that the peak-to-average power ratio of $\vzx$ satisfies
\begin{equation}\label{eq:assum2}
	\frac{\norminf{\vzx}}{\norm{\vzx}} \le \mu <1
\end{equation}
for some constant $\mu>0$ \cite{Krahmer2018}. Under the fourth-moment assumption, the authors in \cite{LiHuiping2020PhaseMax} proposed a PhaseMax method to tackle the phase retrieval problem through convex relaxation. Subsequently,  \cite{LiHuiping2021} proposed a variant of gradient descent method based on Riemannian optimization, termed truncated Riemannian gradient descent algorithm (TRGrad), and demonstrated its linear rate of convergence. Gao et al. explored complex  subgaussian measurements, and demonstrated that the Wirtinger flow method with spectral initialization converges linearly to the underlying signals \cite{Gaobing2021}.  Revealing a profound insight, Krahmer et al. demonstrated that a broad range of vectors $\vx\in\R^n$ can be uniquely reconstructed from subgaussian measurements without necessitating the small-ball probability assumption or the fourth-moment condition \cite{Krahmer2018}, enabling phase retrieval from symmetric Bernoulli measurements.   They classified these vectors as meeting the \(\mu\)-flatness criterion as in \eqref{eq:assum2} and established that the PhaseLift method can effectively recover such \(\mu\)-flatness signals with high probability.  Subsequently,  Krahmer et al. extended this results from the real-valued case to the complex-valued case \cite{Krahmer2020}. To the best of our knowledge, PhaseLift \cite{Krahmer2018,Krahmer2020} stands out as the sole method for phase retrieval from symmetric Bernoulli measurements,  with no non-convex algorithm currently accessible for solving this specific type of phase retrieval problem.

%\clearpage
\subsection{Our contributions}\label{sec:contributions}
This paper aims to propose efficient algorithms with theoretical guarantees for noisy phase retrieval from sub-Gaussian measurements under either Assumption \eqref{eq:assum1} or Assumption \eqref{eq:assum2}. To this end, our approach involves (i) designing robust spectral initialization procedures to provide a reliable initial guess for each assumption, and (ii) refining this initial guess using the classical Wirtinger flow method. For each assumption, we establish a linear rate of convergence  for the proposed algorithm through leave-one-out analysis. In contrast to existing methods, our algorithms are regularization-free, requiring no truncation/trimming or additional penalty terms, and permit a step size of   $O(1)$. Moreover, our results apply to any noise vector $\vxi$ that satisfies certain statistical properties, covering a broad spectrum of noise scenarios, with sub-exponential noise as a notable special case.
The results are summarized below:
\subsubsection{Results under the fourth-moment condition (\ref{eq:assum1})}
Suppose that the measurement vectors $\va_1,\cdots,\va_m\in \C^n$ are independent copies of a vector $\va=(a_1,a_2,\cdots,a_m)^\T \in \C^n$.  We assume that  the vector $\va$ obeys the following assumption: \medskip
\paragraph{\textit{Assumption A}} The entries of $\va $ are i.i.d. subguassian random variables with $\normsg{a_1}\le K$ for a constant $K>0$, $\E a_1=0$, $\E\abs{a_1}^2=1$, $\E\abs{a_1}^4= 1+\beta_1$ and $\abs{\E a_1^2}^2 = 1-\beta_2$ for some $\beta_1 \in (0,+\infty)$ and $\beta_2 \in (0,1]$. \medskip

\begin{algorithm}
	\caption{Vanilla gradient descent for phase retrieval (with Assumption A)}\label{alg:1}
	\begin{algorithmic}[0]
		\State \textbf{Input:} The sensing vectors $\dkh{\va_j}_{1\le j\le m}$, the observations  $\dkh{y_j}_{1\le j\le m}$, the parameters $\beta_1$, $\beta_2$ and $\eta_0$, the maximum number of iterations $T$.
		\State \textbf{Spectral initialization: } Let $\check{\vz}^0$ be the left singular vector for the leading singular value of \begin{equation*}
			\mm = \frac{1}{\beta_2}\mm_0+\frac{2-\beta_1-\beta_2}{\beta_1(2-\beta_2)}\mdiag\xkh{\mm_0}-\frac{2-2\beta_2}{\beta_2(2-\beta_2)}\real{\mm_0} + \frac{\beta_1-1}{\beta_1} \gamma \mi,
		\end{equation*}
		where
		\[
		\mm_0 = \frac{1}{m}\sum_{j=1}^{m} y_j\va_j\va_j^\hh  \qquad \mbox{and} \qquad  \gamma = \frac{1}{m}\sum_{j=1}^{m} y_j.
		\]
		Set $\vz^0 = \sqrt{\gamma}\check{\vz}^0$.
		\State \textbf{Gradient updates: for} $t = 0,1,2\cdots,T-1$ \textbf{do}\begin{equation*}
			\vztt = \vzt - \eta \nabla_{\vz} f(\vzt),
		\end{equation*}
		where the step size $\eta = \eta_0 /\gamma$. Here, $\nabla_{\vz} f $ is the Wirtinger gradient of $f$ in \eqref{def:f}.
	\end{algorithmic}
\end{algorithm}
We emphasize that the assumption $\abs{\E a_1^2}^2 = 1-\beta_2$ with $\beta_2 \in (0,1]$ cannot be avoided for complex subgaussian measurements. Otherwise, the signal $\vzx$ can never be distinguished from its complex-conjugate $\bar{\vz}^*$ \cite{Krahmer2020}.  In particular, if each entry of $\va $ is independently generated according to $a_j \sim x+iy$, where $x$ and $y$ are zero-mean independent random variables with the same distribution, then $\beta_2=1$. Notably,  the condition $\beta_1>0$ corresponding to  the fourth-moment condition \eqref{eq:assum1}.
Under Assumption A, one has the following result:

\begin{theorem} \label{th:A}
	Suppose that Assumption A holds. For any fixed $\vzx\in\C^n$, assume that the noise vector $\xi \in \R^m$ obeys $\abs{\vone^\T \vxi}\le \tilde{C}_{1} m\norm{\vzx}^2,\ \norm{\vxi}\lesssim \sqrtm \norm{\vzx}^2$, and $\norminf{\vxi}\lesssim \log m\norm{\vzx}^2$ for some sufficiently small constant $\tilde{C}_{1}>0$. Then with probability at least $1 - m\exp (-c_0 n) - O(m^{-10})$,   Algorithm \ref{alg:1} with step size \(\eta = c/\norm{\vzx}^2\)  satisfy
	\begin{align*}%\label{cond:th:1:a}
		%\dist{\vz^0,\vzx}&\le \delta_1\norm{\vzx}\\
		\dist{\vzt,\vzx} \le (1-\frac{\betat c}{8})^t C_{1}\norm{\vzx}+\frac{8\abs{\vone^\T \vxi}+C_{2}K^2\xkh{\sqrt{n} \norm{\vxi} + n \norminf{\vxi}}}{m\betat\norm{\vzx}}
	\end{align*}	
	simultaneously for all $0\le t\le t_0\le m^{10}$, provided \(m \ge C K^8n \log^3 m\) for some sufficiently large constant \(C>0\) depending only on $\beta_1, \beta_2$. Here, $\vone=\xkh{1,1,\ldots,1}^\T\in\R^m$, $\betat=\min\dkh{\beta_1,\beta_2}$; $C_{2}, c_0>0$ are universal constants, and $C_{1},\tilde{C}_{1}>0$ and $0<c<\betat \le 1$ are constants depending only on $\beta_1, \beta_2$.	
\end{theorem}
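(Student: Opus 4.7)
The plan is to follow the now-standard two-stage Wirtinger-flow blueprint: first show that the spectral initialization $\vz^0$ falls into a basin of attraction of $\vzx$, namely $\dist{\vz^0,\vzx}\le C_1\norm{\vzx}$ with a sufficiently small absolute $C_1$, and then prove that inside this basin the gradient iterates contract geometrically at rate $1-\betat c/8$ up to a noise-dependent floor of exactly the size prescribed in the theorem.

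For the initialization, I would compute $\E\mm$ directly under Assumption A. The particular linear combination of $\mm_0$, $\mdiag(\mm_0)$, $\real{\mm_0}$ and $\mi$ in Algorithm \ref{alg:1} is engineered so that the spurious contributions coming from $\E\abs{a_1}^4=1+\beta_1\ne 2$ and $\E a_1^2\ne 0$ cancel, leaving $\E\mm$ equal to a rank-one term proportional to $\vzx\vzxh$ plus the identity shift $\gamma\mi$, with an additional small term linear in $\vxi$. I would then bound the operator-norm deviation $\norms{\mm-\E\mm}$ by a truncation-plus-matrix-Bernstein argument: truncate so that $\abs{\nj{\va_j,\vzx}}^2\lesssim K^2\log m\,\norm{\vzx}^2$, apply matrix Bernstein to the truncated sum, and absorb the truncation tail via a union bound. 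The Davis--Kahan $\sin\theta$ theorem combined with the scalar estimate $\gamma\approx\norm{\vzx}^2$ then yields the desired initial bound, including the noise term $\sqrt n\,\norm{\vxi}+n\,\norminf{\vxi}$, as soon as $m\gtrsim K^8 n\log^3 m$.

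The contraction analysis rests on a leave-one-out induction in the spirit of \cite{Macong2020,ChenYuxin2019}. For every $l\in[m]$ I would define an auxiliary sequence $\vztl$ produced by Algorithm \ref{alg:1} after replacing the $l$-th measurement (in both the spectral step and the gradient step) by its conditional expectation given $\dkh{\va_j}_{j\ne l}$, so that $\vztl$ is independent of $\va_l$. The induction carries three hypotheses for all $0\le t\le t_0$: (i) the target bound on $\dist{\vzt,\vzx}$; (ii) $\max_l\dist{\vzt,\vztl}\lesssim(\sqrtlm/\sqrtm)\,\dist{\vzt,\vzx}+\text{noise}$; and (iii) the incoherence bound $\max_l\absinn{\va_l,\vzt-\vzx}\lesssim\sqrtlm\,\dist{\vzt,\vzx}+\text{noise}$. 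Propagating (i) uses a local regularity condition of the form $\real{\jkh{\nabla f(\vz),\vz-\vzx}}\gtrsim\betat\norm{\vzx}^2\dist{\vz,\vzx}^2$ together with $\norm{\nabla f(\vz)}\lesssim\norm{\vzx}^2\dist{\vz,\vzx}+\text{noise}$; crucially, the incoherence (iii) is what allows the smoothness constant to be $\Theta(\norm{\vzx}^2)$ rather than $\Theta(n\norm{\vzx}^2)$, and hence the step size to be $\Theta(1/\norm{\vzx}^2)$ instead of $\Theta(1/(n\norm{\vzx}^2))$. Propagation of (ii) is the standard two-sequence comparison using $\vztl-\vzt$, and (iii) follows from the independence of $\vztl$ and $\va_l$ combined with (ii) and a standard sub-Gaussian tail bound on $\jkh{\va_l,\vztl-\vzx}$.

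The main obstacle is the local regularity condition for genuinely subgaussian (as opposed to Gaussian) measurements: the expected Hessian of $f$ at $\vz$ is no longer rotationally invariant, and one must exploit the fourth-moment gap $\beta_1>0$ to ensure positivity of the relevant quadratic form along $\vz-\vzx$, and $\beta_2>0$ to separate $\vzx$ from its complex conjugate $\vzxc$ in the complex case; this is why the contraction factor degrades to $\betat=\min\dkh{\beta_1,\beta_2}$ and also why a direction-by-direction analysis of the expected Hessian is needed, at the price of the $K^8\log^3 m$ factor in the sample complexity through sub-Weibull concentration of products of four subgaussian entries. The secondary difficulty is the noise bookkeeping: I would split $\nabla f$ into its noise-free part and a part linear in $\vxi$, and use the three hypotheses $\abs{\vone^\T\vxi}\le\tilde C_1 m\norm{\vzx}^2$, $\norm{\vxi}\lesssim\sqrtm\norm{\vzx}^2$, $\norminf{\vxi}\lesssim\log m\,\norm{\vzx}^2$ to control respectively the rank-one bias along $\vzx$ (responsible for the $\abs{\vone^\T\vxi}/(m\betat\norm{\vzx})$ term), the generic $\ell_2$ contribution (the $\sqrt n\,\norm{\vxi}$ term), and the per-entry truncation contribution from matrix/vector Bernstein (the $n\,\norminf{\vxi}$ term). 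Once the regularity condition and the noise decomposition are in place, the induction closes and yields the claimed geometric rate together with the prescribed error floor under the $O(1)$ step size.
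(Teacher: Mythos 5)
Your proposal follows essentially the same two-stage blueprint as the paper: a debiased spectral estimator whose expectation reduces to $\norm{\vzx}^2\mi+\vzx\vzxh$ (analyzed via Wedin's $\sin\theta$ theorem), followed by a leave-one-out induction whose incoherence hypothesis enables an $O(1)$ step size, with the contraction rate governed by $\betat=\min\dkh{\beta_1,\beta_2}$ through a direction-by-direction analysis of the expected Hessian, and the same three-part noise bookkeeping via $\abs{\vone^\T\vxi}$, $\norm{\vxi}$, and $\norminf{\vxi}$. The only surface-level differences are that the paper constructs the leave-one-out sequence by simply dropping the $l$-th summand (Algorithm \ref{alg:a:leave one out}) rather than substituting its conditional expectation, both of which give the needed independence from $\va_l$, and that it expresses contraction via Hessian-based restricted strong convexity and smoothness (Lemma \ref{le:ric:a}) integrated along the segment from $\vzx$ to $\vztu$ rather than a first-order regularity condition; these are interchangeable here.
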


\begin{remark}
	In the realm of phase retrieval with subgaussian measurements, the work by Gao et al. \cite{Gaobing2021} exhibits similarities to our Theorem \ref{th:A}. They demonstrated that when $m \ge O(n \log^2 n)$, the Wirtinger flow (WF) can attain a linear rate of convergence  with high probability.  However, their results are established in the absence of noise, whereas our results accommodate  arbitrary noise vector obeys certain statistical conditions. Furthermore, their findings necessitate a relatively small step size of $O(1/n)$, leading to a computational complexity of $O(n \log(1/\ep))$ to achieve an $\ep$-accuracy. In contrast,  our results allow the step size as large as $O(1)$, thereby reducing the computational complexity to $O(\log(1/\ep))$.
\end{remark}

\begin{remark}
	Since Theorem \ref{th:A} is valid for any arbitrary noise vector $\vxi$, a natural step is to leverage it to derive convergence guarantees for random noise. Specifically, for zero-mean  subexponential noise $\vxi$ with $\max_{1\le j\le m} \normse{\xi_j}\le \sigma \norm{\vzx}^2$,  according to \cite[Lemma A.7]{Cai2016optimal}, it holds with probability at least $1- \frac3{m}$ that $\norminf{\vxi} \lesssim \sigma \log m \norm{\vzx}^2, ~\norm{\vxi} \lesssim  \sigma \sqrt{m} \norm{\vzx}^2$, and $\abs{\vone^\T \vxi} \lesssim \sigma \sqrt{m \log m} \norm{\vzx}^2$.
Therefore, by applying Theorem \ref{th:A},  the iterations in Algorithm \ref{alg:1} with subexponential noise obey	
	\begin{align*}%\label{cond:th:1:b}
		\dist{\vzt,\vzx}\le& (1-\frac{\betat c}{8})^t C_{1}\norm{\vzx}+\frac{C_{2}' K^2\sigma}{\betat} \sqrt{\frac{n}{m}}\norm{\vzx}
	\end{align*}
	for all $0\le t\le t_0\le m^{10}$, provided $m\ge O(n \log^3 m)$. Here, $C_{2}'>0$ is a universal constant.  It is worth noting that the error bound $O(\sqrt{n/m})$ on the right-hand side is optimal \cite{Eldar2014,Lecue2015,Plan2013}.
\end{remark}

\begin{remark}\label{rm:parameter}
	The constants $C,C_{1},\tilde{C}_{1}$ and $c$ in Theorem \ref{th:A} depend on $\beta_1$ and $\beta_2$.
	The exact values  are:
	\[C=\frac{C_0 (1+\beta_1)^4}{\betat^{4}},\quad  C_{1}=\frac{C_{1}'\betat}{1+\beta_1},\quad \tilde{C}_{1}=\frac{C_{1}'\betat^2}{22(1+\beta_1)^2},\quad  c \le \frac{\betat}{36(4+\beta_1)^{2}}.
	\]
	Here,  $C_0,C'_{1}>0$ are absolute constants.
	This implies that larger values of  $\beta_1$ and $\beta_2$ reduce the required sampling complexity $m$. Additionally, as
	 $\beta_1$ and $\beta_2$ increase, the algorithm converges more quickly. This will be confirmed in our numerical experiments.	
\end{remark}

\subsubsection{Results for non-peakiness signals without the fourth-moment condition}
In scenarios where the measurements $\va_j$ do not satisfy the fourth moment condition \eqref{eq:assum1}, as discussed previously, we are limited to recovering those \(\mu\)-flatness signals as in \eqref{eq:assum2}.  Assume that the measurement vectors $\va_j\in \C^n$ are independent copies of a vector $\va=(a_1,a_2,\cdots,a_m)^\T\in \C^n$.  Then we require the following assumption: \medskip
%\begin{itemize}
%	\item[Assumption B:] The entries of $\va$ are i.i.d. subguassian random variables with $\normsg{a_1}\le K$,  $\E a_1=0$, $\E\abs{a_1}^2=\E\abs{a_1}^4=1$, and $\abs{\E a_1^2}^2 = 1-\beta_2$ for some $\beta_2 \in (0,1]$. Furthermore, the underlying signal $\vzx$ satisfies $\norminf{\vzx}^2/\norm{\vzx}^2\le \mu<1$ for some constant $\mu>0$.
%\end{itemize}
\paragraph{\textit{Assumption B}} The entries of $\va$ are i.i.d. subguassian random variables with $\normsg{a_1}\le K$,  $\E a_1=0$, $\E\abs{a_1}^2=\E\abs{a_1}^4=1$, and $\abs{\E a_1^2}^2 = 1-\beta_2$ for some $\beta_2 \in (0,1]$. Furthermore, the underlying signal $\vzx$ satisfies $\norminf{\vzx}^2/\norm{\vzx}^2\le \mu<1$ for some constant $\mu>0$.

%\setcounter{assumption}{0}
%\renewcommand{\theproposition}{A.\Alph{assumption}}

%\begin{assumption}
%	The entries of $\va$ are i.i.d. subguassian random variables with $\normsg{a_1}\le K$,  $\E a_1=0$, $\E\abs{a_1}^2=\E\abs{a_1}^4=1$, and $\abs{\E a_1^2}^2 = 1-\beta_2$ for some $\beta_2 \in (0,1]$. Furthermore, the underlying signal $\vzx$ satisfies $\norminf{\vzx}^2/\norm{\vzx}^2\le \mu<1$ for some constant $\mu>0$.
%\end{assumption}

\begin{algorithm}
	\caption{Vanilla gradient descent for phase retrieval (with Assumption B)}\label{alg:2}
	\begin{algorithmic}[0]
		\State \textbf{Input:} The sensing vectors $\dkh{\va_j}_{1\le j\le m}$, the observations $\dkh{y_j}_{1\le j\le m}$, the parameters $\beta_2$ and $\eta_0$, the maximum number of iterations $T$.
		\State \textbf{Spectral initialization: } Let $\check{\vz}^0$ be the left singular vector for the leading singular value of \begin{equation*}
			\mm = \frac{1}{\beta_2}\mm_0-\frac{2-2\beta_2}{\beta_2(2-\beta_2)}\real{\mm_0} + \frac{1-\beta_2}{2-\beta_2}\gamma \mi
		\end{equation*}
		where
		\[
		\mm_0 = \frac{1}{m}\sum_{j=1}^{m} y_j\va_j\va_j^\hh \qquad \mbox{and} \qquad  \gamma = \frac{1}{m}\sum_{j=1}^{m} y_j.
		\]
		Set $\vz^0 = \sqrt{\gamma}\check{\vz}^0$.
		\State \textbf{Gradient updates: for} $t = 0,1,2\cdots,T-1$ \textbf{do}\begin{equation*}
			\vztt = \vzt - \eta \nabla_{\vz} f(\vzt),
		\end{equation*}
		where $\eta = \eta_0 /\gamma$.
	\end{algorithmic}
\end{algorithm}
Our results for this assumption are presented as follows.

\begin{theorem} \label{th:B}
	For any fixed $\vzx\in\C^n$, suppose that Assumption B holds with $\mu\le 1/10$ and  the noise vector $\xi \in \R^m$ obeys $\abs{\vone^\T \vxi}\le \tilde{C}_{11}  m\norm{\vzx}^2,\ \norm{\vxi}\lesssim \sqrtm \norm{\vzx}^2$, and $\norminf{\vxi}\lesssim \log m\norm{\vzx}^2$ for some sufficiently small  constant $\tilde{C}_{11}>0$.   Then with probability at least $1 - m\exp (-c_{0} n)-O(m^{-10})$ the Wirtinger flow method given in Algorithm \ref{alg:2} with step size \(\eta = c/\norm{\vzx}^2\) obeys
	\begin{align*}%\label{cond:th:1:b}
		%\dist{\vz^0,\vzx}\le& \delta_{11}\norm{\vzx}\\
		\dist{\vzt,\vzx}\le (1-\frac{\beta_2 c}{8})^t C_{11}\norm{\vzx}+\frac{8\abs{\vone^\T \vxi}+C_{12}K^2\xkh{\sqrt{n} \norm{\vxi} + n \norminf{\vxi}}}{m\beta_2\norm{\vzx}}%\\
		%\normf{\vzt\vz^{t \hh}-\vzx\vzxh}\le& 3\rho^t \delta_{11}\norm{\vzx}^2+3C_{12}\frac{\abs{\vone^\T \vxi}+K^2\xkh{\sqrt{n} \norm{\vxi} + n \norminf{\vxi}}}{m\beta_2},
	\end{align*}
	%\end{subequations}
	simultaneously  for all $0\le t\le t_0\le m^{10}$, provided \(m \ge CK^8n \log^3 m\). Here, \(C>0\) is a  sufficiently large constant depending only on $\beta_2$, $C_{12}>0$ is a universal constant, and
	$C_{11}, \tilde{C}_{11}>0$, $0<c<\beta_2\le 1$ are constants depending only on $\beta_2$.
\end{theorem}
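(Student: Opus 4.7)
The proof runs in parallel to that of Theorem~\ref{th:A} and decomposes into (i) showing that the spectral estimator $\vz^0$ is close to $\vzx$ up to a global phase, and (ii) a leave-one-out induction to track the Wirtinger flow iterates. The essential modification is to replace the role played by the fourth-moment parameter $\beta_1$ in Theorem~\ref{th:A}---which was used to separate the signal direction $\vzx\vzx^\hh$ from ``spurious'' coordinate-aligned directions---by the non-peakiness bound $\norminf{\vzx}^2/\norm{\vzx}^2\le \mu$, which ensures those spurious directions are themselves diluted and cannot compete with the signal.

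\textbf{Step 1: Spectral initialization.} A direct fourth-moment calculation under Assumption~B, using $\mathbb{E}|a_1|^4=1$ and $|\mathbb{E}a_1^2|^2=1-\beta_2$, shows that the linear combination defining $\mm$ in Algorithm~\ref{alg:2} has population mean
\begin{equation*}
	\mathbb{E}[\mm] \;=\; \norm{\vzx}^2\,\mi + \vzx\vzx^\hh + (\text{a negligible noise contribution}),
\end{equation*}
so that the leading eigenvector of the population matrix is exactly $\vzx/\norm{\vzx}$ with spectral gap $\norm{\vzx}^2$. I then control $\|\mm-\mathbb{E}[\mm]\|$ by combining a matrix Bernstein inequality for subgaussian ensembles with truncation arguments on $\max_j|\va_j^\hh\vzx|$; the non-peakiness hypothesis enters critically in these $\ell_\infty$-type estimates and yields operator-norm concentration at the rate $\sqrt{n\log^3 m/m}$. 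The noise is absorbed using the hypotheses $|\vone^\T\vxi|\lesssim m\norm{\vzx}^2$, $\norm{\vxi}\lesssim\sqrt{m}\norm{\vzx}^2$, and $\norminf{\vxi}\lesssim \log m \cdot \norm{\vzx}^2$. Applying the Davis--Kahan theorem then yields $\dist{\vz^0,\vzx}\le C_{11}\norm{\vzx}$ for a suitably small $C_{11}$, provided $m\gtrsim K^8 n\log^3 m$.

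\textbf{Step 2: Leave-one-out induction.} For each $l\in[m]$ I introduce auxiliary iterates $\vztl$ obtained by running Algorithm~\ref{alg:2} on the samples with the $l$-th measurement removed. The induction hypothesis to be propagated is the conjunction of three invariants
\begin{equation*}
	(\mathcal{I}_1):\ \dist{\vzt,\vzx}\le \rho^t C_{11}\norm{\vzx}+\Delta, \qquad (\mathcal{I}_2):\ \max_l \dist{\vzt,\vztl}\lesssim \sqrt{\tfrac{n\log m}{m}}\,\norm{\vzx},
\end{equation*}
\begin{equation*}
	(\mathcal{I}_3):\ \max_l \bigl|\va_l^\hh \vh^t\bigr|\lesssim \sqrt{\log m}\,\norm{\vzx},
\end{equation*}
where $\rho=1-\beta_2 c/8$, $\vh^t$ denotes the phase-aligned error $\vzt-\mathrm{e}^{\i\phi_t}\vzx$, and $\Delta$ is the noise-dependent term appearing in the theorem. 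Given these, a Taylor expansion of $\nabla_{\vz} f(\vzt)$ decomposes the update into a dominant population part that behaves like $(\norm{\vzx}^2\mi + \beta_2\vzx\vzx^\hh)\vh^t$, together with a fluctuation part that is controlled uniformly via $(\mathcal{I}_2)$ and $(\mathcal{I}_3)$; choosing $c=O(\beta_2)$ in the step size then yields the contraction factor $\rho$, while propagating the three noise-norm hypotheses through the same decomposition produces the additive error $\Delta$.

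\textbf{Main obstacle.} The principal difficulty is the local strong-convexity estimate in the absence of the fourth-moment condition. In Theorem~\ref{th:A} positive curvature of $f$ near $\vzx$ is guaranteed by $\beta_1>0$; here that handle is gone. Instead I must show that the offending terms in the restricted Hessian, which scale like $\norminf{\vh^t}^2$, remain dominated by the signal contribution of size $\beta_2\norm{\vh^t}^2\norm{\vzx}^2$. This requires transferring the non-peakiness of $\vzx$ to $\vh^t$ along the entire trajectory, a property enforced by invariant $(\mathcal{I}_3)$ together with the assumption $\mu\le 1/10$. Verifying this propagation uniformly for all $0\le t\le t_0\le m^{10}$ (and simultaneously for the $m+1$ sequences $\vzt,\vztl$) is the most delicate piece of the argument and drives the $\log^3 m$ factor in the sample-complexity bound.
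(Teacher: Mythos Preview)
Your high-level architecture---spectral initialization via eigenvector perturbation, leave-one-out sequences, and three coupled invariants propagated by induction---matches the paper exactly. The gap is in your ``Main obstacle'' paragraph, where you misidentify both the form of the problematic term and the mechanism by which non-peakiness resolves it.

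The difficulty caused by $\beta_1=0$ lives entirely in the \emph{population} Hessian $\nabla^2 F(\vzx)$, not in any iterate-dependent quantity. Expanding $\vu^\hh\nabla^2 F(\vzx)\vu$ for $\vu=\begin{bmatrix}\vw\\ \overline{\vw}\end{bmatrix}$ with $\vw=\vz_1-\vz_2$, the term that was previously rescued by $\beta_1>0$ becomes $-4\beta_2\sum_i\bigl(\real{\zxi\wic}\bigr)^2$. This scales like $\norminf{\vzx}^2\norm{\vw}^2$, not like $\norminf{\vh^t}^2$: one simply bounds
\[
\sum_i\bigl(\real{\zxi\wic}\bigr)^2\;\le\;\sum_i|\zxi|^2|\wi|^2\;\le\;\norminf{\vzx}^2\norm{\vw}^2\;\le\;\mu\,\norm{\vw}^2,
\]
and $\mu\le 1/10$ immediately gives $\vu^\hh\nabla^2 F(\vzx)\vu\ge\frac{7\beta_2}{10}\norm{\vu}^2$. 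No ``transfer of non-peakiness to $\vh^t$'' is needed; the hypothesis on $\vzx$ is applied once, directly, to a deterministic quantity.

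Your invariant $(\mathcal{I}_3)$ is still required, but for the \emph{same} reason as under Assumption~A: it controls the deviation $\norm{\nabla^2 f(\vz)-\nabla^2 F(\vzx)}$ of the empirical Hessian from the population one, via the truncated Bernstein argument. That part of the proof is unchanged and is not where the new difficulty lies. Likewise, in Step~1 the non-peakiness does not enter the concentration of $\mm-\E\mm$ (those bounds are identical to the Assumption~A case); at most it is used to absorb a residual $-\md_1(\vzx)$ term in $\E\mm$ and preserve the spectral gap, again via the one-line estimate $\norm{\md_1(\vzx)}=\norminf{\vzx}^2\le\mu$.
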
	

\begin{remark}
	The exact values of constants $C,C_{11},\tilde{C}_{11}$ and $c$ in Theorem \ref{th:B} are
	\[C=\frac{C_{10}}{\beta_2^{4}},\quad  C_{11}=C_{11}'\beta_2,\quad  \tilde{C}_{11}=\frac{C_{11}'\beta_2^4}{14},\quad  c \le \frac{\beta_2}{324},  \]%,\ \rho=1-\frac{\tilde{c}_{11}\beta_2^2}{16}
	where $C_{10},C_{11}'>0$ are some absolute constants.
\end{remark}

It is worth noting that Theorem \ref{th:A} and Theorem \ref{th:B} are also valid for the real-valued case where the measurements $\va_j \in \R^n$ and the underlying signals $\vzx\in \R^n$. To make the paper more concise, we present the results for the real-valued case in Section \ref{sec:realcase}.  In particular, when applied our results to symmetric Bernoulli measurements, we can show that the Wirtinger flow method is able to recover non-peaky signals with a linear  rate of convergence.  To the best of our knowledge, this is the first non-convex algorithm with theoretical guarantees  for recovering non-peaky signals from symmetric Bernoulli measurements.

\subsection{Notations} \label{sec:notation}
For any vector $\vz$,  we denote the Euclidean norm of a vector as $\norm{\vz}$. Moreover, $\vz^\T$, $\vz^\hh$ and $\bar{\vz}$ represent the transpose, the conjugate transpose, and the entrywise conjugate of $\vz$ respectively.
For a matrix $\ma$, $\sigma_j(\ma)$ and $\lambda_j(\ma)$ denote its $j$-th largest singular value and eigenvalue, while $\norm{\ma}$ denotes its spectral norm. Additionally, $\ma^\T$, $\ma^\hh$ and $\bar{\ma}$ stand for the transpose, the conjugate transpose, and the entrywise conjugate of $\ma$, respectively. We use the notation  $\real{\cdot}$ to denote the real part of a complex number or a complex matrix(i.e., the real part of each entrywise element). The symbol $\mdiag\xkh{\ma}$ represents the diagonal part of matrix $\ma$, while $\mdiag\xkh{a_1,a_2,\cdots,a_n}$ signifies a diagonal matrix with $a_1,a_2,\cdots,a_n\in\C$ as the diagonal elements.
For any vector  \( \vz=\xkh{z_1,z_2,\cdots,z_n}^\T \in\C^n \), the diagonal matrices $\md_1(\vz)$ and $ \md_2(\vz)$, which will be repeatedly utilized in this paper, are defined as follows:
\begin{equation} \label{def:md}
		\md_1(\vz) = \mdiag\xkh{\abs{z_1}^2,\abs{z_2}^2,\cdots,\abs{z_n}^2},\qquad
	\md_2(\vz) = \mdiag\xkh{z_1^2,z_2^2,\cdots,z_n^2}.
\end{equation}
Furthermore, the notation \(f(m,n)=O(g(m,n))\) or \(f(m,n)\lesssim g(m,n)\) denotes the existence of a constant \(c>0\) such that \( f(m,n)\le cg(m,n) \), while \(f(m,n)\gtrsim g(m,n)\) indicates that there exist a constant \(c>0\) such that \(f(m,n)\ge cg(m,n)\).

Since $\vz^* \vz^{*\hh}=\e^{i \phi}\vz^* \xkh{\e^{i \phi}\vz^{*}}^\hh$ for any $\phi\in \R$, we cannot distinguish between $\vz^*$ and $\e^{i \phi}\vz^{*}$ by intensity-only measurements  $\aabs{\va_j^\hh \vzx}$. So we define the distance between $\vz_1$ and $\vz_2$ as
\begin{equation}\label{def:dist}
	\dist{\vz_1, \vz_2} = \minm{\phi\in\R} \norm{\e^{i\phi}\vz_1-\vz_2}.
\end{equation}
For convenience,  let
\begin{equation}\label{def:ztilde:a}
		\phi(t) = \argmin{\phi\in \R} \norm{\e^{i \phi}\vz^t-\vz^*} \quad \mbox{and}\quad
		\tz^t = e^{i \phi(t)}\vz^t.
\end{equation}
We say $\vz_1$ is aligned with $\vz_2$, if
\begin{equation*}
	\norm{\vz_1-\vz_2} = \minm{\phi\in\R} \norm{\e^{i\phi}\vz_1-\vz_2}.
\end{equation*}

\subsection{Organization}
The remainder of this paper is organized as follows: Section \ref{sec:preliminaries} introduces definitions and properties of subgaussian and subexponential random variables. Section \ref{sec:pf:A} and Section \ref{sec:pf:B} present the proofs of our two main results, respectively. In Section \ref{sec:realcase}, we provide two conclusions for noisy phase retrieval from real-valued subgaussian measurements. Section \ref{sec:experiments} evaluates the empirical performance of our algorithms through a series of numerical experiments. Finally, Section \ref{sec:discussion} offers a discussion and outlook for this work. Technical lemmas and detailed proofs are provided in the supplementary materials.

%%%%%%%%%%%-------------------Section 2
\section{Preliminaries: subgaussian and subexponential random variables}\label{sec:preliminaries}
Let $X$ be a random variable. For $1\le \alpha \le 2$, define $\norms{\cdot}_{\psi_\alpha} $ as
\begin{equation}\label{def:normsg normse}
	\norms{X}_{\psi_\alpha} =\inf \dkh{t>0: \E\exp(|X/t|^\alpha)\le 2}.
\end{equation}
\begin{definition}
	For a random variable $X$, if $\norms{X}_{\psi_1} \le K_1$ for some constant $K_1>0$ then we say $X$ is a subgaussian random variable with parameter $K_1$; if $\norms{X}_{\psi_2} \le K_2$ for some constant $K_2>0$  then we say $X$ is a subexponential random variable with parameter $K_2$.
\end{definition}

It has been shown in \cite[Exercise 2.5.7]{Vershynin2018} that if $X$ is a subguassian random variable, then the tails of $X$ satisfy
\begin{equation} \label{eq:sgtail}
\PP\dkh{\abs{X}\ge t} \le 2\exp\xkh{- \frac{c_1 t^2}{\norms{X}_{\psi_2}^2 }} \quad \mbox{for all} \quad t\ge 0,
\end{equation}
and the moments of $X$ obey
\begin{equation} \label{eq:subEp}
	\xkh{\E\abs{X}^p}^{1/p} \le c_2 \norms{X}_{\psi_2} \sqrt p \quad \mbox{for all} \quad p\ge 1,
\end{equation}
where $c_1, c_2 >0$ are universal constants.  Furthermore, if $X_1, X_2$ are subgaussian random variables, then $X_1X_2$ is a sub-exponential random variable with $\norms{X_1X_2}_{\psi_1}  \le  \norms{X_1}_{\psi_2}\norms{X_2}_{\psi_2}$.

\begin{definition}
	A random vector $X \in \C^n$ is called subgaussian with parameter $K>0$  if the one-dimensional marginals $\nj{X,\vw}$ are subgaussian random variables with parameter $K>0$ for all $\vw \in \C^n$, or equivalently, $\norms{X}_{\psi_2}:= \sup_{\norm{\vw}=1} \norms{ \nj{X,\vw}}_{\psi_2} \le K$.
\end{definition}

Assume that $X=(x_1,\ldots, x_n) \in \C^n$ is a random vector with independent, zero-mean, subgaussian coordinates $x_j$.  Then $X$ is a subgaussian random vector  obeying  $\norms{X}_{\psi_2} \le C \max_{1\le j \le n} \norms{x_j}_{\psi_2}$. Here, $C>0$ is a universal constant.

%%%%%%%%%%%-------------------Section 3
\section{Proof of Theorem \ref{th:A}}\label{sec:pf:A}
In this section, we start from Assumption A and derive Theorem \ref{th:A}.
Before proceeding, we provide some intuitions for the spectral initialization given in Algorithm \ref{alg:1}.
Under Assumption A, we deduce from Lemma \ref{le:e} that
\begin{equation*}
	\E \zkh{\frac 1m \sum_{j=1}^m y_j\va_j\va_j^\hh} = \norm{\vzx}^2\mi + \vzx\vzxh + (1-\beta_2)\vzxc\vzxt - \xkh{2-\beta_1-\beta_2}\md_1(\vzx) + \frac{\vone^\T \vxi}{m}\mi
\end{equation*}
where $\md_1(\vzx)$ is defined in \eqref{def:md}. %Here, the k-th component of \(\vz\) is denoted as \(z_{k}\).
Due to the existence of $\md_1(\vzx)$, the singular values of $\E \zkh{\frac 1m \sum_{j=1}^m y_j\va_j\va_j^\hh}$ is challenging to estimate.  However, through some simple transformations, we can construct a matrix $\mm$ such that it obeys
\[\E \mm = \norm{\vzx}^2\mi + \vzx\vzxh + \frac{\vone^\T \vxi}{m}\mi.\]
Therefore, a meaningful approximation of $\vzx$ can be obtained by computing the leading eigenvector of $\mm$.

%	For specific details regarding the vanilla gradient descent and spectral initialization method with Assumption A, see Algorithm \ref{alg:1}.
%	
%	\begin{algorithm}[H]
	%		\caption{Vanilla gradient descent for phase retrieval (with Assumption A)}\label{alg:1}
	%		\begin{algorithmic}[0]
		%			\State \textbf{Input:} $\dkh{\va_j}_{1\le j\le m}$, $\dkh{y_j}_{1\le j\le m}$, $\beta_1$, $\beta_2$ and $\eta_0$.
		%			\State \textbf{Spectral initialization: } Let $\check{\vz}^0$ be the left singular vector for the leading singular value of \begin{equation*}
			%				\mm = \frac{1}{\beta_2}\mm_0+\frac{2-\beta_1-\beta_2}{\beta_1(2-\beta_2)}\mdiag\xkh{\mm_0}-\frac{2-2\beta_2}{\beta_2(2-\beta_2)}\real{\mm_0} + \frac{\beta_1-1}{\beta_1} \my,
			%			\end{equation*}
		%			where \begin{equation*}
			%				\mm_0 = \frac{1}{m}\sum_{j=1}^{m} y_j\va_j\va_j^\hh,\quad \my = \xkh{\frac{1}{m}\sum_{j=1}^{m} y_j}\mi.
			%			\end{equation*}
		%			Set $\vz^0 = \sqrt{\sum y_j/m}\check{\vz}^0$, and $\eta = \eta_0 m/\sum y_j$.
		%			\State \textbf{Gradient updates: for} $t = 0,1,2\cdots,T-1$ \textbf{do}\begin{equation*}
			%				\vztt = \vzt - \eta \nabla_{\vz} f(\vzt).
			%			\end{equation*}
		%		\end{algorithmic}
	%	\end{algorithm}

%%%%%%%%%%%-------------------Section 5
% \subsection{Analysis for Assumption A}\label{sec:pf of A}
%	

Throughout this section, without loss of generality, we assume that $\norm{\vzx}=1$. Recall the minimization program we consider is
\[
\minm{\vz\in\C^n}\quad f(\vz) = \frac{1}{2m} \sum_{j=1}^{m}\xkh{\abs{\nj{\va_j,\vz}}^2-y_j}^2.
\]
Given that  $f$ is a real-valued complex function,  we proceed to consider its Wirtinger gradient, which is expressed as:
\begin{eqnarray}
	\label{def:df}&&\nabla f_\vz(\vz) = \frac1m \sum\limits_{j=1}^m \xkh{\aabs{\va_j^\hh \vz}^2-y_j}\va_j\va_j^\hh \vz
\end{eqnarray}
Similarly,  the Hessian matrix is
%We can also compute the Wirtinger Hessian of $f(\vz)$ as
\begin{equation}\label{def:Hessian}
	\nabla^2 f(\vz)=
	\begin{bmatrix}
		\frac 1m\sum\limits_{j=1}^m \xkh{2\abs{\va_j^\hh \vz}^2-y_j}\va_j\va_j^\hh & \frac 1m\sum\limits_{j=1}^m \xkh{\va_j^\hh \vz}^2\va_j\va_j^\top \\
		\frac 1m\sum\limits_{j=1}^m \xkh{\vz^\hh \va_j}^2\bar{\va}_j\va_j^\hh &\frac 1m\sum\limits_{j=1}^m \xkh{2\abs{\va_j^\hh \vz}^2-y_j} \bar{\va}_j\va_j^\top
	\end{bmatrix}.
\end{equation}
For a more detailed exploration of the Wirtinger calculus, please refer to \cite{Candes2015WF,Wirtinger1,SunJu2018}.

%---------------------Lemma
%%%  $c > 0$ be a sufficiently small constant and
\subsection{Error Contraction}
%	\subsection{Characterizing Local Geometry in the RIC}
To show the Wirtinger flow method given in Algorithm \ref{alg:1} possesses the linear rate of convergence, we first demonstrate that the loss function $f$ has restricted strong convexity and smoothness condition in some region of incoherence and contraction (RIC), as stated below.
%Here, we emphasize once again the assumption that $\norm{\vzx} = 1$.
\begin{lemma}\label{le:ric:a}
Denote $\betat=\min \dkh{\beta_1,\beta_2}$.	For any fixed $\vzx \in \C^n$ with $\norm{\vzx} = 1$, suppose that Assumption A holds true and the noise $\vxi$ satisfies
	\begin{equation}\label{cond:noise:a}
		\abs{\vone^\T \vxi}\le \tilde{C}_{1} m,\ \norm{\vxi}\lesssim \sqrtm ,\ \mbox{and}\ \norminf{\vxi}\lesssim \log m
	\end{equation}
	for some sufficiently small constant $\tilde{C}_{1}>0$.
	Assume that  $m\ge C_0 \betat^{-2} K^8 n \log^3 m$ for some absolute large constant $C_0 > 0$. Then with probability at least $1-m\exp (-c_0 n)-O(m^{-10})$,
	 \begin{subequations}\label{a:ric}
	 	\begin{align}
	 		\label{a:ric:1}\norm{\nabla^2 f(\vz)}&\le 12+3\beta_1,\\
	 		\label{a:ric:2}\quad \vu^\hh \nabla^2 f(\vz) \vu&\ge \frac{\betat}{4}\norm{\vu}^2
	 	\end{align}
	 \end{subequations}
	holds for all  $\vz \in \C^n$ obeying
	\begin{eqnarray}
		\label{cond1:ric:a}\norm{\vz-\vz^*}&&\le \delta_1;\\
		\label{cond2:ric:a}\max\limits_{1\le j\le m} \aabs{\va_j^\hh \xkh{\vz-\vz^*}}&&\le \tilde{C}_{3} K \sqrt{\log m};
	\end{eqnarray}
	and for all $\vu=\begin{bmatrix}
			\vz_1-\vz_2\\
			\overline{\vz_1-\vz_2}
		\end{bmatrix}  \in \C^{2n}$ obeying
		\[
		\max\dkh{\norm{\vz_1-\vz^*},\norm{\vz_2-\vz^*}}\le \delta_1
		\]	
and $\vz_1$ is aligned with $\vz_2$.
	Here, $\tilde{C}_{3}, c_0>0$ are universal constants, and $\delta_1>0$ is some sufficiently small constant.
\end{lemma}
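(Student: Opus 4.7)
The plan is to follow the classical two-step strategy for establishing restricted strong convexity and smoothness (as in the Gaussian analysis of Wirtinger flow): first control the population-level Hessian, then show uniform concentration of the empirical Hessian around its expectation on the RIC set. Before anything else I would split $\nabla^2 f(\vz)$ into a signal part $\nabla^2 f_0(\vz)$ (obtained by replacing $y_j$ with $\aabs{\va_j^\hh \vzx}^2$) and a noise part of the form $-\tfrac{1}{m}\sum_j \xi_j \, \text{blkdiag}(\va_j\va_j^\hh, \bar{\va}_j\va_j^\top)$, and handle the two pieces separately.

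For the population-level bound, I would compute $\E[\nabla^2 f_0(\vzx)]$ block by block using the same fourth-moment identities that produced Lemma \ref{le:e}. The resulting expectation has the familiar structure $\|\vzx\|^2 \mi + $ rank-one terms plus a diagonal correction weighted by $\beta_1$ and an entry-wise conjugate term weighted by $\beta_2$. For the structured test vector $\vu=[\vz_1-\vz_2;\ \overline{\vz_1-\vz_2}]$ with $\vz_1$ aligned to $\vz_2$, the first-order condition for alignment forces $\mathrm{Im}\langle \vz_1, \vz_2\rangle = 0$, which means $\vu$ is orthogonal to the Hessian's phase-ambiguity null direction $[\i\vzx;-\i\bar{\vzx}]$. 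Exploiting this orthogonality, I would show $\vu^\hh \E[\nabla^2 f_0(\vzx)]\vu \ge \tfrac{\widetilde{\beta}}{2}\|\vu\|^2$, with $\beta_1$ supplying the curvature against real phase perturbations and $\beta_2$ supplying the curvature against conjugation perturbations. A Lipschitz perturbation argument, using only $\|\vz-\vzx\|\le \delta_1$, transfers this lower bound from $\vzx$ to all $\vz$ in the RIC. The upper bound \eqref{a:ric:1} follows from crude triangle inequality plus the fourth-moment estimate $\E\aabs{a_1}^4=1+\beta_1$.

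For the concentration step, I would build a $\delta$-net on the relevant subset of $\C^n$ and union-bound the matrix Bernstein inequality applied to $\{(|\va_j^\hh\vz|^2-\aabs{\va_j^\hh\vzx}^2)\va_j\va_j^\hh\}_{j=1}^m$ and its conjugate-bilinear analogue. Under Assumption A each $|\va_j^\hh\vz|^2\va_j\va_j^\hh$ is sub-exponential with parameter $K$, and the incoherence condition \eqref{cond2:ric:a} provides the essential truncation so that the $\psi_1$-norm is controlled by $K\sqrtlm$ instead of a polynomial in $n$. Coupled with $\|\vz-\vzx\|\le \delta_1$ for the variance bound, this gives an operator-norm deviation of order $K^4\sqrt{n\log^3 m/m}$, which is $\ll \widetilde{\beta}/8$ once $m\ge C_0 \widetilde{\beta}^{-2}K^8 n\log^3 m$. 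The $\log^3 m$ factor is the combined cost of truncation and the union bound over the net, and it matches the announced sample complexity. The noise piece is then bounded by weighted matrix concentration: from $\norminf{\vxi}\lesssim \log m$ and $\norm{\vxi}\lesssim \sqrtm$ one gets $\|\tfrac{1}{m}\sum_j \xi_j \va_j\va_j^\hh\|\lesssim K^2\sqrt{n\log^3 m/m}+\tilde{C}_1$, and the small constant $\tilde{C}_1$ can be chosen to absorb the residual into $\widetilde{\beta}/8$.

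I expect the main obstacle to be the lower bound itself, not the concentration: in the Gaussian setting $\beta_1=\beta_2=2$ gives a constant curvature for free, but here we need curvature that scales linearly with the small parameter $\widetilde{\beta}=\min\{\beta_1,\beta_2\}$, so the cancellations allowed by the aligned structure of $\vu$ must be tracked precisely through all four blocks of $\E[\nabla^2 f_0(\vzx)]$. The secondary difficulty is carrying out uniform concentration under merely sub-Gaussian (not rotationally invariant) measurements: the entrywise conjugate terms in the expectation do not cancel, and the argument must dovetail with the structure of $\md_1$ and $\real(\cdot)$ used in the definition of the spectral matrix $\mm$ in Algorithm \ref{alg:1}.
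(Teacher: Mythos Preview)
Your plan is essentially the paper's proof: split $\nabla^2 f(\vz)$ into the population Hessian $\nabla^2 F(\vzx)=\E[\nabla^2 f(\vzx)]$ plus a deviation, establish smoothness and restricted strong convexity for $\nabla^2 F(\vzx)$ (their Lemma \ref{le:subric1:a}), and bound $\|\nabla^2 f(\vz)-\nabla^2 F(\vzx)\|$ uniformly over the RIC by truncated Bernstein plus a net (their Lemmas \ref{le:subric2:a}, \ref{le:pre:uniform:bound}, \ref{le:noise}), exactly as you sketch.

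One point to sharpen: the alignment condition makes $\langle \vz_1,\vz_2\rangle$ real, so $\vw^\hh\vz_1$ is real, but this does \emph{not} make $\vu$ exactly orthogonal to $[\i\vzx;-\i\bar\vzx]$---only approximately so. The paper handles this by writing $(\vw^\hh\vzx)^2=|\vw^\hh\vz_1|^2+\tau_1$ with $|\tau_1|\lesssim \|\vz_1-\vzx\|\,\|\vw\|^2$ (their Lemma \ref{le:align} and inequality \eqref{ineq:tau1}) and absorbing $\tau_1$ into the $\delta_1$-slack. The paper also organizes the lower-bound computation via a case split on whether $\sum_i |z_i^*|^2|w_i|^2\ge \|\vw\|^2/2$, giving two different lower bounds for the diagonal block (claims \eqref{j1:a}, \eqref{j1:b}) so that the coefficient $-(2-\beta_1-\beta_2)$ in front of $\md_1(\vzx)$ can be handled regardless of its sign; expect to need a similar dichotomy when you write out the four-block computation.
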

%---------------------Proof of Lemma
\begin{proof}
	See Section \ref{pf:ric:a}.
\end{proof}

%---------------------Lemma
The condition \eqref{a:ric} is referred to as the region of incoherence and contraction in \cite{Macong2020}, where \eqref{a:ric:1} represents the smoothness property and \eqref{a:ric:2} represents the restricted strong convexity.
With Lemma \ref{le:ric:a} in place, we next show that if the iterate $\vzt$ resides within the RIC, the subsequent iterate $\vztt$ exhibits a contracted distance from the true signal $\vzx$.

\begin{lemma} \label{le:error:a}
Suppose that $m\ge C_0\betat^{-1}\beta_2^{-1}K^8 n \log^3 m$ for some sufficiently large constant $C_0>0$, and the noise vector $\vxi$ satisfies \eqref{cond:noise:a}.  There exists an event that does not depend on $t$ and has probability at least $1-m\exp (-c_0 n)-O(m^{-20})$, such that when it happens and
\begin{equation} \label{eq:le32}
\dist{\vzt,\vzx} \le \delta_1 , \qquad  \qquad \max\limits_{1\le j\le m} \aabs{\va_j^\hh \xkh{\vztu-\vz^*}} \le \tilde{C}_{3} K \sqrt{\log m}
\end{equation}
hold for some some sufficiently small constant $\delta_1>0$ and a universal constant $\tilde{C}_{3}>0$,
one has
	\begin{equation*}
		\dist{\vztt,\vzx}\le (1-\frac{\betat}{8} \eta) \dist{\vzt,\vzx}+\eta\frac{\abs{\vone^\T \vxi}+C_{9} K^2\xkh{\sqrtn\norm{\vxi} + n\norminf{\vxi}}}{m}, %,\\
		% &\quad and \quad \dist{\vztt,\vzx}\le C_{3}+ C_{4} \frac{\abs{\vone^\T \vxi}}{m} + C_{5} K^2\frac{\norm{\vxi}\cdot \sqrt{n} + \norminf{\vxi}\cdot n}{m}.
	\end{equation*}
	provided the step size satisfies $0< \eta <\frac{\betat}{36(4+\beta_1)^{2}}$ where $C_{9}>0$ is a sufficiently large constant. %, and $C_{3}>0$ is some sufficiently small constant, and some sufficiently large constants $C_{5},C_{7}>0$, and some universal constant $C_{4}>0$
	Here, $\vztu$ is defined in \eqref{def:ztilde:a}, and $\beta_1,\beta_2>0$ are parameters in Assumption A.
\end{lemma}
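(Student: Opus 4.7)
The plan is to use the phase-equivariance of the Wirtinger gradient to work in the aligned frame, and then split the one-step update into a noiseless contractive part (controlled by the restricted convexity/smoothness in Lemma \ref{le:ric:a}) and a residual ``noise gradient at the truth'' which produces the additive error. Since $f(e^{i\phi}\vz)=f(\vz)$, a direct differentiation shows $\nabla_\vz f(e^{i\phi}\vz)=e^{i\phi}\nabla_\vz f(\vz)$, so multiplying the WF update by $e^{i\phi(t)}$ gives
\[
e^{i\phi(t)}\vztt=\vztu-\eta\,\nabla f_\vz(\vztu),\qquad \dist{\vztt,\vzx}\le\norm{\vztu-\eta\,\nabla f_\vz(\vztu)-\vzx}.
\]
From $y_j=\aabs{\va_j^\hh\vzx}^2+\xi_j$ we get $\nabla f_\vz(\vzx)=-\tfrac1m\sum_j\xi_j\va_j\va_j^\hh\vzx$, so
\[
\vztu-\eta\,\nabla f_\vz(\vztu)-\vzx=\underbrace{(\vztu-\vzx)-\eta\bigl(\nabla f_\vz(\vztu)-\nabla f_\vz(\vzx)\bigr)}_{\textup{(I)}}+\underbrace{\tfrac{\eta}{m}\sum_j\xi_j\va_j\va_j^\hh\vzx}_{\textup{(II)}}.
\]

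For term (I) I would invoke the Wirtinger fundamental theorem of calculus: with $\vu=\bigl[\vztu-\vzx;\overline{\vztu-\vzx}\bigr]$,
\[
\begin{bmatrix}\nabla f_\vz(\vztu)-\nabla f_\vz(\vzx)\\ \overline{\nabla f_\vz(\vztu)-\nabla f_\vz(\vzx)}\end{bmatrix}=\int_0^1\nabla^2f\bigl(\vzx+\tau(\vztu-\vzx)\bigr)\,d\tau\cdot\vu.
\]
Because $\vztu$ is aligned with $\vzx$ and the hypotheses \eqref{eq:le32} propagate linearly along the segment (both $\norm{\cdot-\vzx}$ and $\max_j|\va_j^\hh(\cdot-\vzx)|$ scale by $\tau$), every midpoint lies in the RIC of Lemma \ref{le:ric:a}. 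Hence $\norm{\nabla f_\vz(\vztu)-\nabla f_\vz(\vzx)}\le L\,\norm{\vztu-\vzx}$ with $L=12+3\beta_1=3(4+\beta_1)$, and
\[
2\real{(\vztu-\vzx)^\hh(\nabla f_\vz(\vztu)-\nabla f_\vz(\vzx))}=\vu^\hh\!\!\int_0^1\!\!\nabla^2f\,d\tau\,\vu\ge\tfrac{\betat}{4}\norm{\vu}^2=\tfrac{\betat}{2}\norm{\vztu-\vzx}^2.
\]
Expanding $\norm{\textup{(I)}}^2$ then gives $\norm{\textup{(I)}}^2\le(1-\tfrac{\betat}{2}\eta+L^2\eta^2)\norm{\vztu-\vzx}^2$; under the stated step-size restriction $\eta\le\betat/[36(4+\beta_1)^2]=\betat/(4L^2)$ the quadratic term is absorbed, yielding $\norm{\textup{(I)}}\le(1-\tfrac{\betat}{8}\eta)\dist{\vzt,\vzx}$.

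For term (II) I would exploit $\E[\xi_j\va_j\va_j^\hh]=\xi_j\mi$ (using Assumption A with $\E|a_{1k}|^2=1$ and independence of coordinates of $\va$) to split
\[
\tfrac1m\textstyle\sum_j\xi_j\va_j\va_j^\hh=\tfrac{\vone^\T\vxi}{m}\mi+\tfrac1m\sum_j\xi_j(\va_j\va_j^\hh-\mi).
\]
The deterministic piece contributes $\tfrac{|\vone^\T\vxi|}{m}\norm{\vzx}=\tfrac{|\vone^\T\vxi|}{m}$. For the random centered piece I would apply a truncated matrix Bernstein inequality to the sum $\tfrac1m\sum_j\xi_j(\va_j\va_j^\hh-\mi)$ with deterministic coefficients $\xi_j$ and subexponential summands having parameter $\lesssim K^2|\xi_j|$; the variance parameter scales with $\tfrac1{m^2}\sum_j\xi_j^2\lesssim\norm{\vxi}^2/m^2$, and the truncation parameter with $\tfrac1m\norminf{\vxi}\log m$, delivering $\lesssim K^2(\sqrt{n}\,\norm{\vxi}+n\,\norminf{\vxi})/m$ on an event of probability at least $1-O(m^{-20})$. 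Assembling (I) and (II) via the triangle inequality yields the stated bound; note that since the high-probability events for the concentration and for the RIC do not depend on the iterate index $t$, the statement holds simultaneously in $t$ as claimed.

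The main technical obstacle is the sharp operator-norm concentration of $\tfrac1m\sum_j\xi_j(\va_j\va_j^\hh-\mi)$: one needs the $\sqrt{n}\norm{\vxi}/m$ and $n\norminf{\vxi}/m$ terms to appear with the correct dimension exponents and with probability $1-O(m^{-20})$ in order to support the inductive leave-one-out scheme. Because the summands $\xi_j(\va_j\va_j^\hh-\mi)$ are only subexponential (products of two subgaussians), a direct matrix Bernstein gives a sub-optimal tail; the fix is to truncate $\va_j$ at level $\sim K\sqrt{\log m}$ using Assumption A, control the truncation error deterministically via $\norminf{\vxi}$, and apply Bernstein to the bounded part. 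The remaining steps (phase alignment, Hessian integration, and algebraic manipulations) are essentially calculus once Lemma \ref{le:ric:a} is in hand.
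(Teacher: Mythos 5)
Your proposal follows the same structure as the paper's proof: align phases via the equivariance $\nabla f(e^{i\phi}\vz)=e^{i\phi}\nabla f(\vz)$, split the one-step update into the noiseless discrepancy $(\vztu-\vzx)-\eta(\nabla f(\vztu)-\nabla f(\vzx))$ and the residual $\eta\nabla f(\vzx)$, integrate the Wirtinger Hessian along the segment and invoke the RIC bounds of Lemma~\ref{le:ric:a} (noting the linear scaling of the incoherence conditions in $\tau$ and the alignment of $\vztu$ with $\vzx$), then bound $\norm{\nabla f(\vzx)}=\tfrac1m\norms{\sum_j\xi_j\va_j\va_j^\hh\vzx}$ by a concentration estimate. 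The only genuine differences are cosmetic or local: you carry out the contraction in $\C^n$ using $2\real{\vh^\hh(\nabla f(\vztu)-\nabla f(\vzx))}=\vu^\hh(\int\nabla^2f)\vu$ rather than the paper's stacked $\C^{2n}$ norm (with the attendant $\sqrt2$ bookkeeping), which is algebraically equivalent; and for the noise term you sketch a truncated matrix Bernstein argument, while the paper's Lemma~\ref{le:noise} uses scalar Bernstein on $\sum_j\xi_j(|\va_j^\hh\vx|^2-1)$ over a $1/4$-net of the sphere (no truncation needed, since the mixed sub-Gaussian/sub-exponential tail already yields the $\sqrt{n}\norm{\vxi}+n\norminf{\vxi}$ bound on an event of probability $1-2\exp(-c_0n)$); both routes deliver the same final bound, and your probability attribution $1-O(m^{-20})$ for the noise piece is slightly weaker but harmless since the stated guarantee already absorbs $O(m^{-20})$ from the RIC event.
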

%---------------------Proof of Lemma
\begin{proof}
	See Section \ref{pf:error:a}.
\end{proof}

From Lemma \ref{le:error:a}, one sees that  if the condition \eqref{eq:le32} holds for the previous $t$ iterations, then
\begin{eqnarray}
	\dist{\vztt,\vzx} &\le & (1-\frac{\betat}{8} \eta) \dist{\vzt,\vzx}+\eta\frac{\abs{\vone^\T \vxi}+C_{9} K^2\xkh{\sqrtn\norm{\vxi} + n\norminf{\vxi}}}{m} \notag \\
	%	& \le & (1-\frac{\betat}{16} \eta)^{t+1} \dist{\vz^0,\vzx} \\
	%		& & + \frac{1}{\frac{\betat}{16}} \frac{\abs{\vone^\T \vxi}+C_{9} K^2\xkh{\sqrt{n}\norm{\vxi} + n\norminf{\vxi}}}{m}\\
	& \le& (1-\frac{\betat}{8} \eta)^{t+1} \dist{\vz^0,\vzx} + \frac{8\abs{\vone^\T \vxi} + 8 C_9 K^2\xkh{\sqrt{n}\norm{\vxi} + n\norminf{\vxi}}}{m\betat}, \label{eq:con17}
\end{eqnarray}
which implies the conclusion of Theorem \ref{th:A}.

It remains to demonstrate the validity of the condition \eqref{eq:le32} for all $0\le t\le t_0\le m^{10}$. Our proof will be inductive in nature.  Specifically, we prove that if  the condition \eqref{eq:le32} is satisfied for the iteration $\vzt$,  it will continue to hold for the subsequent iteration $\vztt$ with high probability. This in turn concludes the proof of Theorem \ref{th:A}, as long as the condition \eqref{eq:le32} is valid for the base case $\vz^0$.

For the first part of the condition \eqref{eq:le32},   it follows from \eqref{eq:con17} that if $ \dist{\vz^0,\vzx} \le \delta$ for a positive constant $\delta < \delta_1/2$, we have
\begin{equation}\label{cond:result:1}
	\dist{\vztt,\vzx}  \le \delta + \frac{8\abs{\vone^\T \vxi} + 8 C_9 K^2\xkh{\sqrt{n}\norm{\vxi} + n\norminf{\vxi}}}{m\betat} \le \delta_1,
\end{equation}
provided $m\gtrsim K^2\betat^{-1} n\log^3 m $ and $\vxi$ obeying the condition \eqref{cond:noise:a}.
However, verifying the second part of condition \eqref{eq:le32} is  challenging  due to the statistical dependence between $\vzt$ and the sampling vectors $\dkh{\va_j}$. To address this, we employ the leave-one-out technique.

\subsection{Leave-one-out Sequences for Auxiliary Purposes}
In this section, we aim to show that the second part of condition \eqref{eq:le32} holds true with high probability for all $0\le t\le t_0\le m^{10}$.  To decouple the  statistical dependence between \(\{\vzt\}_{t \ge 0}\) and the sampling vectors $\dkh{\va_j}$, we introduce an auxiliary sequence \(\{\vztl\}_{t \ge 0}\) for each $1\le l \le m$, which excludes the \(l\)-th  measurement from consideration.  Specifically,  for each $1\le l \le m$, we employ the following leave-one-out loss function
\begin{equation}\label{loss:leave:a}
	f^{(l)} (\vz) = \frac{1}{2m} \sum_{j\neq l} \xkh{\abs{\nj{\va_j,\vz}}^2 - y_j}^2,
\end{equation}
and the sequence \(\{\vztl\}_{t \ge 0}\) is constructed by running Algorithm \ref{alg:a:leave one out} with respect to $f^{(l)} (\vz)$.
\begin{algorithm}[H]
	\caption{The $l$-th iterative sequence for Assumption A}\label{alg:a:leave one out}
	\begin{algorithmic}[0]
		\State \textbf{Input:} $\dkh{\va_j}_{1\le j\le m,j\ne l}$, $\dkh{y_j}_{1\le j\le m,j\ne l}$, $\beta_1$, $\beta_2$ and $\eta_0$.
		\State \textbf{Spectral initialization: } Let $\check{\vz}^{0,\lkh}$ be the leading singular vector  of
		\begin{equation*}
			\mm^\lkh = \frac{1}{\beta_2}\mm_0^\lkh+\frac{2-\beta_1-\beta_2}{\beta_1(2-\beta_2)}\mdiag\xkh{\mm_0^\lkh}-\frac{2-2\beta_2}{\beta_2(2-\beta_2)}\real{\mm_0^\lkh} + \frac{\beta_1-1}{\beta_1} \gl\mi
		\end{equation*}
		where
		\[
		\mm_0^\lkh = \frac{1}{m}\sum_{j=1,j\ne l}^{m} y_j\va_j\va_j^\hh \qquad \mbox{and} \qquad  \gl = \frac{1}{m}\sum_{j=1,j\ne l}^{m} y_j.
		\]
		Set $\vz^{0,\lkh} = \sqrt{\gl}\check{\vz}^{0,\lkh}$.
		\State \textbf{Gradient updates: for} $t = 0,1,2\cdots,T-1$ \textbf{do}
		\begin{equation} \label{eq:uprulel}
			\vzttl = \vztl - \eta \nabla_{\vz} f^{(l)} (\vztl),
		\end{equation}
		where $\eta = \eta_0 /\gl$.
	\end{algorithmic}
\end{algorithm}

%\subsection{Approximate Independence}
Next, we will demonstrate the incoherence condition  \eqref{eq:le32}  by using the following inductive hypotheses:
\begin{subequations}\label{induct:a}
	\begin{align}
		\label{induct:err:a}\dist{\vzt,\vzx}&\le C_{3}+ C_{4} \frac{\abs{\vone^\T \vxi}}{m\betat} + C_{5} K^2\frac{\sqrt{n}\norm{\vxi} + n\norminf{\vxi}}{m\betat},\\
		\label{induct:ztl:a}\max_{1\le l\le m}\dist{\vztl,\vztu}&\le C_{6} \frac{K^7\sqrtnlmmm}{m\beta_2}\xkh{1+\frac{\abs{\vone^\T \vxi}+\sqrtn \norm{\vxi}}{m} + \frac{\norminf{\vxi}}{K^4 \log m}},\\
		\label{induct:ztlu:a}\max_{1\le l\le m}\norm{\vztlu-\vztu}&\lesssim C_{6} \frac{K^7\sqrtnlmmm}{m\beta_2}\xkh{1+\frac{\abs{\vone^\T \vxi}+\sqrtn \norm{\vxi}}{m} + \frac{\norminf{\vxi}}{K^4 \log m}},\\
		\label{induct:inh:a}\max_{1\le l\le m}\abs{\va_l^\hh \xkh{\vztu-\vzx}}&\le C_{7}K\sqrtlm \xkh{1+\frac{\abs{\vone^\T \vxi}+ K^2\sqrtn\norm{\vxi} + K^4 n\norminf{\vxi}}{m\betat}}.
	\end{align}
\end{subequations}
Here, $\vztu$ is defined in \eqref{def:ztilde:a}, and $C_{4}>0$ is a universal constant,  $C_{3}>0$ is a sufficiently small constant, $C_{5},C_{6},C_{7}>0$ are sufficiently large constants.  We next  demonstrate that when \eqref{induct:a} holds true up to  the $t$-th iteration,  the inductive hypotheses \eqref{induct:a} still holds for the $(t+1)$-th iteration.  It is easy to see that \eqref{induct:err:a} is a direct consequence of \eqref{cond:result:1}. Subsequently, \eqref{induct:ztl:a} and \eqref{induct:ztlu:a} are justified by the following lemma.
%---------------------Lemma
\begin{lemma} \label{le:ztl:a}
	Suppose that $m\ge C_0\betat^{-1}\beta_2^{-1}K^8 n \log^3 m$ for some sufficiently large constant $C_0>0$, the noise vector $\vxi$ satisfies \eqref{cond:noise:a}, and the step size $\eta>0$ is some sufficiently small constant. Let $\mathcal{E}_t$ be the event where \eqref{induct:err:a}-\eqref{induct:inh:a} hold for the $t$-th iteration. Then there exist an event $\mathcal{E}_{t+1} \subseteq \mathcal{E}_t$ obeying $\PP\xkh{\mathcal{E}_t \cap \mathcal{E}_{t+1}^C}\le m\exp (-c_0 n) + O(m^{-20})$, one has
	\begin{equation} \label{eq:le331}
		\max\limits_{1\le l\le m}\dist{\vzttl,\vzttu}\le C_{6} \frac{K^7\sqrtnlmmm}{m\beta_2}\xkh{1+\frac{\abs{\vone^\T \vxi}+\sqrtn \norm{\vxi}}{m} + \frac{\norminf{\vxi}}{K^4\log m}},
	\end{equation}
	and
	\begin{equation}  \label{eq:le332}
		\max\limits_{1\le l\le m}\norm{\vzttlu-\vzttu}\lesssim C_{6} \frac{K^7\sqrtnlmmm}{m\beta_2}\xkh{1+\frac{\abs{\vone^\T \vxi}+\sqrtn \norm{\vxi}}{m} + \frac{\norminf{\vxi}}{K^4\log m}}.
	\end{equation}
\end{lemma}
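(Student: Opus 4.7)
The plan is to control $\norm{\vzttlu - \vzttu}$ directly via the one-step update rules together with the restricted strong convexity and smoothness of Lemma \ref{le:ric:a}, and to deduce the bound \eqref{eq:le331} on $\dist{\vzttl,\vzttu}$ from \eqref{eq:le332} since $\dist{\vzttl,\vzttu}\le \norm{\vzttlu-\vzttu}$ by definition. First, I would set up the phase alignment. Let $\phi_t\in\R$ be chosen so that $\vztlu = e^{i\phi_t}\vztl$ is aligned with $\vztu$. Because the optimal alignment at time $t+1$ only sharpens the bound, it suffices to estimate $\norm{e^{i\phi_t}\vzttl - \vzttu}$. Using the equivariance of the Wirtinger gradient, $e^{i\phi_t}\nabla f^{(l)}(\vztl) = \nabla f^{(l)}(\vztlu)$, and \eqref{eq:uprulel} together with the update for $\vzttu$, we obtain
\[
e^{i\phi_t}\vzttl - \vzttu = (\vztlu - \vztu) - \eta\bigl[\nabla f^{(l)}(\vztlu) - \nabla f(\vztu)\bigr].
\]

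Next, I would split the gradient difference by adding and subtracting $\nabla f^{(l)}(\vztu)$:
\[
\nabla f^{(l)}(\vztlu) - \nabla f(\vztu) = \bigl[\nabla f^{(l)}(\vztlu) - \nabla f^{(l)}(\vztu)\bigr] - \tfrac{1}{m}\bigl(\aabs{\va_l^\hh \vztu}^2 - y_l\bigr)\va_l\va_l^\hh \vztu.
\]
For the first bracket, apply the fundamental theorem of calculus in the Wirtinger sense, converting it into an integrated Hessian acting on $\vztlu - \vztu$. Since $f^{(l)}$ differs from $f$ in only one summand, essentially the same RIC as in Lemma \ref{le:ric:a} applies to $f^{(l)}$; the preconditions \eqref{cond1:ric:a}--\eqref{cond2:ric:a} are guaranteed by the inductive hypotheses \eqref{induct:err:a}, \eqref{induct:ztlu:a}, and \eqref{induct:inh:a} (invoked in the aligned basis via the triangle inequality). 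This yields the contraction
\[
\bigl\|(\vztlu - \vztu) - \eta[\nabla f^{(l)}(\vztlu) - \nabla f^{(l)}(\vztu)]\bigr\| \le \bigl(1 - \tfrac{\betat \eta}{8}\bigr)\norm{\vztlu - \vztu},
\]
which, by \eqref{induct:ztlu:a}, is already of the desired order.

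For the remaining, excluded-measurement term, the key point is that $\vztlu$ is independent of $\va_l$. Writing $\va_l^\hh \vztu = \va_l^\hh \vztlu + \va_l^\hh(\vztu - \vztlu)$, standard subgaussian concentration gives $\aabs{\va_l^\hh \vztlu} \lesssim K\sqrtlm\,\norm{\vztlu}$ and $\aabs{\va_l^\hh \vzx} \lesssim K\sqrtlm$, while $\aabs{\va_l^\hh(\vztu - \vztlu)} \le \norm{\va_l}\cdot\norm{\vztu - \vztlu} \lesssim K\sqrtn\cdot\norm{\vztu - \vztlu}$ is small by \eqref{induct:ztlu:a}. Together with $\norm{\va_l\va_l^\hh \vztu} \lesssim K^2\sqrtn$ and $y_l = \aabs{\va_l^\hh \vzx}^2 + \xi_l$, these bound the extra term by something of order $K^3\sqrtn\,\log m/m$ plus a noise contribution $K\sqrtn\,\norminf{\vxi}/m$. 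Combining with the contraction gives \eqref{eq:le332}.

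The main obstacle is tracking constants so that the factor $C_{6}K^7\sqrtnlmmm/(m\beta_2)$ in \eqref{induct:ztlu:a} is exactly reproduced after one step, which is delicate because of the $K^7$ scaling and the intertwining of the four inductive hypotheses through the alignment and the RIC preconditions. A secondary technical point is that all the required subgaussian concentration events (for $\aabs{\va_l^\hh\vztlu}$, $\aabs{\va_l^\hh \vzx}$, $\norm{\va_l}$, and the Hessian bounds underlying the RIC for $f^{(l)}$) must hold simultaneously for every $1\le l\le m$; a union bound then supplies the failure probability $m\exp(-c_0 n) + O(m^{-20})$ and yields the nesting $\mathcal{E}_{t+1}\subseteq\mathcal{E}_t$ asserted in the lemma.
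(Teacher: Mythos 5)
Your decomposition and the independence argument for the excluded $l$-th term match the paper's proof in spirit, but there are two genuine gaps.

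First, your one-step expansion is not quite right. You write the update as
\begin{equation*}
  e^{i\phi_t}\vzttl - \vzttu = (\vztlh-\vztu)-\eta\bigl[\nabla f^{(l)}(\vztlh)-\nabla f(\vztu)\bigr],
\end{equation*}
but $\vzttu = e^{i\phi(t+1)}\vztt$, whereas the gradient step applied to $\vztu$ yields $e^{i\phi(t)}\vztt$, and in general $\phi(t+1)\neq\phi(t)$. The paper handles this by first showing that
\begin{equation*}
  \dist{\vzttl,\vzttu}=\min_{\phi}\norm{e^{i\phi}\vzttl-e^{i\phi(t+1)}\vztt}\le \norm{e^{i\phih(t)}\vzttl-e^{i\phi(t)}\vztt}
\end{equation*}
via a unit-modulus factoring trick, so that the subsequent algebra involves $e^{i\phi(t)}\vztt = \vztu-\eta\nabla f(\vztu)$ rather than $\vzttu$. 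Without this step, your identity does not hold.

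Second, your plan derives \eqref{eq:le332} first and then deduces \eqref{eq:le331} by $\dist{\vzttl,\vzttu}\le\norm{\vzttlu-\vzttu}$. This direction is easy, but the hard direction is the one you actually need. The one-step estimate naturally controls $\norm{e^{i\phih(t)}\vzttl-e^{i\phi(t)}\vztt}$, i.e.\ $\dist{\vzttl,\vzttu}$ in \eqref{eq:le331}, \emph{not} $\norm{\vzttlu-\vzttu}$ in \eqref{eq:le332}: the vector $\vzttlu$ carries its own alignment to $\vzx$ at time $t+1$, which is unrelated to the phase $\phih(t)$ used in the contraction. To pass from \eqref{eq:le331} to \eqref{eq:le332} the paper invokes Lemma~\ref{le:noalign} (both $\vzttlh$ and $\vzttu$ lie within $1/4$ of $\vzx$, so the discrepancy between two alignments-to-$\vzx$ is comparable to the raw $\ell_2$ distance). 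You omit this step entirely, and your stated logical order is backwards.

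A more minor point: you apply the RIC (Lemma~\ref{le:ric:a}) to the leave-one-out loss $f^{(l)}$, asserting it holds ``essentially'' by the same argument. The paper sidesteps the need for a separate statement by keeping $\nabla f$ in the contraction bracket on both arguments and isolating the full $l$-th summand of $\nabla f$ evaluated at $\vztlh$ as the residual term $I_2$; this is cleaner and reuses Lemma~\ref{le:ric:a} verbatim. Your route would require a (routine but unstated) RIC for $f^{(l)}$.
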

%---------------------Proof of Lemma
\begin{proof}
	See Section \ref{pf:ztl:a}.
\end{proof}

With Lemma \ref{le:ztl:a} in place,  the inductive hypotheses \eqref{induct:inh:a} can be readily verified. Specifically,  it follows from Lemma \ref{le:order:statistics} that with probability at least $1-O(m^{-20})$,  one has
 \begin{align}\label{ineq:pf:inh:a}
		\maxm{1\le l\le m} \abs{\va_l^\hh\xkh{\vzttlu-\vzx}}&\le \maxm{1\le l\le m} c_2 K\sqrtlm \norm{\vzttlu-\vzx}\notag\\
		&\le \maxm{1\le l\le m} c_2 K\sqrtlm \xkh{\norm{\vzttlu-\vzttu} + \norm{\vzttu-\vzx}}
\end{align}
for a universal constant $c_2>0$.
Then with probability at least $1-m\exp(-c_0 n) - O(m^{-20})$,  we obtain
\begin{eqnarray}
	&&\maxm{1\le l\le m} \abs{\va_l^\hh\xkh{\vzttu-\vzx}} \label{eq:alzttrue}\\
	& \stackrel{\mbox{(i)}}{\le}  & \maxm{1\le l\le m}  \abs{\va_l^\hh\xkh{\vzttu-\vzttlu}} + \maxm{1\le l\le m} \abs{\va_l^\hh\xkh{\vzttlu-\vzx}} \notag \\
	& \stackrel{\mbox{(ii)}}{\le}  &  \maxm{1\le l\le m} \norm{ \va_l } \norm{\vzttu-\vzttlu}+ \maxm{1\le l\le m} \abs{\va_l^\hh\xkh{\vzttlu-\vzx}} \notag \\
	&\stackrel{\mbox{(iii)}}{\le}& \maxm{1\le l\le m} \xkh{c_3 K^2\sqrtn + c_2 K\sqrtlm } \norm{\vzttu-\vzttlu} +  c_2 K\sqrtlm \norm{\vzttu-\vzx} \notag\\
	&\stackrel{\mbox{(iv)}}{\le} & 2c_3 C_{6} K^9 \frac{n\sqrt{\log^3 m}}{m\beta_2} \xkh{1+\frac{\abs{\vone^\T \vxi}+\sqrtn \norm{\vxi}}{m} + \frac{\norminf{\vxi}}{K^4 \log m}} \notag\\
	&&+  c_2 K\sqrtlm \xkh{C_{3}+ C_{4} \frac{\abs{\vone^\T \vxi}}{m\betat} + C_{5} K^2\frac{\sqrt{n}\norm{\vxi} + n\norminf{\vxi}}{m\betat}} \notag \\
	&\stackrel{\mbox{(v)}}{\le}  & C_{7}K\sqrtlm \xkh{1+\frac{\abs{\vone^\T \vxi}+ K^2\sqrtn\norm{\vxi} + K^4 n\norminf{\vxi}}{m\betat}}. \notag
\end{eqnarray}
Here, (i) comes from the triangle inequality,  (ii) arises from Cauchy-Schwarz, (iii) follows from Lemma \ref{le:order:statistics} and \eqref{ineq:pf:inh:a}.
Besides, (iv) holds because of Lemma \ref{le:ztl:a} and  the inductive hypotheses \eqref{induct:err:a}.
Lastly, (v) holds since $C_{7}>0$ is sufficiently large and $m\gtrsim K^8 n \log^3 m$ and $\betat = \min\dkh{\beta_1,\beta_2}\le 1$.

\subsection{Spectral Initialization}\label{sec:initial of A}
To complete the induction steps, we need to verify the induction hypotheses for the base cases. Specifically, we must show that the spectral initializations $\vz^0$ and $\vz^{0,\lkh}$ satisfy the induction hypothesis \eqref{induct:a} at $t=0$.
First, we demonstrate that the spectral initial point $\vz^0$ is sufficiently close to $\vzx$, as shown below.

\begin{lemma} \label{le:initial1:a}
Assume that the noise vector $\vxi$ satisfies \eqref{cond:noise:a}.  For any $0<\delta\le 1$, with probability at least $1 - m\exp (-c_0 n) - O(m^{-10})$, we have
\[
		\dist{\vz^0,\vzx} \le \delta \qquad \mbox{and} \qquad
		\maxm{1\le l\le m}\dist{\vz^{0,\lkh},\vzx} \le \delta,
\]
provided $m\ge C_0(1+\beta_1)^2\betat^{-2}\delta^{-2} K^8 n \log^3 m$ for some absolute constant $C_0>0$.
\end{lemma}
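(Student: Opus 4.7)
The plan is to show that $\E\mm$ has $\vzx$ as its top eigenvector with a spectral gap of order $\norm{\vzx}^2$, bound $\norm{\mm - \E\mm}$ via matrix concentration, and then apply Davis--Kahan together with a scale-alignment through $\gamma$. First I would verify the expectation: under Assumption A, the formula for $\E\mm_0$ quoted at the opening of this section gives $\mdiag(\E\mm_0) = \norm{\vzx}^2\mi + \beta_1\md_1(\vzx) + \frac{\vone^\T\vxi}{m}\mi$, while $\real{\E\mm_0}$ cleanly absorbs the $(1-\beta_2)\bar{\vzx}\vzx^\T$ cross term against $\vzx\vzx^\hh$. The four coefficients in the definition of $\mm$ are tuned precisely so that the $\md_1(\vzx)$ correction, the conjugate outer product, and the residual scalar all cancel, leaving
\[
\E\mm = \norm{\vzx}^2\mi + \vzx\vzx^\hh + \tau\mi,
\]
for some noise-dependent scalar $\tau$ of order $|\vone^\T\vxi|/m$. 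Hence $\vzx/\norm{\vzx}$ is the leading eigenvector of $\E\mm$ with eigenvalue $2\norm{\vzx}^2 + \tau$ and spectral gap $\norm{\vzx}^2$ to the next eigenvalue.

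Next I would bound $\norm{\mm - \E\mm}$ with high probability. Because $y_j = \aabs{\va_j^\hh\vzx}^2 + \xi_j$ involves fourth powers of sub-Gaussian entries, each summand $y_j\va_j\va_j^\hh$ is heavy-tailed as a matrix, so a direct matrix Bernstein bound cannot be applied. The standard remedy is truncation: restrict to the events $\aabs{\va_j^\hh\vzx}^2 \lesssim K^2\log m$ and $\norm{\va_j}^2 \lesssim K^2 n$, apply matrix Bernstein to the truncated sum, and control the untruncated tails via \eqref{eq:sgtail} and a union bound over $j$. The noise contribution is handled by the hypothesis \eqref{cond:noise:a} on $\vone^\T\vxi$, $\norm{\vxi}$ and $\norminf{\vxi}$. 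Analogous estimates govern the diagonal and real-part corrections, while $\gamma - \norm{\vzx}^2 - \vone^\T\vxi/m$ is a scalar Hanson--Wright-type tail. Aggregating these pieces should yield
\[
\norm{\mm - \E\mm} \lesssim \frac{1+\beta_1}{\betat}\, K^4\sqrt{\frac{n\log^3 m}{m}}\,\norm{\vzx}^2,
\]
which is at most $\tfrac{\delta}{4}\norm{\vzx}^2$ under the assumed sample complexity. This is the step I expect to be the main obstacle, since the sharp $n\log^3 m / m$ rate drives the exponents $\betat^{-2}$ and $K^8$ in the hypothesis and requires carefully combining the four terms defining $\mm$.

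With a spectral gap of $\norm{\vzx}^2$ in $\E\mm$ and a perturbation of at most $\tfrac{\delta}{4}\norm{\vzx}^2$, the Davis--Kahan $\sin\Theta$ theorem yields $\dist{\check{\vz}^0,\vzx/\norm{\vzx}} \lesssim \delta$. A scalar sub-exponential tail bound for $\gamma$ simultaneously gives $\sqrt{\gamma} = (1+O(\delta))\norm{\vzx}$ once \eqref{cond:noise:a} is imposed. Combining the angular closeness with the scale alignment shows that $\vz^0 = \sqrt{\gamma}\,\check{\vz}^0$ lies within distance $\delta\norm{\vzx}$ of $\vzx$ modulo a global phase, after absorbing absolute constants into $C_0$.

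Finally, for the leave-one-out version, $\mm^{(l)}$ differs from $\mm$ only by the rank-one term $\tfrac{1}{m}y_l\va_l\va_l^\hh$ together with its diagonal and real-part corrections, and $\gl$ differs from $\gamma$ by a single summand. With high probability each perturbation has spectral norm at most $O(K^4 n\log m / m)\norm{\vzx}^2$, which is far below $\delta\norm{\vzx}^2$ for our $m$; hence the same Davis--Kahan and scale-alignment arguments apply verbatim to each $\vz^{0,(l)}$. A union bound over $l = 1,\ldots,m$ absorbs the extra factor of $m$ into the failure probability, delivering the stated $1 - m\exp(-c_0 n) - O(m^{-10})$.
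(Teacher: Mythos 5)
Your proposal is correct and follows essentially the same route as the paper's proof: the same closed-form expectation $\E\mm=\norm{\vzx}^2\mi+\vzx\vzxh+\frac{\vone^\T \vxi}{m}\mi$, concentration of $\mm_0$, $\mdiag\xkh{\mm_0}$, $\real{\mm_0}$ and $\gamma$ via truncation/Bernstein/covering bounds (which the paper packages as Lemmas \ref{le:aat}, \ref{le:uniform:bound} and \ref{le:noise}), a sin$\Theta$ perturbation bound (Wedin in the paper, Davis--Kahan in your write-up, equivalent here since $\mm$ is Hermitian and \eqref{cond:noise:a} keeps the noise shift and the spectral gap under control), and finally the $\sqrt{\gamma}$ rescaling step. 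The only cosmetic difference is the leave-one-out part, where you bound $\norm{\mm^{\lkh}-\mm}$ directly and take a union bound over $l$, while the paper simply reruns the same concentration argument for each $\mm^{\lkh}$; both are valid.
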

%---------------------Proof of Lemma
\begin{proof}
	This follows from Wedin's sin$\Theta$ theorem \cite{WedinInitial}. See Section \ref{pf:initial1:a}.
\end{proof}
%By taking $\delta_1\lesssim \betat (1+\beta_1)^{-1}$ in \eqref{pf:subric2:a}, we actually need $m\ge C_0(1+\beta_1)^4\betat^{-4} K^8 n \log^3 m$ for some sufficiently large constant $C_0>0$ by \eqref{le:initial1:a}.

\begin{lemma} \label{le:initial2:a}
	With probability at least $1 - m\exp (-c_0 n) - O(m^{-10})$, one has
\[
		\maxm{1\le l\le m} \dist{\vz^{0,\lkh},\tilde{\vz}^0} \le C_{6} \frac{K^7\sqrtnlmmm + K^7\sqrtnlm\norminf{\vxi}}{m\beta_2}
\]
and
\[
		\maxm{1\le l\le m} \norm{\tilde{\vz}^{0,\lkh}-\tilde{\vz}^0} \lesssim C_{6} \frac{K^7\sqrtnlmmm + K^7\sqrtnlm\norminf{\vxi}}{m\beta_2},
\]
provided  $m\ge C_0 (1+\beta_1)^2 \betat^{-2} K^8 n \log^3 m$ for some sufficiently large constant $C_0>0$.
\end{lemma}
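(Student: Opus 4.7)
The plan is to prove both inequalities via a refined form of Wedin's $\sin\Theta$ theorem that exploits the key statistical property of the leave-one-out construction: $\check{\vz}^{0,(l)}$ depends only on $\{\va_j\}_{j\ne l}$ and is therefore independent of $\va_l$. This independence is what lets the target bound scale as $\sqrt{n\log^3 m}$ rather than the $n\log m$ that would come from bounding $\|\mm - \mm^{(l)}\|$ in operator norm alone.

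First I would compute $\mm - \mm^{(l)}$ explicitly. Since $\mm_0 - \mm_0^{(l)} = \tfrac{y_l}{m}\va_l\va_l^\hh$ and $\gamma - \gl = \tfrac{y_l}{m}$, the perturbation decomposes into four rank-structured pieces supported on the $l$-th measurement:
\[
\mm - \mm^{(l)} \;=\; \tfrac{y_l}{m}\Bigl[\tfrac{1}{\beta_2}\va_l\va_l^\hh + \tfrac{2-\beta_1-\beta_2}{\beta_1(2-\beta_2)}\mdiag(\va_l\va_l^\hh) - \tfrac{2-2\beta_2}{\beta_2(2-\beta_2)}\real{\va_l\va_l^\hh} + \tfrac{\beta_1-1}{\beta_1}\mi\Bigr].
\]
From the proof of Lemma~\ref{le:initial1:a} I may assume an $\Omega(1)$ eigengap between the leading and second eigenvalues of $\mm$ and of $\mm^{(l)}$, uniformly in $l$, on the event of probability $1-m\exp(-c_0 n)-O(m^{-10})$.

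Next I apply the one-sided Wedin bound, which supplies phases $\phi_l^*$ with
\[
\|\check{\vz}^0 - \e^{i\phi_l^*}\check{\vz}^{0,(l)}\| \;\lesssim\; \frac{\|(\mm - \mm^{(l)})\check{\vz}^{0,(l)}\|}{\mathrm{gap}}.
\]
The essential gain lies in this numerator. Its dominant summand $\tfrac{y_l}{m\beta_2}(\va_l^\hh\check{\vz}^{0,(l)})\va_l$ has norm $\tfrac{|y_l|}{m\beta_2}|\va_l^\hh\check{\vz}^{0,(l)}|\|\va_l\|$; by independence of $\va_l$ and $\check{\vz}^{0,(l)}$, the subgaussian tail \eqref{eq:sgtail} gives $|\va_l^\hh\check{\vz}^{0,(l)}|\lesssim K\sqrtlm$, while $\|\va_l\|\lesssim K\sqrtn$ and $|y_l|\lesssim K^2\log m+\norminf{\vxi}$ follow from standard subgaussian concentration. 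The diagonal, real-part and identity pieces are handled by analogous componentwise arguments and are of the same order or smaller. A union bound over $l\in[m]$ costs only an extra $\log m$ factor.

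It remains to transfer this eigenvector estimate to the rescaled vectors $\vz^0=\sqrt{\gamma}\check{\vz}^0$ and $\vz^{0,(l)}=\sqrt{\gl}\check{\vz}^{0,(l)}$. Writing
\[
\e^{i\phi_l^*}\vz^{0,(l)}-\tilde{\vz}^0 \;=\; \sqrt{\gl}\bigl(\e^{i\phi_l^*}\check{\vz}^{0,(l)}-\e^{i\phi(0)}\check{\vz}^0\bigr) + \bigl(\sqrt{\gl}-\sqrt{\gamma}\bigr)\e^{i\phi(0)}\check{\vz}^0,
\]
I would control the first summand via the previous step together with $\gl=1+o(1)$ (obtained en route to Lemma~\ref{le:initial1:a}), and the second via $|\sqrt{\gamma}-\sqrt{\gl}|\le|y_l|/(m(\sqrt{\gamma}+\sqrt{\gl}))$, which lies comfortably inside the target bound under $m\gtrsim K^8n\log^3 m$. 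Taking the minimum over $\phi_l^*$ yields the first stated inequality; the second follows immediately because $\tilde{\vz}^{0,(l)}$, being aligned with $\tilde{\vz}^0$, realizes that minimum. The main obstacle will be bookkeeping: carrying the ``apply $\mm-\mm^{(l)}$ to $\check{\vz}^{0,(l)}$'' refinement through all four additive pieces uniformly in $l$ while simultaneously maintaining the $\Omega(1)$ eigengap. The two ingredients that make this possible — the gap and the independence of $\va_l$ from $\check{\vz}^{0,(l)}$ — are precisely what enable the $\sqrt n$ rather than $n$ scaling in the final bound.
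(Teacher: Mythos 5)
Your proposal follows essentially the same route as the paper's proof: the explicit rank-structured formula for $\mm-\mm^{\lkh}$, Wedin's $\sin\Theta$ bound with numerator $\norm{\xkh{\mm-\mm^{\lkh}}\check{\vz}^{0,\lkh}}$, the independence of $\va_l$ from $\check{\vz}^{0,\lkh}$ to obtain the $\sqrt{n\log m}$ (rather than $n$) scaling of that numerator, the $\Omega(1)$ lower bound on $\sigma_1\xkh{\mm^{\lkh}}-\sigma_2\xkh{\mm}$ from the concentration already used for Lemma \ref{le:initial1:a}, and the transfer to the scaled vectors via $\abs{\sqrt{\gamma}-\sqrt{\gl}}$. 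One caveat: your justification of the second inequality is not right as stated, because in the paper's notation $\tilde{\vz}^{0,\lkh}$ is the rotation of $\vz^{0,\lkh}$ aligned with $\vzx$, not with $\tilde{\vz}^0$, so it does not realize the minimum defining $\dist{\vz^{0,\lkh},\tilde{\vz}^0}$; the correct (and easy) fix, which is what the paper does, is to invoke Lemma \ref{le:noalign}, using that both $\tilde{\vz}^{0,\lkh}$ and $\tilde{\vz}^0$ lie within $1/4$ of $\vzx$ and are each optimally aligned with $\vzx$, whence $\norm{\tilde{\vz}^{0,\lkh}-\tilde{\vz}^0}\lesssim \dist{\vz^{0,\lkh},\tilde{\vz}^0}$.
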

\begin{proof}
	See Section \ref{pf:initial2:a}.
\end{proof}

The final claim \eqref{induct:inh:a} for $t=0$ can be proved using the same argument as in deriving \eqref{eq:alzttrue} and hence is omitted.

\subsection{Summary of the above lemmas}\label{subsec:sum of A}
Let's summarize the proof of Thoerem \ref{th:A} below. First, the guarantees for spectral initialization are established by Lemma  \ref{le:initial1:a} and Lemma \ref{le:initial2:a}, indicating that condition \eqref{induct:a} holds for $\vz^0$ with probability at least $1 - m\exp (-c_0 n) - O(m^{-10})$. Next, using Lemma  \ref{le:ztl:a} along with    \eqref{cond:result:1} and \eqref{eq:alzttrue}, we observe that if the  $t$-th iteration $\vzt$ satisfies \eqref{induct:a}, then it will continue to hold for $\vztt$ with probability at least $1 - m\exp (-c_0 n) - O(m^{-20})$. Taking the union bound over $0\le t\le t_0\le m^{10}$, we arrive at  the conclusion that
\begin{equation*}
	\dist{\vzt,\vzx} \le (1-\frac{\betat}{8} \eta)^{t} \dist{\vz^0,\vzx} + \frac{8\abs{\vone^\T \vxi} + 8 C_9 K^2\xkh{\sqrt{n}\norm{\vxi} + n\norminf{\vxi}}}{m\betat}
\end{equation*}
holds with probability at least $1 - m\exp (-c_0 n) - O(m^{-10})$.
%Here, it should be noted that
%\begin{equation*}
%	\PP\xkh{\maxm{1\le j\le m} \norm{\va_j} \le c_3 K^2 \sqrtn}\ge 1- m\exp(-c_0n)
%\end{equation*}
%is independent of the iteration sequence $\dkh{\vzt}$ for some universal constants $c_0,c_3>0$.

\section{Proof of Theorem \ref{th:B} }\label{sec:pf:B}
The proof of Theorem \ref{th:B} follows a similar approach to that of Theorem \ref{th:A} given in Section \ref{sec:pf:A}. Specifically,  we first show in Lemma \ref{le:ric:b} that,  under Assumption B,   the loss function $f$  satisfies the restricted strong convexity and smoothness conditions within a defined region of incoherence and contraction. The main result is then established by ensuring that the iterations generated by Algorithm \ref{alg:2} remain within this region throughout. To achieve this, we apply the leave-one-out technique combined with mathematical induction to confirm that the iterations indeed stay within this region. Notably, only Lemma \ref{le:ric:b} differs substantially from Lemma  \ref{le:ric:a}, for which we provide a detailed proof. The remaining lemmas closely mirror those under Assumption A, so their proofs are omitted for brevity.
Without loss of generality, we assume throughout this section that $\norm{z^*}=1$.

\subsection{Characterizing Local Geometry in the RIC and Error Contraction}
First, we establish the smoothness and restricted strong convexity properties of the loss function $f$ under Assumption B within a specified region of incoherence and contraction, as stated below.
\begin{lemma}\label{le:ric:b}
	Suppose that $\norm{\vzx}=1$, Assumption B holds true with $\mu\le 1/10$, and the noise vector $\vxi$ satisfies
	\begin{equation}\label{cond:noise:b}
		\aabs{\vone^\T \vxi}\le \tilde{C}_{11} m,\ \norm{\vxi}\lesssim \sqrtm ,\ \mbox{and}\ \norminf{\vxi}\lesssim \log m
	\end{equation}
	for some sufficiently small constant $\tilde{C}_{11}>0$.
	Assume that $m\ge C_{10}\beta_2^{-2} K^8 n \log^3 m$ for some sufficiently large constant $C_{10}>0$. Then with probability at least $1 - m\exp (-c_0 n)-O(m^{-10})$ for some universal constant $c_{0}>0$, the Wirtinger Hessian $\nabla^2 f(\vz)$ obeys
	\begin{equation}\label{b:ric}
		\norm{\nabla^2 f(\vz)}\le 9 \quad and \quad \vu^\hh \nabla^2 f(\vz) \vu\ge \frac{\beta_2}{4}\norm{\vu}^2
	\end{equation}
	simultaneously for all
	\begin{equation*}
		\vz \quad \mbox{and} \quad \vu=
		\begin{bmatrix}
			\vz_1-\vz_2\\
			\overline{\vz_1-\vz_2}
		\end{bmatrix},
	\end{equation*}
	where $\vz$ satisfies
	\begin{eqnarray}
		\label{cond1:ric:b}\norm{\vz-\vz^*}&\le&  \delta_{11};\\
		\label{cond2:ric:b}\max\limits_{1\le j\le m}\aabs{\va_j^\hh \xkh{\vz-\vz^*}}&\le &  \tilde{C}_{13} K \sqrt{\log m};
	\end{eqnarray}
	$\vz_1$ is aligned with $\vz_2$, and they satisfy
	\begin{equation} \label{eq:z1z2}
		\max\dkh{\norm{\vz_1-\vz^*},\norm{\vz_2-\vz^*}}\le \delta_{11}.
	\end{equation}
	Here, $\tilde{C}_{13}>0$ is a sufficiently large constant, and $\delta_{11}>0$ is a sufficiently small constant.
\end{lemma}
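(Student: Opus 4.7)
The plan is to adapt the proof of Lemma \ref{le:ric:a} step by step, replacing every place where the fourth-moment gap $\beta_1$ was used to create a positive spectral gap in the Hessian with an argument that exploits the non-peakiness $\norminf{\vzx}^2 \le \mu \le 1/10$. Throughout, I work with the explicit formula for $\nabla^2 f(\vz)$ in \eqref{def:Hessian}, and for the restricted strong convexity I test it against the conjugate-symmetric vector $\vu = [\vh;\bar{\vh}]$ with $\vh = \vz_1 - \vz_2$. A direct block expansion gives
\begin{equation*}
\vu^\hh\nabla^2 f(\vz)\vu \;=\; \frac{2}{m}\sum_{j=1}^m\bigl(2\aabs{\va_j^\hh \vz}^2 - y_j\bigr)\aabs{\va_j^\hh\vh}^2 \;+\; \frac{2}{m}\sum_{j=1}^m\real{(\va_j^\hh\vz)^2\,\overline{(\va_j^\hh\vh)^2}},
\end{equation*}
so both the upper and lower bounds reduce to controlling two empirical sums.

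First, I would decompose $\nabla^2 f(\vz)$ into (i) the noise-free Hessian at $\vzx$ (with $y_j$ replaced by $\aabs{\va_j^\hh\vzx}^2$), (ii) a noise block proportional to $-\frac{1}{m}\sum_j \xi_j\va_j\va_j^\hh$, and (iii) the perturbation $\nabla^2 f(\vz) - \nabla^2 f(\vzx)|_{\text{noise-free}}$. The noise block is handled by a Bernstein-type bound on $\bigl\|\frac1m\sum_j \xi_j\va_j\va_j^\hh\bigr\|$ combined with the hypotheses $\norminf{\vxi}\lesssim\log m$, $\norm{\vxi}\lesssim\sqrtm$, and $\aabs{\vone^\T\vxi}\le \tilde C_{11}m$. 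The perturbation piece is controlled in operator norm using $\norm{\vz - \vzx} \le \delta_{11}$ together with the incoherence bound $\max_j \aabs{\va_j^\hh(\vz-\vzx)} \le \tilde C_{13}K\sqrtlm$, exactly as in the proof of Lemma \ref{le:ric:a}; the explicit dependence on $\vz-\vzx$ is multilinear, so small $\delta_{11}$ together with the incoherence pointwise bound give a perturbation of order at most $\beta_2/8$ in the restricted sense.

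The heart of the argument is the noise-free Hessian at $\vzx$. Taking expectations of the two empirical sums with the moment identities from Lemma \ref{le:e} under Assumption B (crucially $\E\aabs{a_1}^4 = 1$ and $\aabs{\E a_1^2}^2 = 1-\beta_2$), the expected value of $\vu^\hh\nabla^2 f_{\text{clean}}(\vzx)\vu$ produces three types of terms: positive $\norm{\vh}^2$ and $\aabs{\vh^\hh\vzx}^2$ contributions, a $\beta_2$-weighted conjugate pairing of the form $2\beta_2\,\real{(\vzx)^\T\vh}^2$ arising from the second sum, and a diagonal correction of the form $-(2-\beta_2)\sum_k\aabs{z^*_k}^2\aabs{h_k}^2$ (together with a bilinear conjugate variant) coming from the missing fourth-moment cushion. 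The non-peakiness $\norminf{\vzx}^2\le\mu\le 1/10$ gives $\sum_k\aabs{z^*_k}^2\aabs{h_k}^2 \le \mu\norm{\vh}^2\norm{\vzx}^2 \le \norm{\vh}^2/10$, so this negative diagonal term is dominated by the positive $\beta_2$ contribution, yielding an expected lower bound of at least $\frac{\beta_2}{2}\norm{\vh}^2 = \frac{\beta_2}{4}\norm{\vu}^2$. Uniform concentration over $\vz$ in the RIC then follows from a standard $\epsilon$-net argument combined with matrix Bernstein / truncated Hanson--Wright inequalities, using $\normsg{\va_j}\le K$ and the sample-size condition $m\gtrsim \beta_2^{-2}K^8 n\log^3 m$; the same concentration applied to raw operator norms delivers the smoothness bound $\norm{\nabla^2 f(\vz)}\le 9$, with no fourth-moment gap required.

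The main obstacle is precisely the step where, deprived of the $\beta_1$ margin of Assumption A, the diagonal peakiness correction threatens to cancel the positive $\beta_2$ part of the expected Hessian; managing this is exactly the role of $\mu \le 1/10$, and the quantitative constants ($\beta_2/4$ floor, $\delta_{11}$, $\tilde C_{13}$, and the required $C_{10}$) must be tuned so that the peakiness term, the perturbation term, and the noise term together consume at most $\beta_2/4$ worth of slack, leaving $\beta_2/4\cdot\norm{\vu}^2$ for the final lower bound. Once these constants are fixed, the smoothness bound, the RSC bound, and the uniform-over-$\vz$ statement combine verbatim to yield \eqref{b:ric} on the prescribed event of probability $1 - m\exp(-c_0 n) - O(m^{-10})$.
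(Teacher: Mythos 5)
Your plan is structurally the same as the paper's: decompose $\nabla^2 f(\vz)$ into the population Hessian at $\vzx$ plus a perturbation, use Lemma~\ref{le:e} (with $\beta_1=0$) to evaluate the expectation, use the non-peakiness $\mu\le 1/10$ to tame the negative diagonal term $-(2-\beta_2)\sum_k|z_k^*|^2|h_k|^2$, and control the perturbation by an $\epsilon$-net-plus-Bernstein argument in operator norm. Your block expansion of $\vu^\hh\nabla^2 f(\vz)\vu$ is correct, and you correctly identify that the smoothness bound $\norm{\nabla^2 f(\vz)}\le 9$ needs no fourth-moment gap. So the key ideas are in place.

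However, your constants do not close. You state that the expected (noise-free) Hessian yields a lower bound of only $\frac{\beta_2}{2}\norm{\vh}^2 = \frac{\beta_2}{4}\norm{\vu}^2$, and then you separately say the perturbation, noise, and peakiness together should consume at most $\beta_2/4$ of slack, leaving $\frac{\beta_2}{4}\norm{\vu}^2$. These two statements are incompatible: starting from $\frac{\beta_2}{4}\norm{\vu}^2$ and subtracting a further $\frac{\beta_2}{4}\norm{\vu}^2$ gives zero, not $\frac{\beta_2}{4}\norm{\vu}^2$. The population bound must be substantially larger than $\beta_2/4$ to survive the perturbation. In the paper the algebra gives $\vu^\hh\nabla^2 F(\vzx)\vu \ge \frac{7\beta_2}{10}\norm{\vu}^2$ (Lemma~\ref{le:subric1:b}): the positive floor is $2\beta_2\norm{\vzx}^2\norm{\vw}^2$ coming from the $\norm{\vzx}^2\norm{\vw}^2$ term in $J_1$ (not from the second-block conjugate pairing as you suggest), the peakiness bound removes at most $4\beta_2\mu\norm{\vw}^2\le \frac{2\beta_2}{5}\norm{\vw}^2$ via the identity $|\alpha|^2+\real{\alpha^2}=2\real{\alpha}^2$, and the alignment error and noise terms then cost at most another $\frac{\beta_2}{5}\norm{\vw}^2$. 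Only then does the perturbation bound $\norm{\nabla^2 f(\vz)-\nabla^2 F(\vzx)}\le \frac{9\beta_2}{20}$ leave exactly $\frac{\beta_2}{4}\norm{\vu}^2$. Without reproducing this stronger population bound, your proof as stated cannot reach the $\frac{\beta_2}{4}$ floor.

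A secondary point: to get the positive $2\beta_2\norm{\vw}^2$ term you must split $\norm{\vzx}^2\norm{\vw}^2$ into a $(1-\beta_2)$ piece (which combines with the remaining $(1-\beta_2)$-weighted cross terms to form complete squares and may vanish) and a $\beta_2$ piece that survives; this is the content of the paper's claim~\eqref{j1:b} specialized to $\beta_1=0$. Your description collapses this into ``positive $\norm{\vh}^2$ contributions,'' which obscures where the guaranteed $\beta_2$-weighted floor comes from. Once you carry out this bookkeeping explicitly, the constants work out and your proof will match the paper's.
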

%---------------------Proof of Lemma
\begin{proof}
	See Section \ref{pf:ric:b}.
\end{proof}
Next, we characterize the contraction of \(l_2\) error.

\begin{lemma} \label{le:error:b}
	Suppose that $m\ge C_0 K^8 n \log^3 m$ for some sufficiently large constant $C_0>0$, and the step size $\eta>0$ is some sufficiently small constant. The noise vector $\vxi$ satisfies \eqref{cond:noise:b}. Then there exists an event that does not depend on $t$ and has probability at least $1-m\exp (-c_0 n)-O(m^{-20})$, such that when conditions \eqref{cond1:ric:b} and \eqref{cond2:ric:b} hold for $\vz=\vztu$, one has
	\begin{equation*}
		\dist{\vztt,\vzx}\le (1-\frac{\beta_2}{8} \eta) \dist{\vzt,\vzx}+\eta\frac{\abs{\vone^\T \vxi}+C_{19} K^2\xkh{\sqrt{n}\norm{\vxi} + n\norminf{\vxi}}}{m}
	\end{equation*}
	for some sufficiently large constant $C_{19}>0$. Here, $\beta_2>0$ is the  parameter given in Assumption B, and $\vztu$ is defined in \eqref{def:ztilde:a}.
\end{lemma}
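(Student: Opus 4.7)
The proof follows the template of Lemma~\ref{le:error:a}, with the Hessian estimates of Lemma~\ref{le:ric:b} (under Assumption~B) replacing those of Lemma~\ref{le:ric:a}. The loss $f$ is phase-invariant, and the Wirtinger gradient satisfies $\nabla_{\vz} f(e^{i\phi}\vz)=e^{i\phi}\,\nabla_{\vz} f(\vz)$, so multiplying the update rule by $e^{i\phi(t)}$ yields $e^{i\phi(t)}\vztt=\vztu-\eta\,\nabla_{\vz} f(\vztu)$. Hence
\[
\dist{\vztt,\vzx}\le\norm{\vztu-\vzx-\eta\,\nabla_{\vz} f(\vztu)}\le\norm{(\vztu-\vzx)-\eta(\nabla_{\vz} f(\vztu)-\nabla_{\vz} f(\vzx))}+\eta\,\norm{\nabla_{\vz} f(\vzx)}.
\]
The plan is to contract the first, ``signal'', term using Lemma~\ref{le:ric:b}, and to control the second, ``noise'', term via matrix concentration.

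For the signal term, set $\vh=\vztu-\vzx$ and $\vu=[\vh;\bar\vh]$ (an augmented vector as in Lemma~\ref{le:ric:b}), so $\norm{\vu}^2=2\norm{\vh}^2$. The Wirtinger fundamental theorem of calculus gives $\real{\jkh{\vh,\nabla_{\vz} f(\vztu)-\nabla_{\vz} f(\vzx)}}=\tfrac12\,\vu^\hh\bigl(\int_0^1\nabla^2 f(\vzx+\tau\vh)\,d\tau\bigr)\vu$. For every $\tau\in[0,1]$ the point $\vzx+\tau\vh$ obeys $\norm{(\vzx+\tau\vh)-\vzx}=\tau\norm{\vh}\le\delta_{11}$ and $\max_j\aabs{\va_j^\hh(\vzx+\tau\vh-\vzx)}=\tau\max_j\aabs{\va_j^\hh\vh}\le \tilde C_{13} K\sqrt{\log m}$, so the hypotheses \eqref{cond1:ric:b}--\eqref{cond2:ric:b} remain in force along the segment. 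Lemma~\ref{le:ric:b} then yields $\vu^\hh\int\nabla^2 f\,d\tau\,\vu\ge(\beta_2/4)\norm{\vu}^2$ and $\norms{\int\nabla^2 f\,d\tau}\le 9$. Expanding $\norm{\vh-\eta(\nabla_{\vz} f(\vztu)-\nabla_{\vz} f(\vzx))}^2$ and absorbing the $\eta^2$ smoothness term whenever $\eta\le\beta_2/324$ gives
\[
\norm{(\vztu-\vzx)-\eta(\nabla_{\vz} f(\vztu)-\nabla_{\vz} f(\vzx))}\le(1-\beta_2\eta/8)\,\dist{\vzt,\vzx}.
\]

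For the noise term, $\nabla_{\vz} f(\vzx)=-\tfrac1m\sum_{j=1}^m\xi_j\va_j\va_j^\hh\vzx$, and $\E[\va_j\va_j^\hh]=\mi$ under Assumption~B, so
\[
\nabla_{\vz} f(\vzx)=-\tfrac{\vone^\T\vxi}{m}\,\vzx-\tfrac{1}{m}\sum_{j=1}^m\xi_j\bigl(\va_j\va_j^\hh-\mi\bigr)\vzx.
\]
The first, deterministic, piece contributes exactly $\aabs{\vone^\T\vxi}/m$. For the stochastic second piece, condition on $\vxi$ and, for any unit $\vw\in\C^n$, apply a Bernstein inequality to $\tfrac1m\sum_j\xi_j\bigl[(\vw^\hh\va_j)(\va_j^\hh\vzx)-\vw^\hh\vzx\bigr]$, whose summands are centred sub-exponential scalars with $\psi_1$-norm at most $CK^2\aabs{\xi_j}$. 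A standard $\varepsilon$-net argument over the unit sphere of $\C^n$ then yields
\[
\norm{\tfrac{1}{m}\sum_{j=1}^m\xi_j(\va_j\va_j^\hh-\mi)\vzx}\le C_{19}K^2\Bigl(\tfrac{\sqrt{n}\,\norm{\vxi}}{m}+\tfrac{n\,\norminf{\vxi}}{m}\Bigr)
\]
with probability at least $1-O(m^{-20})$, whenever $m\ge CK^8 n\log^3 m$. Combining the signal and noise bounds delivers the stated contraction.

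The main technical obstacle is the last concentration inequality: because $\xi_j(\va_j\va_j^\hh-\mi)$ is only sub-exponential, the Bernstein tail splits into a sub-Gaussian regime scaling like $\sqrt{n}\norm{\vxi}/m$ and an exponential regime scaling like $n\norminf{\vxi}/m$, and these must be matched precisely to the deterministic bounds on $\vxi$ in \eqref{cond:noise:b}. Aside from this and the routine monotonicity check preserving the RIC along $\vzx+\tau\vh$, the remaining ingredients — the phase-equivariance identity for the Wirtinger gradient and the assembly of an iteration-independent high-probability event — are direct transcriptions of the corresponding steps in the proof of Lemma~\ref{le:error:a}, with $\betat$ uniformly replaced by $\beta_2$.
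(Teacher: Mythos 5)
Your proposal is correct and takes essentially the same route as the paper, which proves this lemma verbatim as Lemma \ref{le:error:a}: decompose the update into a contraction term handled by the fundamental theorem of calculus along $\vzx+\tau(\vztu-\vzx)$ together with the RIC bounds $\norm{\nabla^2 f}\le 9$ and $\vu^\hh\nabla^2 f\,\vu\ge\frac{\beta_2}{4}\norm{\vu}^2$ from Lemma \ref{le:ric:b} (giving the step-size restriction $\eta\le\beta_2/324$), plus a noise term bounded by the spectral estimate on $\frac1m\sum_j\xi_j\va_j\va_j^\hh$, which is exactly Lemma \ref{le:noise} that you re-derive inline via Bernstein and an $\epsilon$-net. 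The only cosmetic differences are that you work with $\real{\jkh{\vh,\cdot}}$ rather than the augmented $2n$-dimensional vectors (equivalent, and note $\vztu$ is aligned with $\vzx$ by definition, as the alignment hypothesis of Lemma \ref{le:ric:b} requires), and you quote the noise concentration with probability $1-O(m^{-20})$ where the paper's Lemma \ref{le:noise} gives $1-2\exp(-c_0 n)$; both fit within the stated probability budget $1-m\exp(-c_0 n)-O(m^{-20})$.
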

%---------------------Proof of Lemma
\begin{proof}
	The proof follows from the same argument as in the proof of Lemma \ref{le:error:a}, and is thus omitted here for brevity.
\end{proof}

If the conditions \eqref{cond1:ric:b} and \eqref{cond2:ric:b} hold for the previous $t$ iterations, it implies that
\begin{align*}
	\dist{\vztt,\vzx}\le& (1-\frac{\beta_2}{8} \eta) \dist{\vzt,\vzx}+\eta\frac{\abs{\vone^\T \vxi}+C_{19} K^2\xkh{\sqrt{n}\norm{\vxi} + n\norminf{\vxi}}}{m}\\
	\le& (1-\frac{\beta_2}{8} \eta)^{t+1} \dist{\vz^0,\vzx} + \frac{8\abs{\vone^\T \vxi}+8C_{19}K^2\xkh{\sqrt{n}\norm{\vxi} + n\norminf{\vxi}}}{m\beta_2}.
\end{align*}
This immediately gives
\begin{align*}
	\dist{\vztt,\vzx} \le \delta_{11},
\end{align*}
provided that $m\gtrsim K^2 \betat^{-1} n \log^3 m$, $\dist{\vz^0,\vzx} \le \delta_{11}/2$, and $\vxi$ obeys  \eqref{cond:noise:b}.

\subsection{Leave-one-out Sequences and Approximate Independence}
Here, we employ the following loss function
\begin{equation}\label{loss:leave:b}
	f^{(l)} (\vz) = \frac{1}{2m} \sum_{j,j\neq l} \xkh{\abs{\nj{\va_j,\vz}}^2 - y_j}^2.
\end{equation}
Subsequently, the sequence \(\{\vztl \}_{t \ge 0}\) is then generated by applying Algorithm \ref{alg:b:leave one out} to the loss function \eqref{loss:leave:b}.

\begin{algorithm}[H]
	\caption{The $l$-th iterative sequence for Assumption B}\label{alg:b:leave one out}
	\begin{algorithmic}[0]
		\State \textbf{Input:} $\dkh{\va_j}_{1\le j\le m,j\ne l}$, $\dkh{y_j}_{1\le j\le m,j\ne l}$, $\beta_2$ and $\eta_0$.
		\State \textbf{Spectral initialization: } Let $\check{\vz}^{0,\lkh}$ be the left singular vector for the leading singular value of \begin{equation*}
			\mm^\lkh = \frac{1}{\beta_2}\mm_0^\lkh-\frac{2-2\beta_2}{\beta_2(2-\beta_2)}\real{\mm_0^\lkh} + \frac{1-\beta_2}{2-\beta_2} \gl \mi
		\end{equation*}
		with
		 \[
		 \mm_0^\lkh = \frac{1}{m}\sum_{j=1,j\ne l}^{m} y_j\va_j\va_j^\hh \quad \mbox{and} \quad  \gl = \frac{1}{m}\sum_{j=1,j\ne l}^{m} y_j.
		 \]
		Set $\vz^{0,\lkh} = \sqrt{\gl}\check{\vz}^{0,\lkh}$, $\eta = \eta_0 /\gl $.
		\State \textbf{Gradient updates: for} $t = 0,1,2\cdots,T-1$ \textbf{do}\begin{equation*}
			\vzttl = \vztl - \eta \nabla_{\vz} f^{(l)} (\vztl).
		\end{equation*}
	\end{algorithmic}
\end{algorithm}

Next, the incoherence condition between the iterations $\dkh{\vzt}$ and the measurement vectors $\dkh{\va_j}$ is described by the following inductive assumptions:
\begin{subequations}\label{induct:b}
	\begin{align}
		\label{induct:err:b}\dist{\vzt,\vzx}&\le C_{13}+ C_{14} \frac{\abs{\vone^\T \vxi}}{m\beta_2} + C_{15} K^2\frac{\sqrt{n} \norm{\vxi} + n \norminf{\vxi}}{m\beta_2},\\
		\label{induct:ztl:b}\max_{1\le l\le m}\dist{\vztl,\vztu}&\le C_{16} \frac{K^7\sqrtnlmmm}{m\beta_2}\xkh{1+\frac{\abs{\vone^\T \vxi}+\sqrtn\norm{\vxi}}{m} + \frac{\norminf{\vxi}}{K^4 \log m}},\\
		\max_{1\le l\le m}\norm{\vztlu-\vztu}&\lesssim C_{16} \frac{K^7\sqrtnlmmm}{m\beta_2}\xkh{1+\frac{\abs{\vone^\T \vxi}+\sqrtn\norm{\vxi}}{m} + \frac{\norminf{\vxi}}{K^4 \log m}},\\
		\label{induct:inh:b}\max_{1\le l\le m}\abs{\va_l^\hh \xkh{\vztu-\vzx}}&\le C_{17}K\sqrtlm \xkh{1 + \frac{\abs{\vone^\T \vxi} + K^2\sqrtn \norm{\vxi} + K^4 n\norminf{\vxi}}{m\beta_2}}.
	\end{align}
\end{subequations}
Here, $C_{14}>0$ is a universal constant, $C_{13}>0$ is a sufficiently small constant, and $C_{15},C_{16},C_{17}>0$ are sufficiently large constants.

Our aim is to demonstrate that these assumptions hold inductively from the $t$-th step to the $(t+1)$-th step with high probability,  as established in the following lemma.
%---------------------Lemma
\begin{lemma} \label{le:ztl:b}
	Suppose that $m\ge C_{10} K^8 n \log^3 m$ for some sufficiently large constant $C_{10}>0$, and the step size $\eta>0$ is some sufficiently small constant. The noise vector $\vxi$ satisfies \eqref{cond:noise:b}. Let $\mathcal{E}_t$ be the event where \eqref{induct:err:b}-\eqref{induct:inh:b} hold for the $t$-th iteration. Then there exist an event $\mathcal{E}_{t+1,1} \subseteq \mathcal{E}_t$ obeying $\PP\xkh{\mathcal{E}_t \cap \mathcal{E}_{t+1,1}^C}\le m\exp (-c_0 n) + O(m^{-20})$ such that it holds
	\begin{equation*}
		\max\limits_{1\le l\le m}\dist{\vzttl,\vzttu}\le C_{16} \frac{K^7\sqrtnlmmm}{m\beta_2}\xkh{1+\frac{\abs{\vone^\T \vxi}+\sqrtn\norm{\vxi}}{m} + \frac{\norminf{\vxi}}{K^4 \log m}}
	\end{equation*}
	and \begin{equation*}
		\max\limits_{1\le l\le m}\norm{\vzttlu-\vzttu}\lesssim C_{16} \frac{K^7\sqrtnlmmm}{m\beta_2}\xkh{1+\frac{\abs{\vone^\T \vxi}+\sqrtn\norm{\vxi}}{m} + \frac{\norminf{\vxi}}{K^4 \log m}}.
	\end{equation*}
	Furthermore,   there exist an event $\mathcal{E}_{t+1,2} \subseteq \mathcal{E}_t$ obeying $\PP\xkh{\mathcal{E}_t \cap \mathcal{E}_{t+1,2}^C}\le m\exp (-c_0 n) + O(m^{-20})$ such that it holds
	\begin{equation*}
		\max_{1\le l\le m}\abs{\va_l^\hh \xkh{\vzttu-\vzx}} \le C_{17}K\sqrtlm \xkh{1 + \frac{\abs{\vone^\T \vxi} + K^2\sqrtn \norm{\vxi} + K^4 n\norminf{\vxi}}{m\beta_2}}.
	\end{equation*}
\end{lemma}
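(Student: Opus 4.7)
The plan is to mirror the strategy used for Lemma \ref{le:ztl:a}, with the Assumption B version of the RIC (Lemma \ref{le:ric:b}) replacing the corresponding ingredient. The key observation driving the entire argument is that $\vzttl$ depends on the sampling vectors only through $\{\va_j\}_{j\neq l}$, so in any expression like $\va_l^\hh \vzttlu$ the vector $\va_l$ is independent of $\vzttlu$. Throughout the argument, I will condition on the event $\mathcal{E}_t$ so that the hypotheses \eqref{induct:err:b}--\eqref{induct:inh:b} are available, and I will also use Lemma \ref{le:order:statistics}--type subgaussian concentration to assert $\max_l\norm{\va_l}\lesssim K^2\sqrt n$ and $\max_l|\va_l^\hh \vx|\lesssim K\sqrtlm\norm{\vx}$ for any fixed $\vx$ independent of $\va_l$, with failure probability $O(m^{-20})$.

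First I would establish the leave-one-out closeness bounds \eqref{eq:le331}--\eqref{eq:le332}. Writing the gradient recursions for $\vztt$ and $\vzttl$ and adding and subtracting $\nabla f^{(l)}(\vztu)$ gives
\begin{equation*}
\vzttu-\vzttlu = \bigl[\mi - \eta \nabla^2 f^{(l)}(\widetilde{\vz})\bigr](\vztu-\vztlu) - \frac{\eta}{m}\bigl(|\va_l^\hh \vztu|^2 - y_l\bigr)\va_l\va_l^\hh \vztu
\end{equation*}
(after aligning phases and invoking the integral form of the mean value theorem on some interpolate $\widetilde{\vz}$). Applying Lemma \ref{le:ric:b} to the first bracket contracts by a factor of at most $1-\beta_2\eta/8$; this is valid because the hypotheses \eqref{induct:err:b} and \eqref{induct:inh:b} ensure that $\vztu$ (and hence $\widetilde{\vz}$) lies in the RIC. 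For the second term, I would bound $|\va_l^\hh\vztu|$ by $|\va_l^\hh(\vztu-\vztlu)|+|\va_l^\hh(\vztlu-\vzx)|+|\va_l^\hh\vzx|$ and use independence on the last two terms to get $O(K\sqrtlm)$ factors, combined with $\|\va_l\|\lesssim K^2\sqrt n$ and the noise tail bound $|\xi_l|\lesssim \log m$ from \eqref{cond:noise:b}. Plugging in the size of $\norm{\vztu-\vztlu}$ and $\norm{\vztlu-\vzx}$ from the inductive hypotheses, solving the recursion reproduces the stated bound with the same constant $C_{16}$.

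For the incoherence bound on $|\va_l^\hh(\vzttu-\vzx)|$, I would apply the triangle inequality
\begin{equation*}
|\va_l^\hh(\vzttu-\vzx)| \le \|\va_l\|\,\norm{\vzttu-\vzttlu} + |\va_l^\hh(\vzttlu-\vzx)|,
\end{equation*}
control the first summand via $\|\va_l\|\lesssim K^2\sqrt n$ and the already-established leave-one-out bound, and, since $\vzttlu-\vzx$ is independent of $\va_l$, control the second summand by $c_2 K\sqrtlm\,\norm{\vzttlu-\vzx}\le c_2 K\sqrtlm(\norm{\vzttlu-\vzttu}+\norm{\vzttu-\vzx})$. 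The term $\norm{\vzttu-\vzx}$ is bounded by the $\ell_2$ error contraction (Lemma \ref{le:error:b}) combined with \eqref{induct:err:b}. Taking the maximum over $l$ and a union bound yields the claim with the constant $C_{17}$, provided $m\gtrsim K^8 n\log^3 m$ so that the $K^9 n\sqrt{\log^3 m}/m$-type terms are absorbed into the $K\sqrtlm$-type terms.

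The main obstacle, as in Lemma \ref{le:ztl:a}, is the phase-alignment bookkeeping: the minimizing phases $\phi(t)$ and $\phi^{(l)}(t)$ defining $\vztu$ and $\vztlu$ are not equal, and one has to argue that after one step the aligned vectors $\vzttu$ and $\vzttlu$ still have the right relation so that the Hessian integration argument uses an iterate pair $(\vz_1,\vz_2)$ satisfying the alignment condition of Lemma \ref{le:ric:b}. This is handled by replacing $\phi^{(l)}(t+1)$ by the phase that aligns $\vzttl$ with $\vzttu$, which can only decrease the distance; the rest of the analysis then proceeds unchanged. A secondary subtlety, specific to Assumption B, is that the $\mu$-flatness of $\vzx$ is needed to invoke Lemma \ref{le:ric:b} but is only used there and not in the leave-one-out manipulations themselves, so once the RIC is available the remainder of the proof is essentially identical to the Assumption A case and can be omitted as the paper indicates.
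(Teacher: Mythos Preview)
Your proposal is correct in spirit and follows the same leave-one-out strategy the paper uses (it explicitly defers to the proof of Lemma~\ref{le:ztl:a} and the display~\eqref{eq:alzttrue}), but your decomposition is the mirror image of the paper's. You add and subtract $\nabla f^{(l)}(\vztu)$, so your contraction term carries the \emph{leave-one-out} Hessian $\nabla^2 f^{(l)}$ and your residual term is $\frac{\eta}{m}\bigl(|\va_l^\hh \vztu|^2 - y_l\bigr)\va_l\va_l^\hh \vztu$, evaluated at the \emph{full} iterate. The paper instead adds and subtracts $\nabla f(\vztlh)$: its contraction term uses the \emph{full} Hessian $\nabla^2 f$ (so Lemma~\ref{le:ric:b} applies verbatim), and its residual is $\frac{\eta}{m}\bigl(|\va_l^\hh \vztlh|^2 - y_l\bigr)\va_l\va_l^\hh \vztlh$, evaluated at the \emph{leave-one-out} iterate, which is independent of $\va_l$ and can be bounded directly by Lemma~\ref{le:order:statistics} without the extra triangle-inequality splitting you describe.

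Both routes work, but yours has one small patch to make explicit: Lemma~\ref{le:ric:b} is stated for $\nabla^2 f$, not $\nabla^2 f^{(l)}$, so when you write ``Applying Lemma~\ref{le:ric:b} to the first bracket'' you need a line noting that $\|\nabla^2 f(\vz)-\nabla^2 f^{(l)}(\vz)\|\lesssim \frac{1}{m}\bigl(|\va_l^\hh\vz|^2+|y_l|\bigr)\|\va_l\|^2\lesssim K^6 n\log m/m$ on the RIC, which is absorbed into the strong-convexity constant under $m\gtrsim K^8 n\log^3 m$. The paper's decomposition sidesteps this entirely. Your handling of the phase-alignment bookkeeping and the incoherence bound via \eqref{eq:alzttrue} matches the paper's argument.
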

%\begin{lemma} \label{le:inh:b}
%	Suppose that $m\ge C_{10} K^8 n \log^3 m$ for some sufficiently large constant $C_{10}>0$, and the step size $\eta>0$ is some sufficiently small constant. The noise vector $\vxi$ satisfies \eqref{cond:noise:b}. Let $\mathcal{E}_t$ be the event where \eqref{induct:err:b}-\eqref{induct:inh:b} hold for the $t$-th iteration. Then there exist an event $\mathcal{E}_{t+1,2} \subseteq \mathcal{E}_t$ obeying $\PP\xkh{\mathcal{E}_t \cap \mathcal{E}_{t+1,2}^C}\le m\exp (-c_0 n) + O(m^{-20})$, one has
%	\begin{equation*}
%		\max_{1\le l\le m}\abs{\va_l^\hh \xkh{\vzttu-\vzx}} \le C_{17}K\sqrtlm \xkh{1 + \frac{\abs{\vone^\T \vxi} + K^2\sqrtn \norm{\vxi} + K^4 n\norminf{\vxi}}{m\beta_2}}.
%	\end{equation*}
%\end{lemma}
This lemma  can be established using the same arguments as those for Lemma \ref{le:ztl:a} and \eqref{eq:alzttrue};  we omit the details here for brevity.

\subsection{Spectral Initialization}
It remains to verify the induction hypotheses for the base cases, i.e., $t=0$.

\begin{lemma} \label{le:initial1:b}
	Suppose that the noise vector $\vxi$ satisfies \eqref{cond:noise:b}. For any $0 < \delta \le 1$,  with probability at least $1 - m\exp (-c_0 n) - O(m^{-10})$, we have
	\begin{align*}
		\dist{\vz^0,\vzx} \le& \delta, \\
		\maxm{1\le l\le m} \dist{\vz^{0,\lkh},\vzx} \le& \delta,
	\end{align*}
	as long as $m\ge C_{10}\beta_2^{-2}\delta^{-2} K^8 n \log^3 m$ for some sufficiently large constant $C_{10}>0$.
\end{lemma}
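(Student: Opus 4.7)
The plan is to follow the same blueprint as Lemma \ref{le:initial1:a}: compute $\E[\mm]$, identify the ideal low-rank signal matrix whose leading eigenvector is proportional to $\vzx$, show that $\mm$ concentrates sharply around $\E[\mm]$, and then invoke Wedin's sin$\Theta$ theorem to transfer the approximation from the population to the sample level.  The genuinely new ingredient compared with Lemma \ref{le:initial1:a} is that under Assumption B the parameter $\beta_1$ is effectively zero (since $\E\abs{a_1}^4 = \E\abs{a_1}^2 = 1$), so the diagonal-correction term $\mdiag(\mm_0)$ present in Algorithm \ref{alg:1} is no longer available and the diagonal bias $\md_1(\vzx)$ cannot be cancelled at the population level.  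Instead it must be treated as a perturbation of the ideal rank-one signal, and the non-peakiness hypothesis $\norminf{\vzx}^2/\norm{\vzx}^2 \le \mu \le 1/10$ is precisely what keeps this perturbation strictly below the spectral gap.

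First I would compute $\E[\mm]$ starting from the formula
\[
\E[\mm_0] \;=\; \norm{\vzx}^2\mi + \vzx\vzxh + (1-\beta_2)\vzxc\vzxt - (2-\beta_2)\md_1(\vzx) + \tfrac{\vone^\T\vxi}{m}\mi,
\]
which specializes Lemma \ref{le:e} (already used in Section \ref{sec:pf:A}) to the case $\E\abs{a_1}^4 = 1$.  Using $\real{\vzx\vzxh} = \tfrac12(\vzx\vzxh + \vzxc\vzxt)$ and substituting into the definition of $\mm$ from Algorithm \ref{alg:2}, the anti-holomorphic piece $\vzxc\vzxt$ is annihilated and the remaining coefficients collapse to give
\[
\E[\mm] \;=\; \bigl(\norm{\vzx}^2 + \tfrac{\vone^\T\vxi}{m}\bigr)\mi + \vzx\vzxh - \md_1(\vzx).
\]
Since $\norm{\vzx}=1$, the noise-free signal matrix $\norm{\vzx}^2\mi + \vzx\vzxh$ has leading eigenvector $\vzx$ with eigenvalue $2$ and all remaining eigenvalues equal to $1$, giving a spectral gap of $\norm{\vzx}^2 = 1$.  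The two perturbations $-\md_1(\vzx)$ and $\tfrac{\vone^\T\vxi}{m}\mi$ have operator norms at most $\mu$ and $\tilde{C}_{11}$ respectively; the latter is a scalar multiple of the identity and does not move eigenvectors, while the former is bounded by $\mu \le 1/10$, comfortably below the gap.

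The remaining work is the deviation bound $\norm{\mm - \E[\mm]} \lesssim \delta\beta_2$, which together with $\abs{\gamma - \norm{\vzx}^2} \lesssim \delta\beta_2$ and Wedin's sin$\Theta$ theorem yields $\dist{\vz^0,\vzx} \le \delta$.  Since $\mm$ is a fixed linear combination of $\mm_0$, $\real{\mm_0}$ and $\gamma\mi$ with coefficients of size $O(1/\beta_2)$, it suffices to bound $\norm{\mm_0 - \E[\mm_0]}$ and $\abs{\gamma - \E[\gamma]}$; the $1/\beta_2$ amplification is exactly why $\beta_2^{-2}$ appears in the sample complexity.  The matrix $\mm_0$ is the sum of the noiseless part $\tfrac{1}{m}\sum_j \abs{\va_j^\hh\vzx}^2\va_j\va_j^\hh$, a sum of sub-exponentially weighted rank-one subgaussian matrices, plus the noise piece $\tfrac{1}{m}\sum_j \xi_j\va_j\va_j^\hh$; both can be controlled by a truncation argument combined with a matrix Bernstein inequality (or equivalently by an $\ep$-net plus Bernstein argument on the unit sphere), the bounds in \eqref{cond:noise:b} driving the Bernstein parameters for the noise piece.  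The leave-one-out claim then follows by running the identical argument on $\mm^\lkh$, which differs from $\mm$ by a single rank-one term of operator norm $O(K^2\log m/m)$ with high probability.  The main obstacle I anticipate is tracking the sharp $\beta_2^{-2} K^8$ and $\log^3 m$ factors through the truncation step: the sub-exponential tails of $\abs{\va_j^\hh\vzx}^2$ force a truncation at scale $K^2\log m$, each subsequent Bernstein application introduces a $\log m$ factor, and accumulating these carefully is what ultimately produces the claimed sample complexity.
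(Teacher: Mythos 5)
Your account of the population matrix is correct and is in fact the structural point the paper glosses over: under Assumption B one indeed has $\E\mm=\bigl(\norm{\vzx}^2+\tfrac{\vone^\T\vxi}{m}\bigr)\mi+\vzx\vzxh-\md_1(\vzx)$, the conjugate-linear piece $\vzxc\vzxt$ being cancelled while the diagonal bias survives with coefficient one. The route you then propose --- concentration of $\mm_0$, $\real{\mm_0}$ and $\gamma$ with the $1/\beta_2$ amplification, Wedin's sin$\Theta$ theorem, passage to the scaled vector $\vz^0=\sqrt{\gamma}\check{\vz}^0$, and repetition for the leave-one-out matrices --- is exactly the blueprint of Lemma \ref{le:initial1:a}, which is what the paper invokes verbatim here (its own proof is omitted as ``identical'').

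The genuine gap is in your last step. Saying that $-\md_1(\vzx)$ has norm $\norminf{\vzx}^2\le\mu\le 1/10$, ``comfortably below the spectral gap,'' only guarantees that the leading eigenvector of $\E\mm$ is \emph{within $O(\mu)$} of $\vzx$; it does not make that bias vanish. Whichever reference matrix you feed into Wedin, you obtain $\dist{\check{\vz}^0,\vzx}\lesssim\delta+\mu$: either you compare $\mm$ with $\E\mm$, whose top eigenvector is not $\vzx$ (for a signal with entries of two distinct magnitudes, say half the energy on entries of size $\sqrt{\mu}$ and half on entries of size $\sqrt{\mu/2}$, first-order perturbation theory gives an eigenvector displacement of order $\mu/4$), or you compare $\mm$ with the ideal matrix $\bigl(1+\tfrac{\vone^\T\vxi}{m}\bigr)\mi+\vzx\vzxh$, in which case the perturbation contains the non-vanishing term $\md_1(\vzx)$ of norm up to $\mu$. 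In either case the $\mu$-term is a fixed constant, independent of $m$, so it cannot be driven below an arbitrary $\delta$ by the sample-size condition $m\gtrsim\beta_2^{-2}\delta^{-2}K^8n\log^3 m$; your concluding sentence ``yields $\dist{\vz^0,\vzx}\le\delta$'' therefore does not follow from the argument you set up, and for $\delta\ll\mu$ no argument of this shape can deliver it. To be fair, the paper's own treatment is silent on exactly this point: in Lemma \ref{le:initial1:a} the diagonal term is exactly cancelled by the $\mdiag(\mm_0)$ correction (unavailable when $\beta_1=0$), so ``identical'' cannot be taken literally, and your proof would at best establish the weaker bound $\dist{\vz^0,\vzx}\lesssim\delta+\mu$, which suffices for the downstream initialization requirement only when $\mu$ is small relative to the basin radius $\delta_{11}$.
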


%Similar to Lemma \ref{sec:initial of A}, the sample size needs to meet $m\ge C_{10}\beta_2^{-4} K^8 n \log^3 m$.

\begin{lemma} \label{le:initial2:b}
	With probability at least $1 - m\exp (-c_0 n) - O(m^{-10})$, one has
	\begin{align*}
		\maxm{1\le l\le m} \dist{\vz^{0,\lkh},\tilde{\vz}^0} \le C_{16} \frac{K^7\sqrtnlmmm + K^7\sqrtnlm\norminf{\vxi}}{m\beta_2}, \\
		\maxm{1\le l\le m} \norm{\tilde{\vz}^{0,\lkh}-\tilde{\vz}^0} \lesssim C_{16} \frac{K^7\sqrtnlmmm + K^7\sqrtnlm\norminf{\vxi}}{m\beta_2},
	\end{align*}
	provided that $m\ge C_{10}\beta_2^{-2} K^8 n \log^3 m$ for some sufficiently large constant $C_{10}>0$.
\end{lemma}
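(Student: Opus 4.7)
The plan is to adapt the argument used for Lemma \ref{le:initial2:a} to the spectral matrix defined in Algorithm \ref{alg:b:leave one out}, combining Wedin's $\sin\Theta$ theorem with the leave-one-out independence between $\check{\vz}^{0,\lkh}$ and $\va_l$. Subtracting the definitions of $\mm$ and $\mm^\lkh$ yields
\[
\mm - \mm^\lkh \;=\; \frac{y_l}{m\beta_2}\,\va_l\va_l^\hh \;-\; \frac{2-2\beta_2}{m\beta_2(2-\beta_2)}\,\real{y_l\va_l\va_l^\hh} \;+\; \frac{1-\beta_2}{m(2-\beta_2)}\,y_l\,\mi,
\]
so dropping the $l$-th sample perturbs $\mm$ by a Hermitian rank-one-plus-scalar matrix carrying an overall $1/\beta_2$ scaling, which is precisely the $\beta_2$ that will appear in the denominator of the target bound.

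First I would verify that both $\mm$ and $\mm^\lkh$ have a constant eigengap between their top two singular values with high probability. By the expectation computation underlying Lemma \ref{le:e} (specialized to the case where the fourth-moment parameter vanishes), the correction terms in Algorithm \ref{alg:b:leave one out} are designed so that
\[
\E\mm \;=\; \vzx\vzxh + \norm{\vzx}^2\mi + \frac{\vone^\T\vxi}{m}\,\mi,
\]
whose first-to-second eigenvalue gap equals $\norm{\vzx}^2 = 1$. Combined with standard subgaussian matrix concentration under $m \gtrsim \beta_2^{-2} K^8 n\log^3 m$, the noise condition \eqref{cond:noise:b}, and Lemma \ref{le:initial1:b} applied with a small absolute constant $\delta$, the empirical gap of $\mm$ and of $\mm^\lkh$ stays $\Omega(1)$.

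Next I would apply Wedin's $\sin\Theta$ theorem in the one-sided form
\[
\dist{\check{\vz}^{0,\lkh},\check{\vz}^0}\;\lesssim\;\norm{(\mm - \mm^\lkh)\,\check{\vz}^{0,\lkh}},
\]
and control the right-hand side using the crucial observation that $\check{\vz}^{0,\lkh}$ is a deterministic function of $\{\va_j\}_{j\ne l}$ and hence statistically independent of $\va_l$. Standard subgaussian tail bounds then give, for each $l$ with probability at least $1 - O(m^{-11})$,
\[
\abs{\va_l^\hh \check{\vz}^{0,\lkh}}\lesssim K\sqrtlm,\qquad \norm{\va_l}\lesssim K\sqrtn,\qquad \abs{y_l}\;\lesssim\; K^2\log m + \norminf{\vxi}.
\]
Plugging these estimates into the explicit expression for $\mm - \mm^\lkh$ and taking a union bound over $l\in[m]$ produces
\[
\max_{1\le l\le m}\norm{(\mm - \mm^\lkh)\,\check{\vz}^{0,\lkh}}\;\lesssim\;\frac{K^7\sqrtnlmmm + K^7\sqrtnlm\,\norminf{\vxi}}{m\beta_2},
\]
with spare powers of $K$ absorbed into the stated constant.

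Finally, to lift the bound on the unit vectors $\check{\vz}^{0,\lkh}$ and $\check{\vz}^0$ to the scaled iterates $\vz^{0,\lkh} = \sqrt{\gl}\,\check{\vz}^{0,\lkh}$ and $\vz^0 = \sqrt{\gamma}\,\check{\vz}^0$, I would use $\gamma - \gl = y_l/m$ together with the concentration of $\gamma$ around $\norm{\vzx}^2 = 1$ to show that the rescaling discrepancy $|\sqrt{\gamma}-\sqrt{\gl}|$ is strictly dominated by the Wedin estimate. This yields the first claim of the lemma; the bound on $\norm{\tilde{\vz}^{0,\lkh}-\tilde{\vz}^0}$ then follows by choosing the optimal phase realizing $\dist{\vz^{0,\lkh},\tilde{\vz}^0}$ and invoking the definition of alignment. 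I expect the most delicate step to be the eigengap verification: the $\real{\mm_0}$ correction in Algorithm \ref{alg:b:leave one out} is itself scaled by $1/\beta_2$, so its fluctuations can erode the unit gap of $\E\mm$ unless $m \gtrsim \beta_2^{-2} K^8 n\log^3 m$, which is exactly the sample complexity assumed.
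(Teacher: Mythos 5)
Your overall route coincides with the paper's: the paper proves this lemma simply by declaring its proof identical to that of Lemma \ref{le:initial2:a}, i.e., Wedin's $\sin\Theta$ theorem applied to $\mm-\mm^{(l)}$, the independence of $\check{\vz}^{0,(l)}$ from $\va_l$ together with Lemma \ref{le:order:statistics} to bound the numerator, a constant lower bound on the eigengap for the denominator, the $\abs{\sqrt{\gamma}-\sqrt{\gamma_l}}$ estimate for the rescaling step, and Lemma \ref{le:noalign} for the second display. Your expression for $\mm-\mm^{(l)}$ and your numerator estimate match that argument.

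One concrete step is wrong as stated, however. Under Assumption B the correction terms in Algorithm \ref{alg:b:leave one out} do \emph{not} yield $\E\mm=\norm{\vzx}^2\mi+\vzx\vzxh+\frac{\vone^\T\vxi}{m}\mi$: since $\beta_1=0$ here, no $\mdiag(\mm_0)$ correction is available (its coefficient in Algorithm \ref{alg:1} carries $\beta_1$ in the denominator), and a direct computation from Lemma \ref{le:e} gives $\E\mm=\norm{\vzx}^2\mi+\vzx\vzxh-\md_1(\vzx)+\frac{\vone^\T\vxi}{m}\mi$. The residual $-\md_1(\vzx)$ cannot be cancelled by design; it is exactly where the non-peakiness hypothesis enters, via $\norm{\md_1(\vzx)}=\norminf{\vzx}^2\le\mu\le 1/10$, so that by Weyl the first-to-second eigengap of $\E\mm$ (and of $\E\mm^{(l)}$) is still at least $1-2\mu$ and your $\Omega(1)$-gap conclusion survives — but the justification must cite $\mu\le 1/10$ rather than an exact cancellation "designed" into $\mm$. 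A smaller point: passing from $\dist{\vz^{0,(l)},\tilde{\vz}^0}$ to $\norm{\tilde{\vz}^{0,(l)}-\tilde{\vz}^0}$ is not merely "the definition of alignment," because $\tilde{\vz}^{0,(l)}$ and $\tilde{\vz}^0$ are each aligned to $\vzx$, not to one another; as in the paper one should first check both are within $1/4$ of $\vzx$ and then invoke Lemma \ref{le:noalign}.
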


The proofs of the two lemmas above are identical to those of Lemma \ref{le:initial1:a} and Lemma \ref{le:initial2:a}, respectively, and are thus omitted here.

Similar to Section \ref{subsec:sum of A}, one can summarize the above lemmas to arrive at the conclusion of Theorem \ref{th:B}, namely,
\begin{align*}
	\dist{\vzt,\vzx}\le (1-\frac{\beta_2 c}{8})^t C_{11}\norm{\vzx}+\frac{8\abs{\vone^\T \vxi}+C_{12}K^2\xkh{\sqrt{n} \norm{\vxi} + n \norminf{\vxi}}}{m\beta_2\norm{\vzx}}
\end{align*}
holds with probability exceeding $1-m\exp(-c_0 n) - O(m^{-10})$, provided \(m \ge CK^8n \log^3 m\).

\section{Results for the real-valued case} \label{sec:realcase}
In this section, we give a brief description  demonstrating that the results stated in Theorem \ref{th:A} and Theorem \ref{th:B} also hold in the real-valued case.  These results can be established using techniques similar to those in the complex case, so we omit the detailed proofs. Suppose the random measurement vectors $\va_j\in \R^n$  are  independent copies of a random vector $\va=(a_1,a_2,\cdots,a_m)^\T\in \R^n$,  and assume that the unknown object $\vzx\in\R^n$ obeys one of the following assumptions: \medskip
%\begin{itemize}
%	\item[Assumption C:] The entries of $\va=(a_1,a_2,\cdots,a_m)^\T \in \R^n$ are assumed to be i.i.d. subguassian random variables with $\maxm{1\le j\le m} \normsg{a_j}\le K$,  $\E a_1=0$, $\E a_1^2=1$, $\E a_1^4= 1+\beta_1$ for some $\beta_1 \in (0,+\infty)$.
%	\item[Assumption D:] The entries of $\va=(a_1,a_2,\cdots,a_m)^\T \in \R^n$ are assumed to be i.i.d. subguassian random variables with $\maxm{1\le j\le m} \normsg{a_j}\le K$,  $\E a_1=0$, $\E a_1^2 = \E a_1^4=1$. Assuming that $\vzx\in\R^n$ satisfies $\norminf{\vzx}^2/\norm{\vzx}^2\le \mu<1$ for some constant $\mu>0$.
%\end{itemize}
\paragraph{\textit{Assumption C}}\label{para:c} The entries of $\va=(a_1,a_2,\cdots,a_m)^\T \in \R^n$ are assumed to be i.i.d. subguassian random variables with $\maxm{1\le j\le m} \normsg{a_j}\le K$,  $\E a_1=0$, $\E a_1^2=1$, $\E a_1^4= 1+\beta_1$ for some $\beta_1 \in (0,+\infty)$. \medskip
\paragraph{\textit{Assumption D}} The entries of $\va=(a_1,a_2,\cdots,a_m)^\T \in \R^n$ are assumed to be i.i.d. subguassian random variables with $\maxm{1\le j\le m} \normsg{a_j}\le K$,  $\E a_1=0$, $\E a_1^2 = \E a_1^4=1$. Assume that $\vzx\in\R^n$ satisfies $\norminf{\vzx}^2/\norm{\vzx}^2\le \mu<1$ for some constant $\mu>0$. \bigskip

The algorithms for recovering the underlying signal $\vzx$ from noisy phase retrieval under each assumption are provided in Algorithm \ref{alg:3} and Algorithm \ref{alg:4}, respectively.

\begin{algorithm}[H]
	\caption{Vanilla gradient descent for phase retrieval (with Assumption C)}\label{alg:3}
	\begin{algorithmic}[0]
		\State \textbf{Input:} $\dkh{\va_j}_{1\le j\le m}$, $\dkh{y_j}_{1\le j\le m}$, $\beta_1$ and $\eta_0$.
		\State \textbf{Spectral initialization: } Let $\check{\vz}^0$ be the left singular vector for the leading singular value of \begin{equation*}
			\mm = \mm_0+\frac{2-\beta_1}{\beta_1}\mdiag\xkh{\mm_0} + \frac{\beta_1-2}{\beta_1}\gamma \mi
		\end{equation*}
		where
		\[
		\mm_0 = \frac{1}{m}\sum_{j=1}^{m} y_j\va_j\va_j^\hh \quad \mbox{and} \quad  \gamma = \frac{1}{m}\sum_{j=1}^{m} y_j.
		\]
		Set $\vz^0 = \sqrt{\gamma}\check{\vz}^0$, and $\eta = \eta_0 /\gamma$.
		\State \textbf{Gradient updates: for} $t = 0,1,2\cdots,T-1$ \textbf{do}\begin{equation*}
			\vztt = \vzt - \eta \nabla f(\vzt).
		\end{equation*}
	\end{algorithmic}
\end{algorithm}

%Without loss of generality, we assume that $\norm{\vzt-\vzx}\le\norm{\vzt+\vzx}$ for $t=0,1,\cdots$.
The convergence result for Assumption C is stated as follows.

\begin{theorem}\label{th:C}
	For any fixed $\vzx\in\R^n$, suppose that Assumption C holds and the noise $\vxi$ satisfies $\abs{\vone^\T \vxi}\le \tilde{C}_{21} m\norm{\vzx}^2,\ \norm{\vxi}\lesssim \sqrtm \norm{\vzx}^2,\ \mbox{and}\ \norminf{\vxi}\lesssim \log m\norm{\vzx}^2$. Assume that \(m \ge CK^8n \log^3 m\).
	Then with probability at least $1 - m\exp (-c_0 n)-O(m^{-10})$ the iterations in Algorithm \ref{alg:3} with  step size \(\eta = c/\norm{\vzx}^2\) obey
	%\begin{subequations}
	\begin{align*}%\label{cond:th:1:a}
		%\dist{\vz^0,\vzx}&\le \delta_1\norm{\vzx}\\
		\min\dkh{\norm{\vzt-\vzx}, \norm{\vzt+\vzx}} &\le (1-\frac{\betat c}{4})^t C_{21}\norm{\vzx}+ \frac{4\abs{\vone^\T \vxi}+C_{22}K^2\xkh{\sqrt{n} \norm{\vxi} + n \norminf{\vxi}}}{m\betat\norm{\vzx}}%\\
		%\max_{1\le j\le m}\abs{\va_j^\hh \xkh{\vzt-\vzx}}&\le C_{35}K\sqrtlm \xkh{1+\frac{\abs{\vone^\T \vxi}+ K^2\sqrtn\norm{\vxi} + K^2n\norminf{\vxi}}{m\betat\norm{\vzx}^2}}\norm{\vzx}%\\
	\end{align*}
	%\end{subequations}
	simultaneously for all $0\le t\le t_0\le m^{10}$.  Here, $\betat=\min\dkh{\beta_1, 2}$, $C_{22}>0$ is a universal constant, and
	\begin{equation*}
		C = \frac{C_{20}(1+\beta_1)^4}{\betat^4}, \quad C_{21}=\frac{C_{21}'\betat}{1+\beta_1}, \quad \tilde{C}_{21}=\frac{C_{21}'\betat^2}{8(1+\beta_1)^2}, \quad c \le \frac{1}{12+3\beta_1},
	\end{equation*}
	where \(C_{20},C_{21}'>0\) are absolute constants.
\end{theorem}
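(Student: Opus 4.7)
The plan is to follow the four-stage blueprint used for the complex-valued Theorem~\ref{th:A}: (i) spectral initialization, (ii) restricted strong convexity and smoothness in a region of incoherence and contraction, (iii) linear per-step contraction inside this region, and (iv) leave-one-out maintenance of the incoherence. The Wirtinger calculus collapses to the ordinary real derivative and the phase ambiguity is replaced by a sign ambiguity, so the $2n\times 2n$ Wirtinger Hessian reduces to an $n\times n$ real Hessian, but every lemma in Section~\ref{sec:pf:A} has a direct analogue. We sketch the three nontrivial adaptations and flag the single substantive technical point.

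First, for the spectral initialization, a direct mixed fourth-moment computation under Assumption~C yields
\begin{equation*}
\E\bigl[y_j \va_j \va_j^\T\bigr] = \norm{\vzx}^2\, \mi + 2\, \vzx \vzx^\T + (\beta_1 - 2)\, \md_1(\vzx) + \frac{\vone^\T \vxi}{m}\, \mi.
\end{equation*}
The diagonal correction $\tfrac{2-\beta_1}{\beta_1}\mdiag(\mm_0)$ and the scalar shift $\tfrac{\beta_1-2}{\beta_1}\gamma \mi$ in the definition of $\mm$ in Algorithm~\ref{alg:3} are chosen precisely so that $\E \mm = \norm{\vzx}^2 \mi + 2\,\vzx\vzx^\T + \tfrac{\vone^\T \vxi}{m}\,\mi$, whose leading eigenvector is $\vzx/\norm{\vzx}$ with spectral gap $2\norm{\vzx}^2$. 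Matrix Bernstein concentration combined with Wedin's $\sin\Theta$ theorem then delivers $\dist{\vz^0, \vzx} \le \delta$, and an identical argument applied to $\mm^{(l)}$ yields both $\dist{\vz^{0,(l)}, \vzx} \le \delta$ and the leave-one-out closeness $\dist{\vz^{0,(l)}, \vz^0} \lesssim K^7 \sqrt{n \log^3 m}/m$, mirroring Lemmas~\ref{le:initial1:a} and~\ref{le:initial2:a}.

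Second, for the RIC lemma we need the real analogue of Lemma~\ref{le:ric:a}: with high probability, $\norm{\nabla^2 f(\vz)} \le 12+3\beta_1$ and $\vu^\T \nabla^2 f(\vz)\, \vu \ge \tfrac{\betat}{4}\norm{\vu}^2$ for every $\vz$ sign-aligned and sufficiently close to $\vzx$ and every $\vu = \vz_1-\vz_2$ with $\vz_1,\vz_2$ sign-aligned near $\vzx$. The population Hessian at $\vzx$ is
\begin{equation*}
\E \nabla^2 f(\vzx) = 2\norm{\vzx}^2 \mi + 4\, \vzx \vzx^\T + 2(\beta_1-2)\, \md_1(\vzx) - \tfrac{\vone^\T \vxi}{m}\,\mi,
\end{equation*}
and writing $\vu = \vu_\parallel + \vu_\perp$ along and perpendicular to $\vzx$, together with the sharp elementary bound $\sup_{\vu\perp\vzx,\ \norm{\vu}=1} \vu^\T \md_1(\vzx) \vu \le \tfrac12 \norm{\vzx}^2$, shows the population RSC constant is at least $\betat \norm{\vzx}^2 = \min\{\beta_1,2\}\norm{\vzx}^2$ on both sectors. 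A standard subgaussian covering/concentration argument, together with the incoherence hypothesis $\max_j |\va_j^\T(\vz-\vzx)| \lesssim K\sqrt{\log m}$, converts this population bound into the claimed sample bound with only a constant loss.

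Third, the gradient update combined with smoothness and RSC produces
\begin{equation*}
\dist{\vz^{t+1},\vzx} \le \Bigl(1 - \tfrac{\betat \eta}{4}\Bigr) \dist{\vz^t,\vzx} + \eta\, \frac{\abs{\vone^\T \vxi} + C K^2 \bigl(\sqrt{n}\norm{\vxi} + n\norminf{\vxi}\bigr)}{m},
\end{equation*}
with improved contraction rate $1-\betat\eta/4$ (rather than $1-\betat\eta/8$ in the complex case) because the real Hessian is no longer the doubled Wirtinger Hessian acting on $[\vu;\bar\vu]$. To control $\max_l |\va_l^\T(\vz^t-\vzx)|$ uniformly in $t$, one runs the leave-one-out sequences $\{\vz^{t,(l)}\}$ generated by Algorithm~\ref{alg:3} on $f^{(l)}$ and performs the induction on hypotheses strictly analogous to \eqref{induct:a}; a union bound over $0 \le t \le m^{10}$ then yields the claimed linear convergence to the noise floor. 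The main technical hurdle is the RSC verification in the second step: because Assumption~C imposes no cross-moment condition analogous to the complex $\beta_2$, positivity on the orthogonal sector rests entirely on the combinatorial bound $\tfrac12 \norm{\vzx}^2$ above---this is the reason the hard value $2$ (not $1$) appears in $\betat = \min\{\beta_1, 2\}$ in the conclusion.
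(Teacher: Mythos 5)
Your proposal follows exactly the four-stage adaptation the paper prescribes in Section~\ref{sec:realcase} (spectral initialization with the modified $\mm$, RIC lemma, per-step contraction, leave-one-out induction), and the high-level structure is sound. The paper omits all details for Theorem~\ref{th:C}, so the new ingredient you must and do supply is how the restricted strong convexity survives without the complex $\beta_2$-mechanism: your observation $\sup_{\vu\perp\vzx,\ \norm{\vu}=1}\vu^\T\md_1(\vzx)\vu\le\tfrac12\norm{\vzx}^2$ is the right idea, and it is in fact just the $\vzx^\T\vw=0$ specialization of the sharper real analogue of \eqref{eq:zw21}--\eqref{eq:zw22}, namely $\sum_i z_i^{*2}w_i^2\le\tfrac12\bigl(\norm{\vzx}^2\norm{\vw}^2+(\vzx^\T\vw)^2\bigr)$, which follows from $\sum_{i>j}(z_i^*w_j+z_j^*w_i)^2\ge 0$. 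This unrestricted form is what the RIC argument actually needs (together with the $4(\vzx^\T\vw)^2$ term) for a general difference $\vw=\vz_1-\vz_2$, and it immediately yields $\vw^\T\nabla^2 F(\vzx)\vw\ge\betat\norm{\vw}^2$ with $\betat=\min\{\beta_1,2\}$; the cap at $2$ is set by the $2\norm{\vw}^2$ ``identity'' term in the population Hessian, while the $\beta_1$ branch is governed by this algebraic bound, which is what you say but somewhat loosely.

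The one genuine inconsistency is in your explanation of the $1-\betat\eta/4$ contraction factor. With your stated empirical RSC $\vu^\T\nabla^2 f(\vz)\vu\ge\tfrac{\betat}{4}\norm{\vu}^2$, running the same algebra as \eqref{eq:I143}--\eqref{i1:errorA} gives $\norm{I_1}\le(1-\tfrac{\betat\eta}{8})\norm{\vw}$ and hence the \emph{same} contraction $1-\tfrac{\betat\eta}{8}$ as Theorem~\ref{th:A}: the Wirtinger ``doubling'' already cancels in the complex derivation via $\norm{\vu}^2=2\norm{\vw}^2$, so dropping it buys nothing. To obtain the paper's stated rate $1-\tfrac{\betat\eta}{4}$ you need to carry the empirical RSC constant $\tfrac{\betat}{2}$ through, which is what the real population bound $\vw^\T\nabla^2 F(\vzx)\vw\ge\betat\norm{\vw}^2$ supports (after the usual perturbation loss of roughly one half). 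This is a constants-only correction and does not change the architecture of your argument, but the ``no doubling'' attribution should be replaced by this sharper population estimate.
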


\begin{algorithm}[H]
	\caption{Vanilla gradient descent for phase retrieval (with Assumption D)}\label{alg:4}
	\begin{algorithmic}[0]
		\State \textbf{Input:} $\dkh{\va_j}_{1\le j\le m}$, $\dkh{y_j}_{1\le j\le m}$ and $\eta_0$.
		\State \textbf{Spectral initialization: } Let $\check{\vz}^0$ be the left singular vector for the leading singular value of
		\[
		\mm = \frac{1}{m}\sum_{j=1}^{m} y_j\va_j\va_j^\hh.
		\]
		Set $\vz^0 = \sqrt{\gamma}\check{\vz}^0$, and $\eta = \eta_0 /\gamma$.
		\State \textbf{Gradient updates: for} $t = 0,1,2\cdots,T-1$ \textbf{do}\begin{equation*}
			\vztt = \vzt - \eta \nabla f(\vzt).
		\end{equation*}
	\end{algorithmic}
\end{algorithm}

The result for Assumption D is:
\begin{theorem}\label{th:D}
	For any fixed $\vzx\in\R^n$, suppose that Assumption D holds with $\mu\le 1/10$ and the noise $\vxi$ satisfies $\abs{\vone^\T \vxi}\le C_{31}m\norm{\vzx}^2/5,\ \norm{\vxi}\lesssim \sqrtm \norm{\vzx}^2$, and $\norminf{\vxi}\lesssim \log m\norm{\vzx}^2$. Assume that  \(m \ge CK^8n \log^3 m\) for some sufficiently large constant \(C>0\), then with probability at least $1 - m\exp (-c_{0} n)-O(m^{-10})$  the iterations in Algorithm \ref{alg:4} with step size \(\eta = c/\norm{\vzx}^2\) obey
	%\begin{subequations}
	\begin{align*}%\label{cond:th:1:b}
		\min\dkh{\norm{\vzt-\vzx}, \norm{\vzt+\vzx}} \le& (1-\frac{c}{4})^t C_{31}\norm{\vzx}+ \frac{4\abs{\vone^\T \vxi}+C_{32}K^2\xkh{\sqrt{n} \norm{\vxi} + n \norminf{\vxi}}}{m\norm{\vzx}}%\\
		%\max_{1\le j\le m}\abs{\va_j^\hh \xkh{\vzt-\vzx}}\le& C_{45}K\sqrtlm \xkh{1 + \frac{\abs{\vone^\T \vxi} + K^2\sqrtn \norm{\vxi} + K^2 n\norminf{\vxi} }{m\norm{\vzx}^2}}\norm{\vzx}%\\
	\end{align*}
	%\end{subequations}
	simultaneously for all $0\le t\le t_0\le m^{10}$. Here, $C, C_{31}, C_{32},c>0$ are some absolute constants, and $c\le 1/9$.
\end{theorem}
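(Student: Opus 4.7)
The argument follows the same four-stage blueprint as Theorem \ref{th:B}, adapted to real signals. In the real-valued setting the only ambiguity is a sign rather than a global phase, so the $2n$-dimensional augmented direction $\vu$ used in Lemma \ref{le:ric:b} collapses to the ordinary difference $\vu=\vz_1-\vz_2\in\R^n$, the alignment $\phi(t)$ is replaced by the sign $s(t)=\mathrm{sgn}(\vzt{}^\T\vzx)$, and the Wirtinger calculus becomes ordinary calculus. The plan is to establish: (i) a real-valued RIC lemma furnishing restricted strong convexity and smoothness of $f$ in a neighborhood of $\pm\vzx$; (ii) a one-step error-contraction lemma; (iii) the incoherence $\max_l|\va_l^\T(\vzt-\vzx)|\lesssim K\sqrt{\log m}$ via a leave-one-out sequence $\{\vztl\}_{t\ge 0}$ generated by applying Algorithm \ref{alg:4} to $f^{(l)}(\vz)=\frac{1}{2m}\sum_{j\ne l}((\va_j^\T\vz)^2-y_j)^2$; and (iv) the induction base cases via Wedin's $\sin\Theta$ theorem.

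Step (i) is the only substantive step. The Hessian is proportional to $\frac{1}{m}\sum_j[3(\va_j^\T\vz)^2-y_j]\va_j\va_j^\T$, and a direct moment computation gives, up to a noise term of size $\frac{\vone^\T\vxi}{m}\mi$,
\[
\E\,\nabla^2 f(\vzx)\;\propto\;2\|\vzx\|^2\mi+4\vzx\vzxt-4\md_1(\vzx),
\]
where the dangerous diagonal $-4\md_1(\vzx)$ arises because $\E a_1^4=1$ leaves no fourth-moment margin. The $\mu$-flatness bound $\|\md_1(\vzx)\|=\norminf{\vzx}^2\le\mu\norm{\vzx}^2$ with $\mu\le 1/10$ dominates this bad term, leaving $\vu^\T\E\,\nabla^2 f(\vzx)\vu\gtrsim\norm{\vzx}^2\norm{\vu}^2$ with a comfortable margin. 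Uniform concentration of the empirical Hessian around its mean, over the RIC and under the truncation $\max_j|\va_j^\T(\vz-\vzx)|\le\tilde C K\sqrt{\log m}$, is handled by the sub-exponential Bernstein-type argument underlying Lemma \ref{le:ric:b}; the smoothness bound $\|\nabla^2 f(\vz)\|\le C$ follows from standard matrix concentration applied to $\frac{1}{m}\sum_j y_j\va_j\va_j^\T$ and $\frac{1}{m}\sum_j(\va_j^\T\vz)^2\va_j\va_j^\T$.

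Steps (ii)--(iv) transcribe almost verbatim from Section \ref{sec:pf:B}. The contraction lemma follows from integrating the Hessian along the segment from $\vzt$ to $s(t)\vzx$ and invoking the RIC, yielding $\min_{s\in\{\pm 1\}}\|\vztt-s\vzx\|\le(1-c/4)\min_{s}\|\vzt-s\vzx\|+\eta\cdot(\text{noise})$. The leave-one-out estimates transfer with the sign alignment replacing the phase alignment, and the incoherence $\max_l|\va_l^\T(\vzt-\vzx)|\lesssim K\sqrt{\log m}$ follows by the same triangle-inequality bootstrap as in \eqref{eq:alzttrue}, using the independence of $\vztl$ from $\va_l$. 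For the base case, $\E\mm=\norm{\vzx}^2\mi+2\vzx\vzxt-2\md_1(\vzx)+\frac{\vone^\T\vxi}{m}\mi$ carries a rank-one spike $2\vzx\vzxt$ of norm $2\norm{\vzx}^2$ separated from the remaining spectrum by a gap of at least $(2-4\mu)\norm{\vzx}^2\ge(8/5)\norm{\vzx}^2$; matrix Bernstein combined with Wedin's $\sin\Theta$ theorem delivers $\min_{s\in\{\pm 1\}}\|s\vz^0-\vzx\|\le\delta$ with probability $1-m\exp(-c_0 n)-O(m^{-10})$, and the same bound applies to each $\vz^{0,(l)}$.

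The main obstacle is step (i): establishing restricted strong convexity without the fourth-moment margin. Its proof hinges on using $\mu$-flatness to tame the $\md_1(\vzx)$ term in $\E\,\nabla^2 f(\vzx)$, in exact parallel with the complex argument of Lemma \ref{le:ric:b}. Once the RIC is secured, the remainder is a routine transcription of Section \ref{sec:pf:B}, with the $\beta_2$ that appears throughout the complex case absorbed into an absolute constant of order one, and the leading error-amplification factor $1/\beta_2$ in the noise term of Theorem \ref{th:B} replaced by $O(1)$.
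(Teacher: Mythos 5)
Your plan is correct and coincides with the route the paper itself intends: the paper omits the proof of Theorem \ref{th:D}, stating it follows by the same techniques as the complex case, and your four stages (real-valued RIC with the $\mu$-flatness bound $\|\md_1(\vzx)\|\le\mu\|\vzx\|^2$ absorbing the diagonal deficit, one-step contraction, leave-one-out incoherence, and Wedin-based initialization for the base case) are exactly the real-valued transcription of Lemmas \ref{le:ric:b}--\ref{le:initial2:b}, with the sign alignment replacing the phase and the $\beta_2$-dependence disappearing because conjugation is trivial over $\R$. In particular, your key computation $\E\nabla^2 f(\vzx)\propto 2\|\vzx\|^2\mi+4\vzx\vzxt-4\md_1(\vzx)$ and its domination via $\mu\le 1/10$ is the same mechanism used in the proof of Lemma \ref{le:subric1:b}, so no further comparison is needed.
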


%The proofs of Theorem \ref{th:C} and Theorem \ref{th:D} are  similar to those of Theorem \ref{th:A} and Theorem \ref{th:B}, respectively, which we omit here.

%%%%%%%%%%%%%%%%%%%%%%%%%%%%%%%%%%%%%%%%%%%%%%
%% Analysis for Assumption B:               %%
%%%%%%%%%%%%%%%%%%%%%%%%%%%%%%%%%%%%%%%%%%%%%%

\section{Numerical Experiments}\label{sec:experiments}
In this section, a series of numerical experiments are conducted to validate the effectiveness of our results.  Several measurement models are considered:
\begin{itemize}
\item[(1)] {\bf Uniform Distribution}: Each element of $\va_j \in \C^n$ is drawn i.i.d. from a uniform distribution, specifically,  $a_{j,k} \sim \frac{\sqrt{6}}{2}\mathcal{U}[-1,1]+ i \frac{\sqrt{6}}{2}\mathcal{U}[-1,1]$ for all $j=1,\ldots,m$ and $k=1,\ldots,n$.
\item[(2)] {\bf Discrete Uniform Distribution}:   The entries of $\va_j \in \C^n$  follow a discrete uniform distribution:
\begin{equation*}
	a_{j,k}=\left\{ \begin{aligned}
		+1&\quad \rm{with\ prob.}\ 1/4\\
		-1&\quad \rm{with\ prob.}\ 1/4\\
		+i&\quad \rm{with\ prob.}\ 1/4\\
		-i&\quad \rm{with\ prob.}\ 1/4.
	\end{aligned}\right.
\end{equation*}
\item[(3)] {\bf Gaussian Distribution}:  the measurement vectors $\va_j \sim \frac{1}{\sqrttwo}\mathcal{N}\xkh{\mzero,\mi_n}+i\frac{1}{\sqrttwo}\mathcal{N}\xkh{\mzero,\mi_n}$.
\end{itemize}
The underlying signal $\vzx$ is generated according to the Gaussian distribution, and the noise vector is set to be $\xi_j\stackrel{\text{i.i.d}}{\sim}\mathcal{N}\xkh{0,\sigma^2}$. To evaluate the performance of algorithms, we examine the relative error, defined as
\[
\mbox{Relative error}:=\frac{\dist{\vzt,\vzx}}{\norm{\vzx}},
\]
where $\dist{\vzt,\vzx} = \minm{\phi\in\R} \norm{\e^{i\phi}\vzt-\vzx}$.
%	 An algorithm is  considered successful in reconstructing the underlying signal if the relative error is less than $10^{-5}$.

The first experiment  aims to report that our algorithms can recover the underlying signal $\vzx$  exactly when the measurements $y_j$ are noise-free.
Two sets of experiments are conducted here for the aforementioned scenarios where the observation vectors follow the uniform distribution and the discrete distribution, respectively. We set $n\in\{100, 200, 500, 1000\}$, with $m = 10n$, and the maximum number of iterations is $1000$. The step size  is set to be $\eta = \eta_0/\xkh{\sum_j y_j/m}$ with $\eta_0=0.2$. Figure \ref{S1} illustrates the relative error versus the number of iterations, indicating that our algorithms converge linearly to the underlying signal.
%\clearpage
%\vspace{-0.2cm}

\begin{figure}[t]
	\setlength\tabcolsep{1pt}
	\centering
	\begin{tabular}{cc}
		\includegraphics[width=0.49\textwidth]{./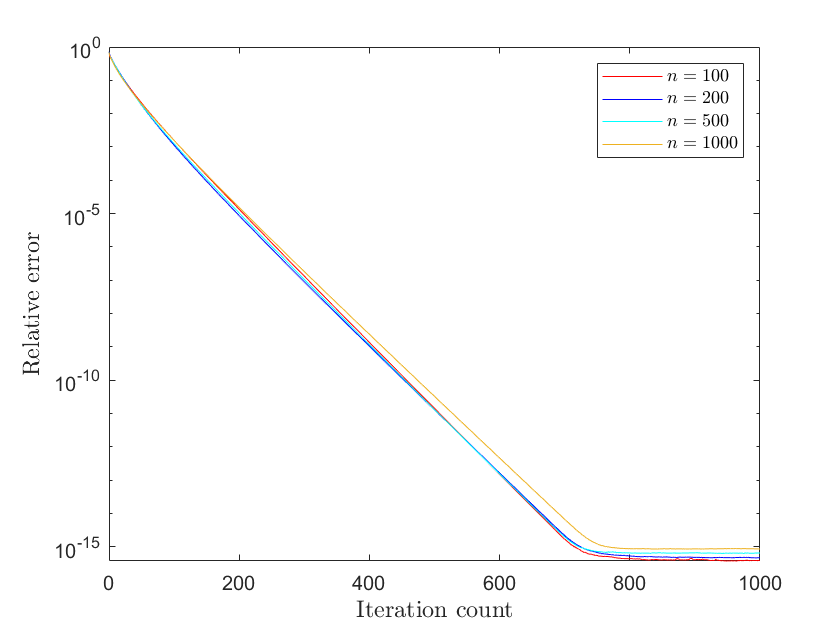}
		&\includegraphics[width=0.49\textwidth]{./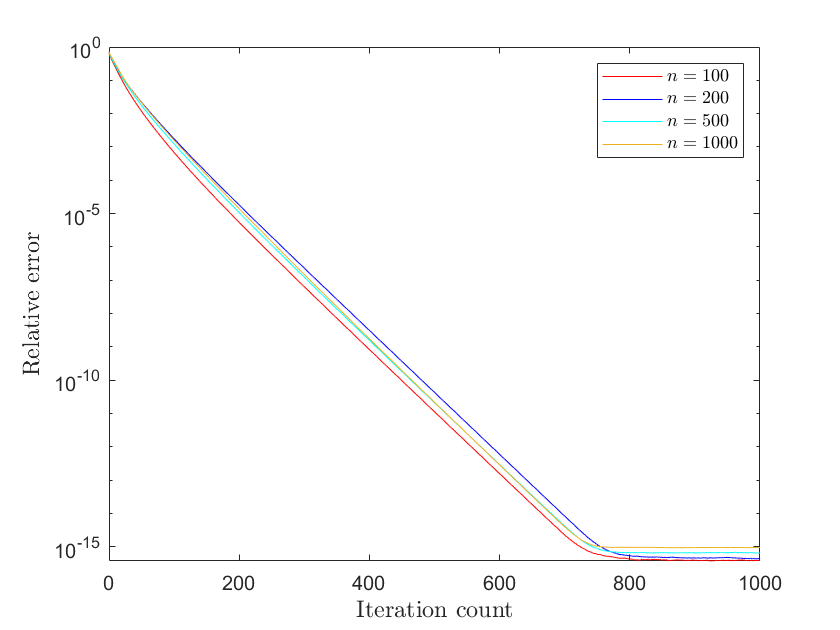}\\
		{\footnotesize (a) The uniform distribution scenario.}&{\footnotesize (b) The discrete distribution scenario.}
	\end{tabular}
	\caption{Convergence of WF with noise-free measurements.}
	\label{S1}
\end{figure}

%\begin{figure}[htbp]
%	\centering
%	\begin{subfigure}{0.45\linewidth}
%		\centering
%		\includegraphics[width=1\linewidth]{Pic/S1_1.png}
%		\caption{}
%		\label{S1_1}%文中引用该图片代号
%	\end{subfigure}
%	%\centering
%	\hspace{-2mm}
%	\begin{subfigure}{0.45\linewidth}
%		\centering
%		\includegraphics[width=1\linewidth]{Pic/S1_2.png}
%		\caption{}
%		\label{S1_2}%文中引用该图片代号
%	\end{subfigure}
%	\caption{(a): the uniform distribution scenario. (b): the discrete uniform distribution scenario.}
%	\label{S1}
%\end{figure}

%\vspace{-0.2cm}
%\begin{figure}[htbp]
%	\centering
%	\begin{subfigure}{0.45\linewidth}
%		\centering
%		\includegraphics[width=1\linewidth]{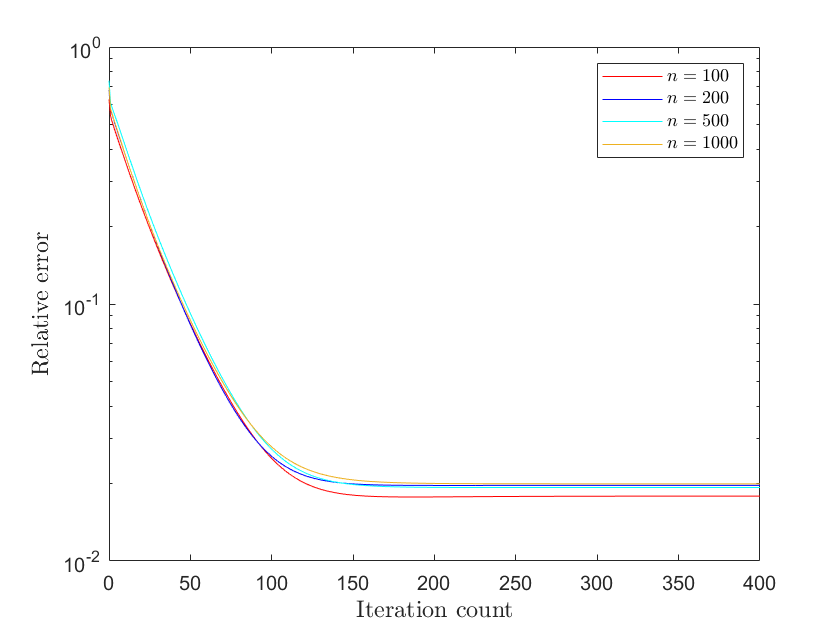}
%		\caption{}
%		\label{S2_1}%文中引用该图片代号
%	\end{subfigure}
%	%\centering
%	\hspace{-2mm}
%	\begin{subfigure}{0.45\linewidth}
%		\centering
%		\includegraphics[width=1\linewidth]{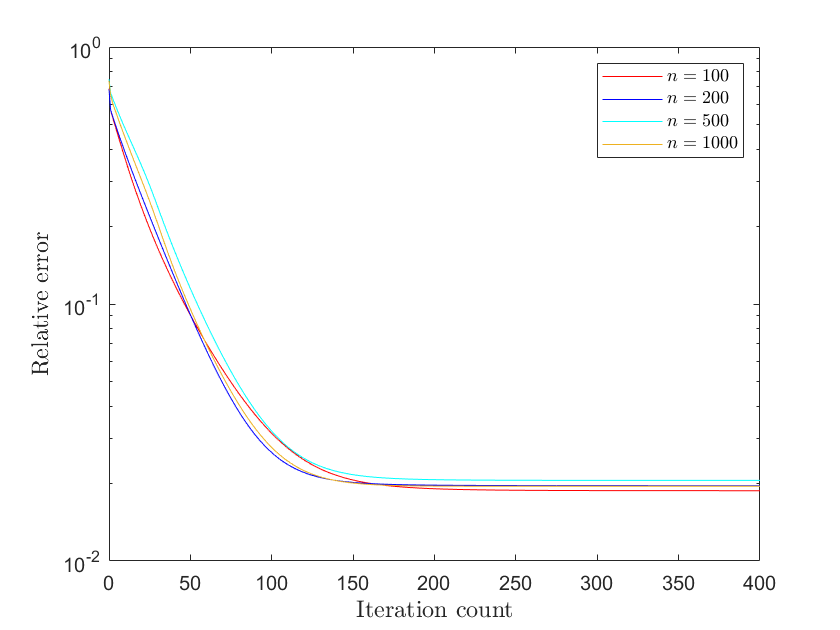}
%		\caption{}
%		\label{S2_2}%文中引用该图片代号
%	\end{subfigure}
%	\caption{(a): the uniform distribution scenario. (b): the discrete uniform distribution scenario.}
%	\label{S2}
%\end{figure}

The second experiment reports the robustness of our algorithms in the presence of noise. We again choose $n\in\{100,200,500,1000\}$, with $m = 10n$, and a fixed noise level of $\sigma=0.05\norm{\vzx}^2$. The step size  is set to be $\eta = \eta_0/\xkh{\sum_j y_j/m}$ with $\eta_0=0.1$.
%	 Numerical results suggest that good convergence behavior can tolerate a noise level about $\sigma=0.5\norm{\vzx}^2$, but excessive noise leads to significant relative errors, rendering our recovery meaningless.
Figure \ref{S2} demonstrates that our algorithms exhibit good performance in the presence of noise, typically converging in approximately 200 iterations.

\begin{figure}[tbhp]


	\setlength\tabcolsep{1pt}
	\centering
	\begin{tabular}{cc}
		\includegraphics[width=0.49\textwidth]{Pic/S2_1.png}
		&\includegraphics[width=0.49\textwidth]{Pic/S2_2.png}\\
		{\footnotesize (a) The uniform distribution scenario.}&{\footnotesize (b) The discrete distribution scenario.}
	\end{tabular}
	\caption{Convergence of WF for Gaussian noise with $\sigma=0.05\norm{\vzx}^2$.}
	\label{S2}
\end{figure}

The third experiment aims to report the success rate under  different distributions. For complex-valued case, we use the complex Gaussian distribution and the complex uniform distribution. And for real-valued case, each row $a_{j,k}$ of the measurement vectors $\va_j$ are independently generated following either the Gaussian distribution $\mathcal{N}\xkh{\mzero,\mi_n}$ or a uniform distribution $\sqrt{3}\, \mathcal{U}[-1,1]$.  Then we set \( n = 1000 \), \( m/n = \dkh{5,6,\cdots,12} \), and utilize Gaussian noise with $\sigma=0.05\norm{\vzx}^2$.  The step size  is set to be $\eta = \eta_0/\xkh{\sum_j y_j/m}$ with $\eta_0=0.1$.  The maximum number of iterations is set to be $400$. The success rate is calculated based on $100$ trials. In the presence of noise with a noise level of $\sigma=0.05\norm{\vzx}^2$, we say a trial has successfully reconstructed the target signal if the relative error is less than $0.1$.  As depicted in Figure \ref{S3}, it is observed that when $m\ge 12n$, the success rate almost reaches  $100\%$. For both complex and real scenarios, when the ratio \(m/n\) falls within the range of 6 to 9, the success rate associated with our uniform distribution is approximately 2\% to 9\% higher compared to that of the Gaussian distribution. We list the differences in relevant parameters in Table \ref{tab:para of distr}, and for the sake of simplicity, we only consider the case in the real number field. This indicates that in this experiment, the influence generated by the subgaussian norm $K$ is stronger than that caused by parameter $\beta_1$. %The differences in $\betat$ lead to variations in success rates under the same number of measurements, validating the analysis in \eqref{rm:parameter}.
This difference in success rate validates the analysis in Remark \ref{rm:parameter}.

\begin{figure}[t]
	\setlength\tabcolsep{1pt}
	\centering
	\begin{tabular}{cc}
		\includegraphics[width=0.49\textwidth]{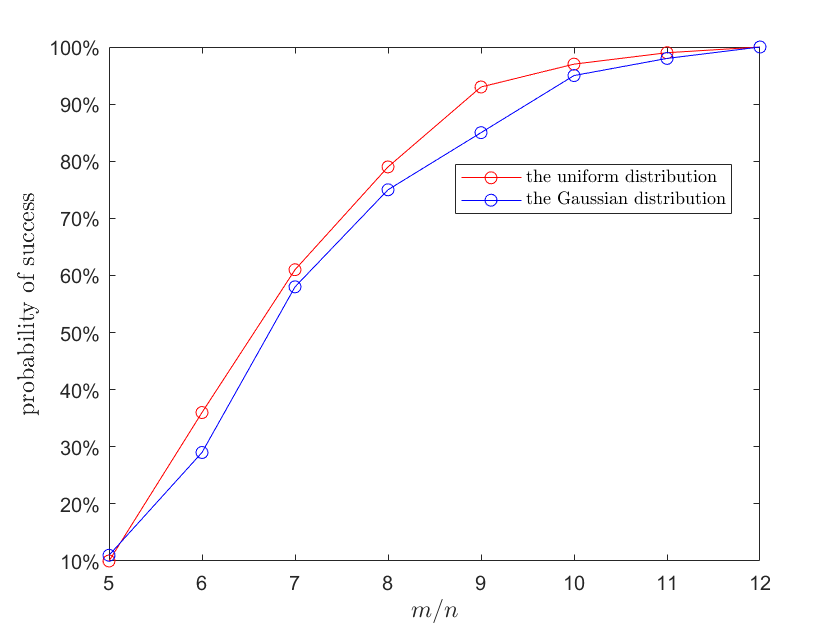}
		&\includegraphics[width=0.49\textwidth]{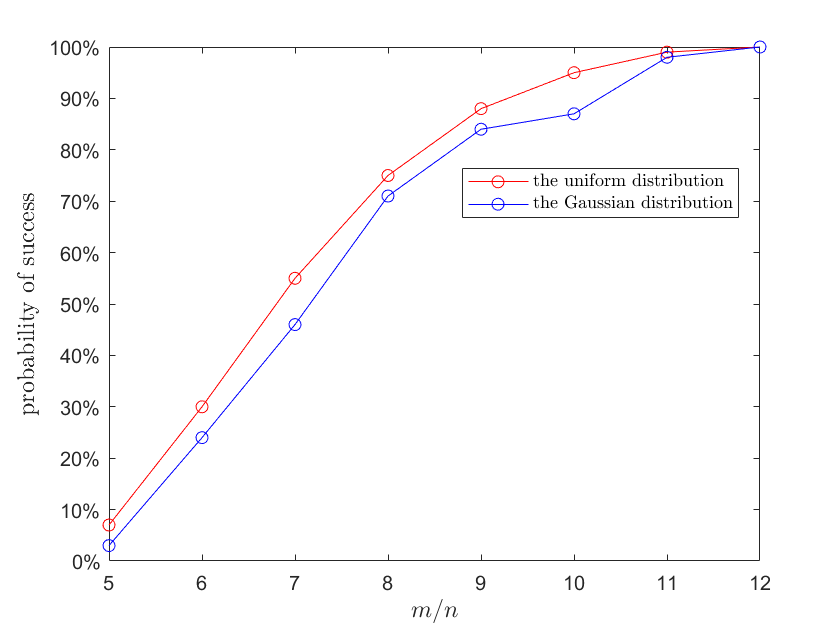}\\
		{\footnotesize (a) The complex-valued case.}&{\footnotesize (b) The real-valued case.}
	\end{tabular}
	\caption{The plot of success rates of our uniform and discrete measurements in real and complex fields in relation to $m/n$.}
	\label{S3}
\end{figure}

\begin{table}[b]
	\footnotesize
	\caption{The parameters of two distributions in real-valued case.}\label{tab:para of distr}
	\begin{center}
		\begin{tabular}{|c|c|c|}\hline
			Species& $\beta_1$ & $K$ \\\hline
			\bf the Gauss distribution&2& $\sqrt{8/3}$\\
			\bf the uniform distribution&4/5& less than $\sqrt{2.5}$\\\hline
		\end{tabular}
	\end{center}
\end{table}

%	\begin{figure}[htbp]
	%		\centering
	%		\includegraphics[width=0.5\linewidth]{}
	%		\caption{50 independent experiments are conducted for each setting.}
	%		\label{S3}
	%	\end{figure}

\section{Discussion}\label{sec:discussion}
This paper addresses the complex phase retrieval problem from subgaussian measurements, considering scenarios where the measurements either obey a fourth-moment condition or the target signals exhibit non-peakiness. Under each assumption, by employing the leave-one-out arguments, we demonstrate that the Wirtinger flow method,  with a modified spectral initialization, converges linearly to the target signals with high probability, provided the sampling complexity   \(m\ge O(n \log^3 m)\). Our results hold for arbitrary noise, with subexponential noise representing a specific instance.

Several intriguing issues require investigation in future research. Firstly, due to the utilization of Bernstein's inequality, the sampling complexity is \(O(n \log^3 m)\). It would be valuable to investigate whether more potent statistical tools exist to reduce the  sampling complexity $O(n\log m)$ or $O(n)$. Secondly, the assumption of subgaussian measurement vectors could be extended to diverse problem domains such as low-rank matrix recovery, blind deconvolution, the coded diffraction pattern (CDP) model, and beyond.

\section{Appendix A: Proofs of Technical Results}

\subsection{The expectations}
%---------------------Lemma A.0
\begin{lemma}\label{le:e}
	Suppose that  $\va_1,...,\va_m \in \C^n$ are i.i.d. subguassian random vectors with $\E a_1=0$, $\E\abs{a_1}^2=1$, $\E\abs{a_1}^4= 1+\beta_1$ and $\abs{\E a_1^2}^2 = 1-\beta_2$ for some $\beta_1\ge 0$ and $0<\beta_2\le 1$. For the function $f(\vz):\C^n\rightarrow\R$ defined in \eqref{def:f}, we have
	\begin{align*}
		\E \nabla f_{\vz}(\vz) =& \xkh{2\norm{\vz}^2-\norm{\vzx}^2}\vz-\vzxh \vz \vzx+\xkh{1-\beta_2}\xkh{\bar{\vz} \vz^\T-\vzxc\vzxt}\vz\\
		&-\xkh{2-\beta_1-\beta_2}\xkh{\md_1(\vz)-\md_1(\vzx)}\vz-\frac{\vone^\T \vxi}{m}\vz,    \\
		\E \nabla^2 f(\vz) =& \begin{bmatrix}
			\ma & \mb \\ \mb^\hh & \bar{\ma}
		\end{bmatrix},
	\end{align*}
	where
	\begin{align*}
		\ma =& \xkh{2\norm{\vz}^2-\norm{\vzx}^2}\mi + \vz\vz^\hh - \vzx\vzxh+\xkh{1-\beta_2}\xkh{2\bar{\vz}\vz^\T-\vzxc\vzxt}  \\
		& - \xkh{2-\beta_1-\beta_2}\xkh{2\md_1(\vz)-\md_1(\vzx)}-\frac{\vone^\T \vxi}{m}\mi,   \\
		\mb =& 2\vz\vz^\T + (1-\beta_2)\xkh{\vz^\T \vz}\mi - \xkh{2-\beta_1-\beta_2}\md_2(\vz).
	\end{align*}
	Here, $\md_1(\vz), \md_2(\vz)$ are defined in \eqref{def:md}.
\end{lemma}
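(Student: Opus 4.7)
The plan is to reduce all the required expectations to two canonical fourth-order entrywise moments and then carry out straightforward bookkeeping. Since the $\va_j$ are i.i.d., the empirical averages collapse to single-vector expectations against a generic $\va=(a_1,\dots,a_n)^\T$, and the fixed noise vector enters only through $\frac{1}{m}\sum_j\xi_j\E[\va\va^\hh\vz]=\frac{\vone^\T\vxi}{m}\vz$ for the gradient and $\frac{\vone^\T\vxi}{m}\mi$ for the Hessian's diagonal blocks, because $\E a_1=0$ and $\E|a_1|^2=1$ force $\E\va\va^\hh=\mi$.

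The heart of the computation is the scalar identity
\begin{equation*}
\E[a_i\bar a_j \bar a_k a_l] = \delta_{ij}\delta_{kl} + \delta_{ik}\delta_{jl} + (1-\beta_2)\delta_{il}\delta_{jk} + (\beta_1+\beta_2-2)\delta_{ijkl},
\end{equation*}
which I would establish by a short case analysis: since the $a_p$ are independent and mean-zero, the expectation vanishes unless the four indices partition into matched pairs; the three admissible pairings contribute $1$, $1$, and $|\E a_1^2|^2=1-\beta_2$ respectively, while the $\delta_{ijkl}$ correction is the unique constant that reconciles the all-indices-equal case with $\E|a_1|^4=1+\beta_1$. Inserting this into the expansion $|\va^\hh\vw|^2=\sum_{k,l}w_k\bar w_l\bar a_k a_l$ against $(\va\va^\hh)_{ij}=a_i\bar a_j$ and translating back to matrix form yields
\begin{equation*}
\E\bigl[\va\va^\hh|\va^\hh\vw|^2\bigr] = \norm{\vw}^2\mi + \vw\vw^\hh + (1-\beta_2)\bar{\vw}\vw^\T - (2-\beta_1-\beta_2)\md_1(\vw).
\end{equation*}

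A parallel argument with the companion moment $\E[a_ia_j\bar a_k\bar a_l]$ (the same enumeration, but now the $\delta_{ij}\delta_{kl}$ pairing carries the $1-\beta_2$ factor) produces
\begin{equation*}
\E\bigl[(\va^\hh\vw)^2\va\va^\T\bigr] = 2\vw\vw^\T + (1-\beta_2)(\vw^\T\vw)\mi - (2-\beta_1-\beta_2)\md_2(\vw),
\end{equation*}
which is precisely the claimed $\mb$ at $\vw=\vz$. With these two building blocks in hand, I would finish by direct substitution: the gradient formula follows from applying the first identity at $\vw=\vz$ and $\vw=\vzx$ inside $\E\nabla f_\vz(\vz)=\E[|\va^\hh\vz|^2\va\va^\hh\vz]-\E[(|\va^\hh\vzx|^2+\xi)\va\va^\hh\vz]$ and right-multiplying by $\vz$; the Hessian is assembled block-by-block from $2\E[\va\va^\hh|\va^\hh\vz|^2]-\E[y\,\va\va^\hh]$ and $\E[(\va^\hh\vz)^2\va\va^\T]$, with the lower blocks obtained by entrywise conjugation (using that $\mb$ is symmetric, so $\bar{\mb}=\mb^\hh$). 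No concentration or estimation enters at any stage; the only real obstacle is maintaining clean conjugation bookkeeping across the index enumeration, which is handled most cleanly by working entirely at the level of index entries and switching back to matrix notation only at the very end.
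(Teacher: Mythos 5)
Your proposal is correct and amounts to the same underlying computation as the paper: both hinge on evaluating the two fourth-order matrix moments $\E[\,|\va^\hh\vw|^2\va\va^\hh]$ and $\E[(\va^\hh\vw)^2\va\va^\T]$ entrywise. The only genuine differences are presentational. You package the entrywise moments into the Kronecker identity $\E[a_i\bar a_j\bar a_k a_l]=\delta_{ij}\delta_{kl}+\delta_{ik}\delta_{jl}+(1-\beta_2)\delta_{il}\delta_{jk}+(\beta_1+\beta_2-2)\delta_{ijkl}$ (and its companion with the $(1-\beta_2)$ factor on the $\delta_{ij}\delta_{kl}$ pairing), whereas the paper enumerates the diagonal and off-diagonal entries case by case; these are the same enumeration in different notation. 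You also assemble $\E\nabla f_\vz$ and $\E\nabla^2 f$ directly from those two identities plus $\E[\va\va^\hh]=\mi$ for the noise term, whereas the paper first assembles $\E f(\vz)$ and then takes Wirtinger derivatives. Both orders are legitimate since expectation (over $\va$) and differentiation (in $\vz$) commute here, and your direct route is if anything a bit shorter.

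One thing worth flagging: carrying out your block-by-block assembly of the $(1,1)$ block gives
\begin{equation*}
\E\bigl[(2|\va^\hh\vz|^2-y)\,\va\va^\hh\bigr]
= (2\norm{\vz}^2-\norm{\vzx}^2)\mi + 2\vz\vz^\hh - \vzx\vzxh
+ (1-\beta_2)\bigl(2\bar\vz\vz^\T-\vzxc\vzxt\bigr)
- (2-\beta_1-\beta_2)\bigl(2\md_1(\vz)-\md_1(\vzx)\bigr) - \tfrac{\vone^\T\vxi}{m}\mi,
\end{equation*}
with a factor $2$ on $\vz\vz^\hh$, whereas the lemma as printed has $\vz\vz^\hh$. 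The $2$ is correct (it also follows from differentiating the paper's $\E f$, and it is consistent with the expression $\ma_1=\norm{\vzx}^2\mi+\vzx\vzxh+\cdots$ the paper actually uses downstream at $\vz=\vzx$); so the printed statement has a small typo that your derivation would surface. This does not affect the validity of your argument.
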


\begin{proof}
        From definition  \eqref{def:f},  one can observe that
	\begin{align*}
		f(\vz) =& \frac{1}{2m} \sum_{j=1}^{m} \xkh{\aabs{\va_j^\hh \vzx}^2 - \aabs{\va_j^\hh \vz}^2 - \xi_j}^2\\
		       =& \frac{1}{2m} \sum_{j=1}^{m} \left[\vz^\hh \xkh{\aabs{\va_j^\hh \vz}^2 \va_j\va_j^\hh-2\aabs{\va_j^\hh \vzx}^2 \va_j\va_j^\hh} \vz + \vzxh \xkh{\aabs{\va_j^\hh \vzx}^2 \va_j \va_j^\hh} \vzx\right.\\
		       &\left.+\xi_j^2 - 2 \xi_j\xkh{\aabs{\va_j^\hh \vzx}^2-\aabs{\va_j^\hh \vz}^2}\right].
	\end{align*}
To obtain the expression of $\E f(\vz)$,  it suffices to compute $\E \abs{\va_1^\hh \vz}^2\va_1\va_1^\hh$, $\E \xkh{\va_1^\hh \vz}^2\va_1\va_1^\T$ and $\E \sum_{i = 1}^{m}\xi_i \va_i\va_i^\hh$.  For the first term $\E \abs{\va_1^\hh \vz}^2\va_1\va_1^\hh$, one has
\[
\E \abs{\va_1^\hh \vz}^2 a_{1j}\bar{a}_{1j} = \E \zkh{\abs{a_{1j}}^4\abs{z_j}^2 + \sum_{k\ne j}\abs{a_{1j}}^2\abs{a_{1j}}^2\abs{z_j}^2 }=  \norm{\vz}^2 + \xkh{\E \abs{a_{11}}^4-1} \abs{z_j}^2
\]
and
\[
\E \abs{\va_1^\hh \vz}^2 a_{1j}\bar{a}_{1k} = \E \zkh{\abs{a_{1j}}^2\abs{a_{1k}}^2 z_j\bar{z}_k + a_{1j}^2 \bar{a}_{1k}^2 z_k\bar{z}_j} = z_j \bar{z}_k + \abs{\E a_{11}^2}^2\bar{z}_j z_k
\]
for $j\ne k$. Therefore, it holds
\begin{eqnarray*}
\E \abs{\va_1^\hh \vz}^2\va_1\va_1^\hh &=& \norm{\vz}^2\mi + \xkh{\E \abs{a_{11}}^4-1}\md_1(\vz)+\vz\vz^\hh-\md_1(\vz)+\abs{\E a_{11}^2}^2\xkh{\bar{\vz}\vz^\T-\md_1(\vz)}\\
			& =&\norm{\vz}^2\mi + \vz\vz^\hh + \xkh{1-\beta_2}\bar{\vz}\vz^\T - \xkh{2-\beta_1-\beta_2}\md_1(\vz).
\end{eqnarray*}
For the second term $\E \xkh{\va_1^\hh \vz}^2\va_1\va_1^\hh$,  we see that
\[
			\E \xkh{\va_1^\hh \vz}^2 a_{1j}^2 = \E \zkh{\abs{a_{1j}}^4z_j^2 + \sum_{k\ne j}a_j^2 \bar{a}_k^2 z_k^2 }= (\abs{\E a_{11}^2}^2)\xkh{\vz^\T \vz} + \xkh{\E \abs{a_{11}}^4-\abs{\E a_{11}^2}^2}z_j^2
			\]
			and
			\[
			\E \xkh{\va_1^\hh \vz}^2 a_{1j}a_{1k} = 2 \E \abs{a_{1j}}^2\abs{a_{1k}}^2z_jz_k
			= 2z_j z_k
			\]
		for $j\ne k$. This reveals that
		\begin{align*}
\E \xkh{\va_1^\hh \vz}^2\va_1\va_1^\T  =& (\abs{\E a_{11}^2}^2)\xkh{\vz^\T \vz}\mi + \xkh{\E \abs{a_{11}}^4-\abs{\E a_{11}^2}^2}\md_2(\vz)+2\xkh{\vz\vz^\T-\md_2(\vz)}\\
			=& 2\vz\vz^\T + (1-\beta_2)\xkh{\vz^\T \vz}\mi - \xkh{2-\beta_1-\beta_2}\md_2(\vz).
		\end{align*}
For the third term, it is easy to check
		\begin{equation*}
			\E \sum_{i = 1}^{m}\xi_i \va_i\va_i^\hh = \frac{\sum_{i=1}^{m}\xi_i}{m} = \frac{\vone^\T \vxi}{m}.
		\end{equation*}
Gathering the above results, we obtain
	\begin{align*}
		\E f(\vz) =& \norm{\vz}^4+\norm{\vzx}^4-\norm{\vz}^2\norm{\vzx}^2-\abs{\vzxh\vz}^2+\frac{\vone^\T \vxi}{m}\xkh{\norm{\vzx}^2-\norm{\vz}^2}+\frac{\norm{\vxi}^2}{2m}\\
		&+\frac{1-\beta_2}{2}\xkh{\abs{\vz^\T\vz}^2+\abs{\vzxt\vzx}^2-2\abs{\vzxt\vz}^2}\\
		&-\frac{2-\beta_1-\beta_2}{2m} \sum_{j=1}^m \xkh{\abs{z_j}^4+\abs{z_j^*}^4-2\abs{z_j}^2\abs{z_j^*}^2}.
	\end{align*}
Finally, $\E \nabla f_{\vz}(\vz)$ and $\E \nabla^2 f(\vz)$ can be calculated by taking Wirtinger derivative with respect to $\E f(\vz)$.
\end{proof}

\subsection{Technical lemmas}
%---------------------Lemma A.0
\begin{lemma}  \cite[Theorem 3.1.1]{Vershynin2018} \label{le:x2}
	Let $X=(x_1,\ldots, x_n) \in \C^n$ be a random vector with independent, subgaussian coordinates $x_j$ that satisfy $\E|x_j|^2=1$, then
	\[
	\PP\dkh{\abs{\norm{X}-\sqrt n}\ge t} \le 2\exp(-ct^2/K^4)
	\]
	where $K=\max_{1\le j\le n} \norms{x_j}_{\psi_2}$.
\end{lemma}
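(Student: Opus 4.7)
My plan is to reduce the concentration of the norm to the concentration of the squared norm, which is a sum of independent subexponential random variables, and then apply a Bernstein-type inequality. The key algebraic identity is
\[
\bigl| \|X\| - \sqrt{n} \bigr| \cdot \bigl( \|X\| + \sqrt{n} \bigr) \;=\; \bigl| \|X\|^2 - n \bigr|,
\]
so that $\bigl|\|X\|-\sqrt n\bigr|\ge t$ forces $\bigl|\|X\|^2-n\bigr|\ge t\sqrt n$ (the easy direction) and, conversely, $\bigl|\|X\|^2-n\bigr|\ge \max\{t\sqrt{n}, t^2\}$ forces $\bigl|\|X\|-\sqrt{n}\bigr|\ge t$. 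Hence it suffices to control the tails of $\|X\|^2 - n = \sum_{j=1}^n (|x_j|^2 - 1)$.

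The first step is to verify that each centered summand $|x_j|^2 - 1$ is subexponential with $\psi_1$-norm at most $CK^2$ for some absolute constant $C$. This follows because $\||x_j|^2\|_{\psi_1}\le \|x_j\|_{\psi_2}^2 \le K^2$ (a standard product bound noted in the preliminaries just above the lemma), and centering costs only a universal constant factor. The second step is to invoke Bernstein's inequality for independent, mean-zero subexponential variables, which gives
\[
\PP\bigl\{ \bigl|\|X\|^2 - n\bigr| \ge u \bigr\} \;\le\; 2\exp\Bigl(-c\, \min\Bigl\{\tfrac{u^2}{nK^4},\, \tfrac{u}{K^2}\Bigr\}\Bigr)
\]
for every $u\ge 0$ and an absolute constant $c>0$.

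The final step is to specialize this with $u=\max\{t\sqrt{n},\, t^2\}$. A short case analysis shows that in both regimes the exponent reduces to $-c\, t^2/K^4$: when $t\le \sqrt{n}$ we take $u=t\sqrt{n}$, giving $u^2/(nK^4)=t^2/K^4$; when $t>\sqrt{n}$ we take $u=t^2$, giving $u/K^2=t^2/K^2\ge t^2/K^4$ (assuming $K\ge 1$, which is harmless up to rescaling the constant $c$). In either case we obtain the advertised bound $2\exp(-ct^2/K^4)$.

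The main (and essentially only) obstacle is keeping track of the two-regime nature of Bernstein's inequality and verifying that the reduction from $\|X\|$ to $\|X\|^2$ does not lose anything in either regime; this is precisely what the max-of-two-terms choice for $u$ handles. Everything else is bookkeeping: the subexponential norm bound $\||x_j|^2-1\|_{\psi_1}\lesssim K^2$ and the application of Bernstein are standard.
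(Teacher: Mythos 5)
Your argument is correct and is essentially a reproduction of the proof of Theorem 3.1.1 in Vershynin's book, which is exactly what the paper cites (the paper gives no independent proof). The reduction from $\norm{X}$ to $\norm{X}^2$ via $\bigl|\norm{X}-\sqrt n\bigr|\ge t \Rightarrow \bigl|\norm{X}^2-n\bigr|\ge \max\{t\sqrt n,\,t^2\}$, followed by Bernstein's inequality for the subexponential summands $|x_j|^2-1$, is the standard route; your observation that $K\gtrsim 1$ (which follows from $\E|x_j|^2=1$) licenses the replacement of $K^2$ by $K^4$ in the second Bernstein regime.
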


%---------------------Lemma A.1
\begin{lemma} \cite[Theorem 4.6.1]{Vershynin2018} \label{le:aat}
	Let $\va_1,\ldots,\va_m$ be the independent, mean zero, subgaussian isotropic random vectors  in $\C^n$ with parameter $K>0$. Then for any $t\ge 0$, with probability at least $1-2\exp(-t^2)$, it holds
	\[
	\norm{\frac1m \sum_{j=1}^m \va_j\va_j^\hh - \mi} \le K^2 \max\xkh{\delta,\delta^2},
	\]
	where $\delta=C\xkh{\sqrt{\frac nm}+\frac t{\sqrt m}}$.
	
\end{lemma}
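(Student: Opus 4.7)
The plan is to follow the standard covering/Bernstein strategy for sample covariance matrices. Write $\mm := \frac1m\sum_{j=1}^m \va_j\va_j^\hh - \mi$, which is Hermitian, so $\norm{\mm} = \sup_{\vx\in S^{2n-1}}\abs{\vx^\hh \mm \vx}$ where the supremum is over unit vectors in $\C^n$ (viewed as a $2n$-real-dimensional sphere). First I would fix an $\ep$-net $\mathcal{N}_\ep$ of $S^{2n-1}$ with $\ep = 1/4$, whose cardinality satisfies $\abs{\mathcal{N}_\ep}\le 9^{2n}$ by the standard volume bound, and use the comparison $\norm{\mm}\le 2\sup_{\vx\in\mathcal{N}_\ep}\abs{\vx^\hh \mm \vx}$ (this comparison holds because $\mm$ is Hermitian; for any $\vy$ of unit norm, pick $\vx\in\mathcal{N}_\ep$ with $\norm{\vy-\vx}\le 1/4$ and expand).

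Next, for each fixed unit vector $\vx$, I would apply a one-dimensional tail bound to
\[
	\vx^\hh \mm \vx = \frac1m\sum_{j=1}^m\abs{\nj{\va_j,\vx}}^2 - 1.
\]
Since $\va$ is subgaussian isotropic with $\normsg{\va}\le K$, each scalar $\nj{\va_j,\vx}$ is subgaussian with parameter $K$, and hence $\abs{\nj{\va_j,\vx}}^2 - 1$ is mean-zero subexponential with $\normse{\,\cdot\,}\lesssim K^2$ (using $\normse{XY}\le\normsg{X}\normsg{Y}$). Bernstein's inequality for sums of i.i.d.\ subexponential variables then gives, for every $u\ge 0$,
\[
	\PP\dkh{\abs{\vx^\hh \mm \vx}\ge u}\le 2\exp\xkh{-c\, m\min\dkh{\frac{u^2}{K^4},\frac{u}{K^2}}}.
\]

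Now I would take a union bound over $\mathcal{N}_\ep$. Choosing $u = \tfrac12 K^2\max(\delta,\delta^2)$ with $\delta = C\bigl(\sqrt{n/m}+t/\sqrt{m}\bigr)$ and $C$ large enough, the exponent $c\,m\min(u^2/K^4, u/K^2)$ dominates $2n\log 9 + t^2$, which after combining with the net-to-sphere factor of $2$ yields the claimed tail probability $2\exp(-t^2)$. The two-regime structure $\max(\delta,\delta^2)$ appears precisely because Bernstein has two regimes: when $\delta\le 1$ (i.e.\ $m\gtrsim n$) the subgaussian regime controls and the deviation scales like $\delta$, whereas for small $m$ the subexponential tail produces a $\delta^2$ deviation.

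The main obstacle I anticipate is bookkeeping the exponent so that both the $n$ from $\log\abs{\mathcal{N}_\ep}$ and the target $t^2$ from the probability bound are absorbed correctly across the two Bernstein regimes; in particular, one must verify that the parameter $\delta$ in the statement captures the crossover at $\sqrt{n/m}$ versus the linear-in-$u$ regime. Once those constants are matched, the comparison $\norm{\mm}\le 2\sup_{\mathcal{N}_\ep}\abs{\vx^\hh\mm\vx}$ and the union bound yield the lemma. (This is Vershynin's proof of Theorem~4.6.1; in the paper itself the lemma is invoked as a black box.)
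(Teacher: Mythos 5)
Your proposal is a correct reconstruction of Vershynin's proof of Theorem 4.6.1 (net reduction for the Hermitian quadratic form, Bernstein for the subexponential variables $\abs{\nj{\va_j,\vx}}^2-1$, union bound over the $1/4$-net of cardinality $9^{2n}$ in $\C^n$, with the two Bernstein regimes producing $\max(\delta,\delta^2)$). The paper itself does not prove this lemma but cites it directly from \cite{Vershynin2018}, so there is nothing in the paper to compare against; your argument is the standard one and is sound.
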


\begin{lemma}  \label{le:order:statistics}
	Let $\va_1,\ldots,\va_m$ be the independent, mean zero, subgaussian isotropic random vectors  in $\C^n$ with parameter $K \ge 1$. Then for any fixed $\vz\in\C^n$,  there exists a universal constant $c_2>0$ such that with probability at least $1 - O\xkh{m^{-50}}$, it holds
	\[
	\maxm{1\le j\le m} \aabs{\va_j^\hh \vz} \le c_2 K \sqrtlm\norm{\vz}.
	\]
	In addition, with probability at least $1 - m\exp\xkh{-c_0 n}$,  we have
	\[
	\maxm{1\le j\le m} \norm{\va_j} \le c_3 K^2 \sqrtn,
	\]
	where $c_0,c_3>0$ are universal constants.
\end{lemma}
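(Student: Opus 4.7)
The plan is to prove the two bounds separately, each via a one-line tail estimate followed by a union bound over $j=1,\ldots,m$.

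For the first claim, I would fix $\vz \in \C^n$ and observe that by definition of a subgaussian random vector, the linear marginal $\va_j^\hh \vz$ is a subgaussian random variable with $\norms{\va_j^\hh \vz}_{\psi_2} \le K \norm{\vz}$. The subgaussian tail inequality \eqref{eq:sgtail} then yields
\begin{equation*}
\PP\dkh{\abs{\va_j^\hh \vz} \ge t} \le 2\exp\xkh{-\frac{c_1 t^2}{K^2 \norm{\vz}^2}}
\end{equation*}
for every $t \ge 0$. Choosing $t = c_2 K \sqrt{\log m}\,\norm{\vz}$ with $c_2$ large enough that $c_1 c_2^2 \ge 51$ gives a per-$j$ probability of at most $2 m^{-51}$. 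A union bound over $j = 1, \ldots, m$ then produces the claimed $\max_j \aabs{\va_j^\hh \vz} \le c_2 K \sqrtlm \norm{\vz}$ with probability at least $1 - 2 m^{-50} = 1 - O(m^{-50})$.

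For the second claim, I would invoke Lemma \ref{le:x2} applied to each $\va_j$, which has independent mean-zero subgaussian coordinates with unit second moment and $\psi_2$-parameter at most $K$. This gives
\begin{equation*}
\PP\dkh{\aabs{\norm{\va_j} - \sqrtn} \ge t} \le 2 \exp\xkh{-\frac{c t^2}{K^4}}
\end{equation*}
for a universal $c > 0$. Selecting $t = K^2 \sqrtn$ yields $\norm{\va_j} \le \sqrtn + K^2 \sqrtn \le 2 K^2 \sqrtn$ (using $K \ge 1$) with probability at least $1 - 2\exp(-c n)$. A union bound over $j = 1, \ldots, m$ gives $\max_j \norm{\va_j} \le c_3 K^2 \sqrtn$ with probability at least $1 - m \exp(-c_0 n)$ for suitable universal constants $c_0, c_3 > 0$.

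Neither step presents a real obstacle: both claims reduce directly to standard concentration tools already catalogued in the preliminaries (the subgaussian tail bound \eqref{eq:sgtail} and Lemma \ref{le:x2}). The only points requiring care are bookkeeping — namely, picking the constant $c_2$ large enough to absorb the union-bound factor $m$ while still leaving $m^{-50}$ on the right, and using the hypothesis $K \ge 1$ to simplify $\sqrtn + K^2 \sqrtn$ into $c_3 K^2 \sqrtn$ so that the exponent in the tail probability is $-c_0 n$ rather than $-c_0 n / K^4$ after the union bound.
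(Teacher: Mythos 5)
Your proposal is correct and follows essentially the same route as the paper: the first bound via the subgaussian tail inequality \eqref{eq:sgtail} with $t = c_2 K\sqrt{\log m}\,\norm{\vz}$, $c_1 c_2^2 \ge 51$, and a union bound, and the second bound as a direct consequence of Lemma \ref{le:x2} with $t = O(K^2\sqrt{n})$ followed by a union bound over $j$. No gaps to report.
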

\begin{proof}
For any $1\le j \le m$, the subgaussian tail inequality \eqref{eq:sgtail} gives
	\begin{align*}
		\PP\xkh{\aabs{\va_j^\hh \vz}\ge t}\le 2\exp\xkh{-\frac{c_1 t^2}{ K^2 \norm{\vz}^2}},\quad \forall\ t\ge 0
	\end{align*}
for some universal constant $c_1>0$. Setting $t=c_2 K  \sqrt{\log m}\norm{\vz}$ and taking a union bound, we have
\[
\PP\xkh{ \maxm{1\le j\le m} \aabs{\va_j^\hh \vz}\ge c_2 K  \sqrt{\log m}\norm{\vz} } \le  2 m \exp\xkh{-c_1 c_2^2 \log m} \le O\xkh{m^{-50}},
\]
provided $c_2 \ge \sqrt{51/c_1}$. This gives the conclusion of the first part. The second part is a direct consequence of \eqref{le:x2} by taking $t=O(K^2 \sqrtn)$.

\end{proof}

%---------------------Lemma A.2.1
%%% $1-2\exp(-c_0 n)-O(m^{-10})$
\begin{lemma}\label{le:pre:uniform:bound}
	Suppose that  $\va_1,\ldots,\va_m \in \C^n $ are i.i.d. subguassian random vectors with parameter $K$. For any constant $0<\delta<1$, with probability at least $1-m\exp(-c_0 n)-O(m^{-20})$, it holds
	\begin{align}
		\label{eq:nabal11}\norm{\frac1m \sum_{j=1}^m \aabs{\va_j^\hh \vz}^2\va_j\va_j^\hh - \E \zkh{\aabs{\va^\hh \vz}^2\va\va^\hh }} \le \delta \norm{\vz} \\
		\label{eq:nabal21}\norm{\frac1m \sum_{j=1}^m (\va_j^\hh \vz)^2 \va_j \va_j^{\T} - \E\zkh{ ( \va^\hh \vz)^2 \va \va^{\T} }} \le \delta \norm{\vz}
	\end{align}
	simultaneously for all $\vz\in \C^n$ obeying \(\max_{1\le j\le m} |\va_j^\hh \vz| \le c_2 K \sqrt{\log m}\norm{\vz}\), provided that $m\ge C_0  \delta ^{-2} K^8n \log^3 m$. Here,  $\va \in \C^n$ is a independent copy of $\va_1$, and  $c_0, c_2, C_0>0$ are some universal constants.
%	 $\mc_1\xkh{\vz}$. and $\mc_1\xkh{\vz}$ are defined as
% 	\begin{align*}
% 		\mc_1\xkh{\vz} =& \norm{\vz}^2\mi + \vz\vz^\hh + \abs{\E a_{11}^2}^2\bar{\vz}\vz^\T - \xkh{2-\E \abs{a_{11}}^4+\abs{\E a_{11}^2}^2}\md_1(\vz)\\
% 		\mc_2\xkh{\vz} =& \ 2\vz\vz^\T + (\abs{\E a_{11}^2}^2)\xkh{\vz^\T \vz}\mi - \xkh{2-\E \abs{a_{11}}^4+\abs{\E a_{11}^2}^2}\md_2(\vz).
% 	\end{align*}
%Fix some $\vzx\in\C^n$, then with probability exceeding $1-m\exp(-c_0 n)-O(m^{-20})$, one has
% 	\begin{align*}
% 		\norm{\frac1m \sum_{j=1}^m \abs{\va_j^\hh \vzx}^2\va_j\va_j^\hh - \mc_1 \xkh{\vzx} } \le& \delta \norm{\vzx} \\
% 		\norm{\frac1m \sum_{j=1}^m \xkh{\va_j^\hh \vzx}^2 \va_j \va_j^{\T} - \mc_2 \xkh{\vzx} } \le& \delta \norm{\vzx}.
% 	\end{align*}
\end{lemma}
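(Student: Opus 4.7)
The plan is to prove both inequalities uniformly via a truncation step followed by an $\varepsilon$-net argument and matrix Bernstein. By homogeneity I may assume $\|\vz\|=1$ throughout, and I will focus on \eqref{eq:nabal11}; the bound \eqref{eq:nabal21} follows by identical arguments after replacing Hermitian matrix Bernstein by its rectangular counterpart.

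First, set $\tau := c_2 K\sqrt{\log m}$ and condition on the event $\{\max_j \|\va_j\|\le c_3 K^2\sqrt n\}$ from Lemma~\ref{le:order:statistics}, which costs at most $m\exp(-c_0 n)$. Introduce the truncated summand
\begin{equation*}
X_j(\vz) \;:=\; |\va_j^\hh \vz|^2\, \va_j\va_j^\hh\,\mathbf{1}_{\{|\va_j^\hh \vz|\le \tau\}},
\end{equation*}
so that on the set of $\vz$ described in the hypothesis one has $\frac{1}{m}\sum_j X_j(\vz)=\frac{1}{m}\sum_j |\va_j^\hh \vz|^2\va_j\va_j^\hh$. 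It therefore suffices to bound the decomposition
\begin{equation*}
\frac{1}{m}\sum_j X_j(\vz) - \E\!\bigl[|\va^\hh \vz|^2\va\va^\hh\bigr] \;=\; \underbrace{\frac{1}{m}\sum_j\bigl(X_j(\vz)-\E X_1(\vz)\bigr)}_{=:\Delta(\vz)} \;+\; \underbrace{\E X_1(\vz)-\E\!\bigl[|\va^\hh \vz|^2\va\va^\hh\bigr]}_{=:\Gamma(\vz)}.
\end{equation*}
The expectation gap $\Gamma(\vz)$ is controlled by the subgaussian tail $\Pr(|\va^\hh\vz|>\tau)\le 2\exp(-c\tau^2/K^2)\le m^{-100}$ combined with the moment estimate $\E|\va^\hh\vz|^{2p}\le (cpK^2)^p$ and a Cauchy--Schwarz step, giving $\|\Gamma(\vz)\|\le \delta/3$ once $c_2$ is large enough.

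For a single $\vz$ on the complex unit sphere, the bounds $\|X_j(\vz)\|\le \tau^2\|\va_j\|^2 \lesssim K^6 n\log m$ and variance proxy $\|\E X_j(\vz)^2\|\lesssim K^6$ feed into matrix Bernstein to yield
\begin{equation*}
\Pr\!\Bigl\{\|\Delta(\vz)\|>\delta/3\Bigr\}\;\le\; 2n\exp\!\Bigl(-c\,m\delta^2/(K^8 n\log m)\Bigr).
\end{equation*}
I then cover the sphere by a net $\mathcal{N}$ of mesh $\varepsilon=m^{-10}$ with $|\mathcal{N}|\le (3/\varepsilon)^{2n}$; a union bound gives the pointwise estimate simultaneously on $\mathcal{N}$ with probability $1-O(m^{-20})$, provided $m\ge C_0\delta^{-2}K^8 n\log^3 m$. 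For the Lipschitz extension, any $\vz$ in the hypothesis set has a nearest net point $\vz_0$ with $\|\vz-\vz_0\|\le\varepsilon$, and on the event $\{|\va_j^\hh \vz_0|\le 2\tau\}$ the elementary inequality
\begin{equation*}
\bigl\||\va_j^\hh\vz|^2-|\va_j^\hh\vz_0|^2\bigr\|\;\le\;\bigl(|\va_j^\hh\vz|+|\va_j^\hh\vz_0|\bigr)\|\va_j\|\,\|\vz-\vz_0\|
\end{equation*}
combined with $\|\va_j\|\le c_3 K^2\sqrt n$ yields $\|X_j(\vz)-X_j(\vz_0)\|\lesssim K^7 n^{3/2}\sqrt{\log m}\,\varepsilon$, which is $\le \delta/3$ at scale $\varepsilon=m^{-10}$. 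Summing these three contributions gives \eqref{eq:nabal11} uniformly; the analogous scalar moment computations reproduce \eqref{eq:nabal21}.

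The main obstacle is the hard indicator $\mathbf{1}_{|\va_j^\hh\vz|\le\tau}$, which can jump across the net and break the Lipschitz step. I plan to resolve this by smoothing the truncation — replacing $\mathbf{1}_{|w|\le\tau}$ by a $(1/\tau)$-Lipschitz bump equal to $1$ on $|w|\le\tau$ and $0$ on $|w|\ge 2\tau$ — which preserves the boundedness $\tau^2\|\va_j\|^2$ and the variance proxy up to universal constants, while making $X_j(\vz)$ a bona fide Lipschitz function of $\vz$; the expectation correction $\Gamma(\vz)$ is still $o(\delta)$ because the smoothing only affects the tail $\{|\va^\hh\vz|>\tau\}$ which has mass $\le m^{-100}$. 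A secondary technical point is the bookkeeping of $K$: using $\|\va_j\|\le c_3 K^2\sqrt n$ rather than $\sqrt n$ is what produces the $K^8$ in the sample complexity, and I will need to track these factors carefully through Bernstein and the Lipschitz estimate to land exactly at the stated $m\ge C_0\delta^{-2}K^8 n\log^3 m$.
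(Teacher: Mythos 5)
Your overall architecture (truncate, split into a deviation term plus an expectation gap, net over $\vz$) matches the paper's, and your treatment of the expectation gap $\Gamma(\vz)$ and of the indicator discontinuity (smoothing it to a Lipschitz bump, where the paper instead bounds the symmetric-difference terms via $\1_{\{\vz\in S_j,\,\vz_0\notin S_j\}}\le\1_{\{\tau\le\aabs{\va_j^\hh\vz_0}\le\tau+c_3K^2\sqrt n\,\epsilon\}}$) is reasonable. The genuine gap is in the concentration step. For fixed $\vz$ you invoke matrix Bernstein with range $L\lesssim K^6 n\log m$ and variance proxy $\norm{\E X_j(\vz)^2}\lesssim K^6$; the latter is off by a factor of $n$, since $X_j(\vz)^2=\aabs{\va_j^\hh\vz}^4\norm{\va_j}^2\va_j\va_j^\hh\1_{\{\cdot\}}$ and $\norm{\va_j}^2\asymp n$, so in fact $\norm{\E X_j^2}\gtrsim n$ (test with $\vy=\vz$). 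With the correct parameters, matrix Bernstein at deviation $m\delta/3$ gives an exponent of order $\min\{m\delta^2/(K^8 n\log m),\,m\delta/(K^6 n\log m)\}$, which at the stated sample size $m\asymp\delta^{-2}K^8 n\log^3 m$ is only $O(\log^2 m)$. That cannot survive the union bound over your net of mesh $m^{-10}$, whose cardinality is $e^{\Theta(n\log m)}$: you would need the exponent to exceed $Cn\log m$, i.e. $m\gtrsim\delta^{-2}K^8 n^2\log^2 m$. The loss of a factor $n$ is structural, not an accounting slip: once you control the operator norm for each fixed $\vz$ by matrix Bernstein, the per-term matrix variance necessarily carries the factor $\norm{\va_j}^2\asymp n$, and no choice of deviation level $t\le\delta$ makes the exponent $\asymp mt^2/n$ reach $n\log m$ when $m\asymp n\,\mathrm{polylog}(m)$.

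The paper's proof evades exactly this: instead of matrix Bernstein it expresses the operator norm through a fixed $1/4$-net over the second vector $\vy$ and applies scalar Bernstein to the sub-exponential scalars $\aabs{\va_j^\hh\vz}^2\aabs{\va_j^\hh\vy}^2\1_{\{\aabs{\va_j^\hh\vz}\le c_2K\sqrt{\log m}\}}$, whose $\psi_1$-norm is $\lesssim K^4\log m$ --- dimension-free thanks to the truncation. Choosing the deviation $t\asymp K^4\sqrt{C_{3} n\log^3 m/m}\le\delta$ then gives failure probability $\exp(-cC_{3}n\log m)$ with a tunable large constant $C_{3}$, which beats both the $81^n$ points of the $\vy$-net and the $\epsilon$-net over $\vz$. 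To repair your argument you would need to replace the matrix-Bernstein step by this scalar reduction (or another bound whose exponent scales like $n\log m$ at $m\asymp n\log^3 m$); as written, the claimed union bound over $\mathcal N$ fails at the stated sample complexity, and your quoted variance proxy is incorrect.
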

%---------------------Proof of Lemma A.2.1
\begin{proof}
	See Section \ref{pf:pre:uniform:bound}.
\end{proof}
%\clearpage
%---------------------Lemma A.4
\begin{lemma}\label{le:noise}
	Suppose that  $\va_1,\ldots,\va_m \in \C^n$ are i.i.d. subgaussian isotropic random vectors with parameter $K$. Let  ${\rm \xi}=(\xi_1,\ldots,\xi_m)^\T \in \R^m$ be a fixed vector. Then with probability at least $1-2\exp\xkh{-c_0 n }$, it holds
	\[
	\norm{ \sum_{j=1}^m \xi_j \va_j \va_j^\hh} \le \aabs{\vone^\T \vxi} + C_{3}K^2\xkh{\sqrt{n}\norm{\vxi} + n\norms{\vxi}_\infty}.
	\]
 Here, $c_0, C_{3}>0$ are  universal constants.
\end{lemma}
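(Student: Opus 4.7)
The plan is to peel off the deterministic mean of the sum and control the centered remainder via an $\ep$-net combined with Bernstein's inequality for subexponential variables. Using the isotropy assumption, I would write
\[
\sum_{j=1}^m \xi_j \va_j \va_j^{\hh} \;=\; (\vone^\T \vxi)\,\mi \;+\; S, \qquad S := \sum_{j=1}^m \xi_j \bigl(\va_j \va_j^{\hh} - \mi\bigr),
\]
so by the triangle inequality the problem reduces to showing $\norms{S} \lesssim K^2(\sqrt{n}\,\norm{\vxi} + n\,\norminf{\vxi})$ on an event of probability at least $1-2\exp(-c_0 n)$.

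Since each $\xi_j \in \R$, the matrix $S$ is Hermitian, so $\norms{S} = \sup_{\norm{\vv}=1}|\vv^{\hh}S\vv|$. I would take a standard $\tfrac{1}{4}$-net $\cN$ of the unit sphere in $\C^n$, for which $|\cN|\le 9^{2n}$, and use $\norms{S} \le C\,\sup_{\vv\in\cN}|\vv^{\hh}S\vv|$ for some universal constant $C$. For any fixed unit $\vv$,
\[
\vv^{\hh} S \vv \;=\; \sum_{j=1}^m \xi_j\bigl(|\va_j^{\hh}\vv|^2 - 1\bigr)
\]
is a sum of independent centered random variables. Since $\normsg{\va_j^{\hh}\vv}\le K$, each $|\va_j^{\hh}\vv|^2 - 1$ is centered subexponential with $\psi_1$-norm $\lesssim K^2$, and thus $\xi_j(|\va_j^{\hh}\vv|^2-1)$ has $\psi_1$-norm $\lesssim K^2|\xi_j|$. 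Bernstein's inequality for weighted sums of subexponential variables then yields
\[
\PP\bigl(|\vv^{\hh}S\vv|\ge t\bigr)\;\le\; 2\exp\!\left(-c\,\min\!\left(\frac{t^2}{K^4\norm{\vxi}^2},\ \frac{t}{K^2\norminf{\vxi}}\right)\right).
\]

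Setting $t = C'K^2(\sqrt{n}\,\norm{\vxi}+n\,\norminf{\vxi})$ for a sufficiently large absolute constant $C'$ makes both arguments of the $\min$ at least of order $n$, so after a union bound over $\cN$ the event $\{\norms{S}\ge C\,t\}$ has probability at most $2\cdot 9^{2n}\exp(-c''n)\le 2\exp(-c_0 n)$, provided $C'$ is taken large enough to absorb $\log(9^{2n})=2n\log 9$. The main structural point, and the only step requiring genuine care, is that the two error terms $\sqrt{n}\,\norm{\vxi}$ and $n\,\norminf{\vxi}$ are forced by the two regimes of Bernstein's inequality: the Gaussian regime, governed by the variance proxy $K^4\norm{\vxi}^2$, and the heavy-tail regime, governed by the largest weight $K^2\norminf{\vxi}$. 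Both must simultaneously dominate the $9^{2n}$ net cost through the $\min$ in the exponent, which is precisely what produces the prefactors $\sqrt{n}$ and $n$ on the respective terms.
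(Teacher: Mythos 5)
Your proposal is correct and follows essentially the same route as the paper's proof: it isolates the mean contribution $(\vone^\T\vxi)\mi$ (equivalently, centers the quadratic form using isotropy), applies Bernstein's inequality for weighted sums of subexponential variables to $\sum_j \xi_j(|\va_j^\hh\vv|^2-1)$ with the same two-regime variance/sup-norm proxies, and closes with a $1/4$-net union bound (cardinality $81^n = 9^{2n}$) after choosing $t \asymp K^2(\sqrt{n}\,\norm{\vxi}+n\,\norminf{\vxi})$ so that both Bernstein regimes dominate the net's exponential cost. No substantive differences from the paper's argument.
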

%---------------------Proof of Lemma A.4
\begin{proof}
	For any fixed $\vx\in \C^n$ with $\norm{\vx}=1$, the Bernstein's inequality gives
		\[
		\PP\dkh{\abs{\sum_{j=1}^m \xkh{\xi_j \abs{\va_j^\hh \vx}^2-\E\zkh{\xi_j \abs{\va_j^\hh \vx}^2}}  } \ge t } \le 2\exp\xkh{-c \min\xkh{\frac{t^2}{K^4 \norm{\vxi}^2},  \frac{t}{K^2 \norms{\vxi}_\infty}} }
		\]
	for any $t \ge 0$.
	Set $t=\frac{1}{2} \cdot C_{3}K^2(\sqrt{n}\norm{\vxi} + n\norms{\vxi}_\infty)$ with
	$C_{3}>1$.  Then one has
	\[
	\PP\dkh{\abs{\sum_{j=1}^m \xi_j \xkh{\aabs{\va_j^\hh \vx}^2-1} } \ge t } \le 2\exp\xkh{- \frac{cC_{3}}4 \cdot n}.
	\]
	Here,  we use the fact that $\E\zkh{\aabs{\va_j^\hh \vx}}^2=1$ due to $\va_j$ are isotropic random vectors.
	
	Let $\mathcal{N}_{\frac14}$ be a $1/4$-net of the unit sphere $\mathcal{S}_{\C}^{n-1}$, which has cardinality at most $81^n$. Then \cite[Exercise 4.4.3]{Vershynin2018} implies
	\begin{equation*}
		\norm{ \sum_{j=1}^m \xi_j \xkh{\va_j \va_j^\hh-I}}\le 2 \sup_{\vx \in \mathcal{N}_{\frac14}} \abs{\sum_{j=1}^m \xi_j \xkh{\abs{\va_j^\hh \vx}^2-1} }\le C_{3}K^2(\sqrt{n}\norm{\vxi} + n\norms{\vxi}_\infty)
	\end{equation*}
	with probability at least $1-81^n\cdot 2 \exp\xkh{-cC_{3} n/4 }\ge 1-2 \exp\xkh{-c_0 n }$ for some universal constant $c_0$ as long as $C_3 > (4\log 81)/c$. This gives the conclusion that
	\begin{equation*}
		\norm{ \sum_{j=1}^m \xi_j \va_j \va_j^\hh}\le \aabs{\vone^\T \vxi} + C_{3}K^2(\sqrt{n}\norm{\vxi} + n\norms{\vxi}_\infty).
	\end{equation*}
\end{proof}

%---------------------Lemma A.4
\begin{lemma}\label{le:uniform:bound}
	Suppose that $\va_1,\ldots,\va_m$ are i.i.d. subguassian random vectors with parameter $K \ge 1$, and the noise vector $\vxi$ satisfies $\norm{\vxi}\lesssim \sqrtm \norm{\vzx}^2$ and $\norminf{\vxi}\lesssim \log m \norm{\vzx}^2$. For any $0<\delta<1$, with probability at least $1-m\exp(-c_0 n)-O(m^{-20})$, it holds
	\begin{align*}
		\norm{\nabla^2 f(\vz) -\E \nabla^2 f(\vz) } \le \delta \norm{\vzx}^2
	\end{align*}
	simultaneously for all $\vz\in \C^n$ obeying
	\begin{align*}
		\max_{1\le j\le m} |\va_j^\hh \vz| &\le c_2 K \sqrt{\log m}\norm{\vz};\\
		\norm{\vz-\vzx} &\le \frac12 \norm{\vzx};
	\end{align*}
	provided that $m\ge C_0  \delta ^{-2} K^8n \log^3 m$. Here, $c_0, c_2, C_0>0$ are some universal constants.
\end{lemma}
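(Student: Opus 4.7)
The strategy is to write the Hessian deviation as a sum of a few centered matrix sums, and bound each one using a concentration lemma established earlier in the appendix. Writing $\nabla^2 f(\vz)$ in its $2\times 2$ block form (as in \eqref{def:Hessian}), with diagonal block $\ma(\vz) = \frac{1}{m}\sum_j (2\aabs{\va_j^\hh \vz}^2 - y_j)\va_j\va_j^\hh$ and off-diagonal block $\mb(\vz) = \frac{1}{m}\sum_j (\va_j^\hh \vz)^2 \va_j\va_j^\top$, an elementary block-matrix inequality gives $\norm{\nabla^2 f(\vz) - \E \nabla^2 f(\vz)} \le 2\xkh{\norm{\ma(\vz) - \E \ma(\vz)} + \norm{\mb(\vz) - \E \mb(\vz)}}$. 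It therefore suffices to bound each block by $\tfrac{\delta}{4}\norm{\vzx}^2$.

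Substituting $y_j = \aabs{\va_j^\hh \vzx}^2 + \xi_j$ into $\ma(\vz)$, I would split $\ma(\vz) - \E \ma(\vz)$ into three centered pieces: a $\vz$-dependent piece $\frac{2}{m}\sum_j \bigl(\aabs{\va_j^\hh \vz}^2 \va_j\va_j^\hh - \E\zkh{\aabs{\va^\hh \vz}^2\va\va^\hh}\bigr)$; a $\vzx$-dependent piece of the same form with $\vz$ replaced by $\vzx$; and a noise piece $\frac{1}{m}\sum_j \xi_j(\va_j\va_j^\hh - \mi)$ (the isotropic mean $\frac{\vone^\T \vxi}{m}\mi$ being already absorbed into $\E\ma(\vz)$). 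The $\vz$-piece is handled uniformly in $\vz$ by the first bound of Lemma \ref{le:pre:uniform:bound}, thanks precisely to the working hypothesis $\max_j \aabs{\va_j^\hh \vz} \le c_2 K \sqrtlm\norm{\vz}$; combined with $\norm{\vz} \le \tfrac{3}{2}\norm{\vzx}$ (from $\norm{\vz-\vzx}\le \tfrac12\norm{\vzx}$) and an appropriately rescaled $\delta$, this yields the target $\tfrac{\delta}{12}\norm{\vzx}^2$ bound under the sample complexity $m\ge C_0 \delta^{-2} K^8 n \log^3 m$. For the $\vzx$-piece, Lemma \ref{le:order:statistics} guarantees $\max_j \aabs{\va_j^\hh \vzx} \le c_2 K \sqrtlm\norm{\vzx}$ with probability at least $1-O(m^{-50})$, so Lemma \ref{le:pre:uniform:bound} again applies at the fixed point $\vzx$ and delivers the same bound. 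For the noise piece, Lemma \ref{le:noise} applied to $\sum_j \xi_j(\va_j\va_j^\hh-\mi)$ (which avoids any hypothesis on $\aabs{\vone^\T\vxi}$) gives a deviation of order $\frac{K^2(\sqrtn\norm{\vxi} + n\norminf{\vxi})}{m}$; the assumptions $\norm{\vxi}\lesssim \sqrtm\norm{\vzx}^2$ and $\norminf{\vxi}\lesssim \log m\norm{\vzx}^2$ then turn this into $K^2\xkh{\sqrt{n/m} + n\log m/m}\norm{\vzx}^2$, which is at most $\tfrac{\delta}{12}\norm{\vzx}^2$ under the same sample complexity.

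The off-diagonal block $\mb(\vz) - \E \mb(\vz)$ is handled in exactly the same way by the second bound of Lemma \ref{le:pre:uniform:bound}, again invoking the hypothesized incoherence of $\vz$ with the measurement vectors. A union bound over the (finitely many) high-probability events used above, each of which fails with probability at most $m\exp(-c_0 n) + O(m^{-20})$, then yields the desired uniform estimate.

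The main obstacle, as I see it, is dimensional bookkeeping: the bound in Lemma \ref{le:pre:uniform:bound} is stated as $\delta\norm{\vz}$, and one must trace through the scaling carefully to translate this into a bound of the form $\delta\norm{\vzx}^2$ for the Hessian block without inflating the sample-complexity threshold beyond $m\ge C_0 \delta^{-2} K^8 n\log^3 m$. A secondary subtlety is that the lemma makes no assumption on $\aabs{\vone^\T\vxi}$; this is precisely why the noise contribution must be centered against $\frac{\vone^\T\vxi}{m}\mi$ (which sits inside $\E\nabla^2 f(\vz)$ via Lemma \ref{le:e}) before invoking Lemma \ref{le:noise}, and the proof of that lemma, being Bernstein-based on the centered sum, supports this application without further assumptions on $\vxi$.
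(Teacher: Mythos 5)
Your proposal is correct and follows essentially the same route as the paper's proof: decompose the Hessian deviation into the $\vz$-quadratic, $\vzx$-quadratic, off-diagonal, and noise pieces, bound the first three uniformly via Lemma \ref{le:pre:uniform:bound} (using Lemma \ref{le:order:statistics} for the incoherence of $\vzx$), and bound the noise piece via the centered form $\frac1m\sum_j \xi_j(\va_j\va_j^\hh-\mi)$ from Lemma \ref{le:noise}, exactly as the paper does. Your observation that no assumption on $\aabs{\vone^\T\vxi}$ is needed because that term sits inside $\E\nabla^2 f(\vz)$ matches the paper's treatment.
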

%---------------------Proof of Lemma A.2.1
\begin{proof}
Recall the Hessian matrix \eqref{def:Hessian} and note that $y_j= \aabs{\va_j^\hh \vzx}^2+\xi_j$. Therefore, we have
 \begin{equation*}
		\nabla^2 f(\vz)=
		\begin{bmatrix}
			\frac 1m\sum\limits_{j=1}^m \xkh{2\aabs{\va_j^\hh \vz}^2-\aabs{\va_j^\hh \vzx}^2-\xi_j}\va_j\va_j^\hh & \frac 1m\sum\limits_{j=1}^m \xxkh{\va_j^\hh \vz}^2\va_j\va_j^\top \\
			\frac 1m\sum\limits_{j=1}^m \xkh{\vz^\hh \va_j}^2\bar{\va}_j\va_j^\hh &\frac 1m\sum\limits_{j=1}^m \xkh{2\aabs{\va_j^\hh \vz}^2-\aabs{\va_j^\hh \vzx}^2-\xi_j} \bar{\va}_j\va_j^\top
		\end{bmatrix}.
	\end{equation*}
	Applying Lemma \ref{le:pre:uniform:bound}, with high probability, one has
	\[
	\norm{\frac 1m \sum_{j=1}^m \aabs{\va_j^\hh \vz}^2\va_j\va_j^\hh - \E \aabs{\va^\hh \vz}^2\va\va^\hh } \le \frac\delta 8 \norm{\vz}
	\]
and
\[
\norm{\frac 1m \sum_{j=1}^m \xxkh{\va_j^\hh \vz}^2 \va_j \va_j^{\T} - \E \xxkh{\va^\hh \vz}^2 \va\va ^{\T}} \le \frac \delta 8 \norm{\vz}.
\]	
Here, $\va$ is a random vector which is a independent copy of $\va_1$.
Since $\max_{1\le j\le m} |\va_j^\hh \vz^*| \le c_2 K \sqrt{\log m}\norm{\vz^*}$ holds with probability exceeding $1-O(m^{-50})$ by Lemma \ref{le:order:statistics}.  Therefore, applying Lemma \ref{le:pre:uniform:bound} again gives
\[
		\norm{\frac 1m \sum_{j=1}^m \aabs{\va_j^\hh \vzx}^2\va_j\va_j^\hh - \E \aabs{\va^\hh \vzx}^2\va\va^\hh  } \le \frac\delta 8 \norm{\vzx}
\]
Lemma \ref{le:noise} implies that with probability at least $1-m\exp(-c_0 n)-O(m^{-20})$, it holds
	\begin{equation*}
		\norm{\frac 1m \sum_{j=1}^m \xi_j \xkh{\va_j \va_j^\hh-I}}\le \frac{C_{3}K^2(\sqrt{n}\norm{\vxi} + n\norms{\vxi}_\infty)}{m}\le \frac\delta 8 \norm{\vzx},
	\end{equation*}
	provided that $m\ge C_0  \delta ^{-2} K^8n \log^3 m$.
Here, we use the fact that $\norm{\vxi}\lesssim \sqrtm \norm{\vzx}^2$ and $\norminf{\vxi}\lesssim \log m \norm{\vzx}^2$.
Collecting all the above estimators together, we arrive at the conclusion that
	\begin{eqnarray*}
		& &\norm{\nabla^2 f(\vz) -\E \nabla^2 f(\vz) } \\
		& \le& 2\norm{\frac 1m \sum_{j=1}^m \aabs{\va_j^\hh \vz}^2\va_j\va_j^\hh -\E \aabs{\va^\hh \vz}^2\va\va^\hh  }+\norm{\frac 1m \sum_{j=1}^m \xkh{\va_j^\hh \vz}^2 \va_j \va_j^{\T} - \E \xxkh{\va^\hh \vz}^2 \va\va ^{\T} }\\
		& &+\norm{\frac 1m \sum_{j=1}^m \aabs{\va_j^\hh \vzx}^2\va_j\va_j^\hh - \E \aabs{\va^\hh \vzx}^2\va\va^\hh  }+\norm{\frac 1m \sum_{j=1}^m \xi_j \xkh{\va_j \va_j^\hh-I}}\\
		& \le& \delta \norm{\vzx},
	\end{eqnarray*}
where we use the fact that $\norm{\vz} \le \norm{\vz^*}+\norm{\vz-\vz^*} \le 2\norm{\vz^*}$ in the last inequality.
\end{proof}

%---------------------Lemma A.4
\begin{lemma} \label{le:align}
	For any $\vz_1,\vz_2\in\C^n$, denote
	\begin{align*}
		\phi_1 = \argmin{\phi\in\R} \norm{\e^{i\phi}\vz_1-\vz_2}\quad \mbox{and}\quad \tz_1 = \e^{i\phi_1} \vz_1.
	\end{align*}
	Then it holds that
	\[(\tz_1-\vz_2)^\hh \tz_1 = \tz_1^\hh(\tz_1-\vz_2).
	\]
\end{lemma}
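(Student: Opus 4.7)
The plan is to reduce the identity to showing that the inner product $\tz_1^{\hh}\vz_2$ is real, and then extract this fact from the first-order optimality condition defining $\phi_1$.

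First I would rewrite both sides of the claimed identity in a simpler form. We have
\begin{align*}
(\tz_1-\vz_2)^{\hh}\tz_1 &= \|\tz_1\|^2 - \vz_2^{\hh}\tz_1,\\
\tz_1^{\hh}(\tz_1-\vz_2) &= \|\tz_1\|^2 - \tz_1^{\hh}\vz_2.
\end{align*}
Since $\vz_2^{\hh}\tz_1 = \overline{\tz_1^{\hh}\vz_2}$, the desired equality $(\tz_1-\vz_2)^{\hh}\tz_1 = \tz_1^{\hh}(\tz_1-\vz_2)$ is equivalent to $\tz_1^{\hh}\vz_2 \in \R$.

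Next I would exploit the definition of $\phi_1$. Expanding,
\begin{equation*}
\|\e^{i\phi}\vz_1-\vz_2\|^2 = \|\vz_1\|^2 + \|\vz_2\|^2 - 2\,\real{\e^{-i\phi}\vz_1^{\hh}\vz_2}.
\end{equation*}
Writing $\vz_1^{\hh}\vz_2 = r\,\e^{i\theta}$ with $r\ge 0$, the quantity $\real{\e^{-i\phi}\vz_1^{\hh}\vz_2} = r\cos(\theta-\phi)$ is maximized precisely when $\phi = \theta \pmod{2\pi}$, so $\phi_1$ satisfies $\e^{-i\phi_1}\vz_1^{\hh}\vz_2 = r \in \R_{\ge 0}$. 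Therefore
\begin{equation*}
\tz_1^{\hh}\vz_2 = \e^{-i\phi_1}\vz_1^{\hh}\vz_2 \in \R,
\end{equation*}
which is exactly what we needed. (If $r=0$, the statement holds trivially, since then $\tz_1^{\hh}\vz_2 = 0$.)

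The argument is very short and has no real obstacle; the only subtlety is handling the degenerate case $\vz_1^{\hh}\vz_2 = 0$, where $\phi_1$ is not unique but any choice still yields $\tz_1^{\hh}\vz_2 = 0 \in \R$, so the identity continues to hold.
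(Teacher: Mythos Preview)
Your proof is correct and follows essentially the same approach as the paper: both reduce the identity to showing $\tz_1^{\hh}\vz_2\in\R$, then observe from the expansion $\|\e^{i\phi}\vz_1-\vz_2\|^2=\|\vz_1\|^2+\|\vz_2\|^2-2\,\real{\e^{-i\phi}\vz_1^{\hh}\vz_2}$ that at the minimizer $\phi_1$ the quantity $\e^{-i\phi_1}\vz_1^{\hh}\vz_2$ must be nonnegative real. Your explicit handling of the degenerate case $\vz_1^{\hh}\vz_2=0$ is a small addition the paper omits, but otherwise the arguments coincide.
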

\begin{proof}
	Observe that
	\[g(\phi) := \norm{\e^{i\phi}\vz_1-\vz_2}^2 = \norm{\vz_1}^2+\norm{\vz_2}^2-2\real{\e^{i\phi}\vz_2^\hh\vz_1}.\]
	In order to minimize \( g(\phi) \), $\e^{i\phi}\vz_2^\hh\vz_1$ should be a positive real number, which means
	\begin{align*}
		\e^{i\phi}\vz_2^\hh\vz_1 = \e^{-i\phi}\vz_1^\hh\vz_2.
	\end{align*}
	Therefore, one has
	\begin{equation*}
		\vz_2^\hh \tz_1 = \tz_1^\hh \vz_2.
	\end{equation*}
	This gives the desired result.
\end{proof}

\begin{lemma} \label{le:noalign}
	For any fixed $\vzx\in\C^n$. Suppose that there exist two vectors \(\vz_1,\vz_2\) satisfying
	\begin{equation*}
		\max \dkh{\norm{\vz_1-\vzx},\norm{\vz_2-\vzx}}\le \delta\le \frac 14
	\end{equation*}
	for some sufficiently small constant $\delta>0$. Denote
	\begin{align*}
		\phi_1 = \argmin{\phi\in\R} \norm{\e^{i\phi}\vz_1-\vzx}\quad \mbox{and}\quad \phi_2 = \argmin{\phi\in\R} \norm{\e^{i\phi}\vz_2-\vzx}.
	\end{align*}
	Then one has
	\[\norm{\e^{i\phi_1}\vz_1-\e^{i\phi_2}\vz_2}\lesssim \norm{\vz_1-\vz_2}.
	\]
\end{lemma}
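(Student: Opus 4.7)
The plan is to reduce the problem to controlling the phase difference between the two alignments. Writing $\psi := \phi_2 - \phi_1$, an immediate calculation gives $\|\e^{i\phi_1}\vz_1 - \e^{i\phi_2}\vz_2\| = \|\vz_1 - \e^{i\psi}\vz_2\|$, so the triangle inequality yields
\[
\|\e^{i\phi_1}\vz_1 - \e^{i\phi_2}\vz_2\| \le \|\vz_1 - \vz_2\| + |\e^{i\psi} - 1| \cdot \|\vz_2\|.
\]
Since $\|\vz_2\| \le \|\vzx\| + \delta$ is bounded (recall the working normalization $\|\vzx\|=1$ in the section), it suffices to establish $|\e^{i\psi} - 1| \lesssim \|\vz_1 - \vz_2\|$.

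Next I would make the phases explicit. Minimizing $g_j(\phi) = \|\e^{i\phi}\vz_j - \vzx\|^2 = \|\vz_j\|^2 + \|\vzx\|^2 - 2\real{\e^{i\phi}\vzx^\hh \vz_j}$ forces $\e^{i\phi_j}\vzx^\hh\vz_j$ to be nonnegative real, so $\e^{i\phi_j} = \vz_j^\hh \vzx / |\vz_j^\hh \vzx|$ whenever $\alpha_j := \vz_j^\hh \vzx \ne 0$. The assumption $\|\vz_j - \vzx\| \le \delta \le 1/4$ together with $\|\vzx\|=1$ guarantees $|\alpha_j| \ge 1 - \delta \ge 3/4$ via $|\alpha_j - 1| = |(\vz_j - \vzx)^\hh \vzx| \le \delta$, so both $\phi_j$ are well-defined. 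Setting $u_j := \alpha_j/|\alpha_j|$, we obtain $\e^{i\psi} = u_2 \bar u_1$, and since $|u_1|=1$, a direct computation gives $|\e^{i\psi} - 1| = |u_2\bar u_1 - u_1 \bar u_1| = |u_2 - u_1|$.

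The remaining task is the algebraic bound $|u_1 - u_2| \lesssim |\alpha_1 - \alpha_2|$. I would use the identity
\[
\frac{\alpha_1}{|\alpha_1|} - \frac{\alpha_2}{|\alpha_2|} = \frac{\alpha_1(|\alpha_2| - |\alpha_1|) + |\alpha_1|(\alpha_1 - \alpha_2)}{|\alpha_1|\,|\alpha_2|},
\]
combined with the reverse triangle inequality $||\alpha_1| - |\alpha_2|| \le |\alpha_1 - \alpha_2|$, to conclude $|u_1 - u_2| \le 2|\alpha_1 - \alpha_2|/\min(|\alpha_1|,|\alpha_2|)$. Then Cauchy--Schwarz gives $|\alpha_1 - \alpha_2| = |(\vz_1-\vz_2)^\hh \vzx| \le \|\vz_1 - \vz_2\|$, and plugging in $\min(|\alpha_1|,|\alpha_2|) \ge 3/4$ yields $|\e^{i\psi} - 1| \le (8/3)\|\vz_1 - \vz_2\|$. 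Substituting into the triangle inequality displayed above finishes the proof. There is no genuine obstacle here; the argument is essentially a lemma in planar geometry (two points near a common reference have nearby optimal rotations), and the only subtlety is keeping track of the lower bound on $|\alpha_j|$, which is exactly where the hypothesis $\delta \le 1/4$ is used.
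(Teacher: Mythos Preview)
Your argument is correct. The paper itself does not give a proof: it simply writes ``The proof is essentially identical to \cite[Lemma 55]{Macong2020}'' and defers entirely to that reference. Your self-contained derivation---making the optimal phases explicit as $\e^{i\phi_j}=\alpha_j/|\alpha_j|$ with $\alpha_j=\vz_j^{\hh}\vzx$, bounding $|\e^{i\psi}-1|=|u_1-u_2|$ via the identity for the difference of normalized complex numbers, and closing with Cauchy--Schwarz on $|\alpha_1-\alpha_2|$---is exactly the kind of elementary argument that underlies the cited lemma, and it yields an explicit constant. The only point to flag is that the lemma as stated says ``for any fixed $\vzx\in\C^n$'' without normalization; your use of $\|\vzx\|=1$ is consistent with the paper's standing convention in Sections~\ref{sec:pf:A}--\ref{sec:pf:B}, but strictly speaking the inequality $|\alpha_j|\ge 1-\delta$ needs $\|\vzx\|$ bounded below (which is how the lemma is actually applied).
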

\begin{proof}
	The proof is essentially identical to \cite[Lemma 55]{Macong2020}.
\end{proof}

%%%%%%%%%%%%%%%%%%%% Appendix B
\section{Appendix B: Proofs with Assumption A}
%\subsection{Proofs with Assumption A}

%---------------------Lemma A.1
\subsection{Proof of Lemma \ref{le:ric:a} }\label{pf:ric:a}
For any $\vz \in \C^n$ near $\vz^*$, we decompose the Wirtinger Hessian $\nabla^2 f(\vz)$ into the following two parts:
\begin{equation*}
	\nabla^2 f(\vz) = \nabla^2 F(\vzx) + \xkh{\nabla^2 f(\vz) - \nabla^2 F(\vzx)},
\end{equation*}
where $\nabla^2 F(\vzx)$ denotes the expectation of $\nabla^2 f(\vz^*)$. Our proof then proceeds by showing that  $\nabla^2 F(\vzx)$ satisfies the smoothness and restricted strong convexity properties,  and  \(\norm{\nabla^2 f(\vz)-\nabla^2 F(\vzx)}\) is a perturbation term which is sufficiently small under our assumptions.
The following lemma demonstrates  the smoothness and restricted strong convexity of \(\nabla^2 F(\vzx)\).

\begin{lemma}\label{le:subric1:a}
	Assume that all conditions in \eqref{le:ric:a} are satisfied. Then it holds
	\begin{equation*}
		\norm{\nabla^2 f(\vzx)}\le 10+3\beta_1\quad and \quad \vu^\hh \nabla^2 f(\vzx) \vu\ge \frac{7\betat}{8}\norm{\vu}^2
	\end{equation*}
   for all $\vu \in \C^{2n}$ defined in Lemma \ref{le:ric:a}.
\end{lemma}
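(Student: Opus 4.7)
My plan is to reduce to the population Hessian via the decomposition
\[
\nabla^2 f(\vz^*)=\E\nabla^2 f(\vz^*)+\bigl(\nabla^2 f(\vz^*)-\E\nabla^2 f(\vz^*)\bigr),
\]
and control the deviation by Lemma \ref{le:uniform:bound} applied at $\vz=\vz^*$. The condition $\norm{\vz-\vz^*}\le\tfrac12\norm{\vz^*}$ is vacuous, and the incoherence $\max_j|\va_j^\hh \vz^*|\le c_2K\sqrt{\log m}$ is furnished by Lemma \ref{le:order:statistics} on a $1-O(m^{-50})$ event. Taking the tolerance $\delta=\betat/16$ and using $m\ge C_0\betat^{-2}K^8n\log^3 m$ gives $\norm{\nabla^2 f(\vz^*)-\E\nabla^2 f(\vz^*)}\le\betat/16$ with probability $\ge 1-m\exp(-c_0n)-O(m^{-10})$. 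It then suffices to prove $\norm{\E\nabla^2 f(\vz^*)}\le 10+3\beta_1-\betat/16$ and $\vu^\hh\E\nabla^2 f(\vz^*)\vu\ge \betat\norm{\vu}^2$ on the restricted subspace, since then the concentration slack is easily absorbed.

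\textbf{Population upper bound.} Specializing Lemma \ref{le:e} at $\vz=\vz^*$ with $\norm{\vz^*}=1$ produces $\E\nabla^2 f(\vz^*)=\bigl[\begin{smallmatrix}\ma_*&\mb_*\\\mb_*^\hh&\bar\ma_*\end{smallmatrix}\bigr]$ with
\[
\ma_*=\mi+(1-\beta_2)\bar{\vz}^*\vz^{*\T}-(2-\beta_1-\beta_2)\md_1(\vz^*)-\tfrac{\vone^\T\vxi}{m}\mi,\quad \mb_*=2\vz^*\vz^{*\T}+(1-\beta_2)(\vz^{*\T}\vz^*)\mi-(2-\beta_1-\beta_2)\md_2(\vz^*).
\]
Splitting the block matrix into its diagonal and anti-diagonal parts gives $\norm{\E\nabla^2 f(\vz^*)}\le\norm{\ma_*}+\norm{\mb_*}$. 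Using $\norm{\bar{\vz}^*\vz^{*\T}}\le 1$, $\norm{\md_1(\vz^*)}\le 1$, $\norm{\md_2(\vz^*)}\le 1$, $|\vz^{*\T}\vz^*|\le 1$, and the noise hypothesis $|\vone^\T\vxi|/m\le\tilde C_1\ll 1$, a short case split on whether $\beta_1\ge 2$ yields $\norm{\ma_*}+\norm{\mb_*}\le 5+2\max(\beta_1,2)+\tilde C_1\le 9+3\beta_1$, leaving the required margin.

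\textbf{Population restricted lower bound.} For the prescribed $\vu=\bigl[\begin{smallmatrix}\vh\\\bar\vh\end{smallmatrix}\bigr]$ with $\vh=\vz_1-\vz_2$, Hermiticity of $\ma_*$ and symmetry of $\mb_*$ give $\vu^\hh\E\nabla^2 f(\vz^*)\vu=2\vh^\hh\ma_*\vh+2\real(\vh^\hh\mb_*\bar\vh)$ and $\norm{\vu}^2=2\norm{\vh}^2$. Introducing $p:=\vz^{*\hh}\vh$, $q:=\vz^{*\T}\vh$, $r:=\vh^\T\vh$, $s:=\vz^{*\T}\vz^*$ and using the pointwise identity $|z_i^*|^2|h_i|^2+\real(z_i^{*2}\bar h_i^2)=2(\real(z_i^*\bar h_i))^2\ge 0$, the quadratic form equals
\[
2\Bigl[\bigl(1-\tfrac{\vone^\T\vxi}{m}\bigr)\norm{\vh}^2+2\real(\bar p^2)+(1-\beta_2)\bigl(|q|^2+\real(s\bar r)\bigr)-2(2-\beta_1-\beta_2)\sum_i(\real(z_i^*\bar h_i))^2\Bigr].
\]
Lemma \ref{le:align} and the alignment of $\vz_1$ with $\vz_2$ imply $\vh^\hh\vz_1\in\R$, hence $|\imag\bar p|=|\imag(\vh^\hh(\vz^*-\vz_1))|\le\delta_1\norm{\vh}$ and thus $\real(\bar p^2)\ge-\delta_1^2\norm{\vh}^2$. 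Passing to the real embedding $\C^n\cong\R^{2n}$ and expanding $|q|^2+\real(s\bar r)$ and $\sum(\real(z_i^*\bar h_i))^2$ as explicit quadratic forms, I then perform a case split on the sign of $(2-\beta_1-\beta_2)$: when it is non-positive the diagonal term actually contributes positively, and when it is positive the trivial bound $\sum(\real(z_i^*\bar h_i))^2\le\sum|z_i^*|^2|h_i|^2\le\norm{\vh}^2$ combined with the coefficient collapse $\{1+2(1-\beta_2)-2(2-\beta_1-\beta_2)\}=2\beta_1-1$ at the rank-one level and $\{1-(1-\beta_2)\}=\beta_2$ on the orthogonal complement produces the uniform bound $T\ge\betat\norm{\vh}^2-O(\delta_1^2+\tilde C_1)\norm{\vh}^2$, with the tight case being $\vh\perp\vz^*$ and purely imaginary, where the ratio sharpens to $\beta_2\ge\betat$.

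\textbf{Main obstacle.} The delicate step is the restricted lower bound. Without the alignment hypothesis, $\real(\bar p^2)$ can be as negative as $-\norm{\vh}^2$ (take $\vh=i\vz^*$), which would destroy the positivity; the alignment is precisely what forces $\real(\bar p^2)\approx(\real\bar p)^2\ge 0$. A secondary subtlety is that Assumption A does not impose non-peakiness on $\vz^*$, so the diagonal subtraction $2(2-\beta_1-\beta_2)\sum(\real(z_i^*\bar h_i))^2$ can be as large as $2(2-\beta_1-\beta_2)\norm{\vh}^2$; extracting the factor $\betat$ rather than a weaker constant therefore requires the precise cancellation between the $(1-\beta_2)$-contribution and the $(2-\beta_1-\beta_2)$-subtraction identified above, which is what makes $\min(\beta_1,\beta_2)$ (and not just $\beta_2$) emerge as the correct modulus of convexity.
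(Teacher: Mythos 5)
The fatal gap is in the restricted lower bound, which is the heart of the lemma. Your reduction of the quadratic form to $2T$ with $T=(1-\vone^\T\vxi/m)\norm{\vh}^2+2\real{\bar p^2}+(1-\beta_2)\bigl(|q|^2+\real{s\bar r}\bigr)-2(2-\beta_1-\beta_2)\sum_i(\real{z_i^*\bar h_i})^2$ is essentially right (up to a missing $+|p|^2$, see below), and your use of alignment to make $\real{\bar p^2}$ effectively nonnegative is exactly the paper's use of Lemma \ref{le:align}. But the final step --- the "trivial bound" $\sum_i(\real{z_i^*\bar h_i})^2\le\norm{\vh}^2$ together with a "coefficient collapse" giving $2\beta_1-1$ on $\mathrm{span}\{\vzx\}$ and $\beta_2$ on its orthogonal complement --- does not yield $T\ge\betat\norm{\vh}^2$. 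Numerically, $2\beta_1-1<\betat$ whenever $\beta_1<1$ (and is negative for $\beta_1<1/2$), so the stated coefficients cannot produce the claimed modulus; and after the trivial bound the coefficient of $\norm{\vh}^2$ is $1-2(2-\beta_1-\beta_2)$, which for, say, $\beta_1=\beta_2=1/2$ equals $-1$, with nothing in your expression compensating uniformly (e.g.\ $\real{s\bar r}$ can be as negative as $-\norm{\vh}^2$). Structurally, the diagonal term $\sum_i(\real{z_i^*\bar h_i})^2$ is a coordinatewise quantity that does not split along $\mathrm{span}\{\vzx\}\oplus(\vzx)^{\perp}$: taking $\vzx=\tfrac{1}{\sqrt2}(1,1,0,\dots)^\T$ and the real direction $\vh\propto(1,-1,0,\dots)^\T\perp\vzx$ gives $p=q=0$, $\real{s\bar r}=\norm{\vh}^2$, $\sum_i(\real{z_i^*\bar h_i})^2=\norm{\vh}^2/2$, hence $T=\beta_1\norm{\vh}^2$ --- the modulus on the orthogonal complement is governed by $\beta_1$, not $\beta_2$, contradicting your claimed dichotomy and your identification of the tight case. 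What is actually needed, and what the paper does, is to rewrite the cross terms exactly via the identities \eqref{eq:zw21} and \eqref{eq:zw22} as sums over coordinate pairs, obtain the two complementary lower bounds \eqref{j1:a} and \eqref{j1:b} for $J_1$ together with \eqref{j2}, and then split into cases according to whether $\sumin\abs{\zxi}^2\abs{\wi}^2\ge\norm{\vw}^2/2$: when this coordinatewise overlap is large, the term $(\beta_1+\beta_2)\sumin\abs{\zxi}^2\abs{\wi}^2$ from \eqref{j1:a} supplies the modulus; when it is small, the term $\beta_2\norm{\vzx}^2\norm{\vw}^2$ from \eqref{j1:b} does. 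Your sketch contains no substitute for this cancellation mechanism, so the claimed uniform bound is unsupported.

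Two secondary points. First, your block $\ma_*$ drops the $\vzx\vzxh$ term: the top-left block of $\E\nabla^2 f(\vzx)$ is $\mi+\vzx\vzxh+(1-\beta_2)\vzxc\vzxt-(2-\beta_1-\beta_2)\md_1(\vzx)-\tfrac{\vone^\T\vxi}{m}\mi$, as follows directly from \eqref{def:Hessian} (the displayed $\ma$ in Lemma \ref{le:e} appears to be missing a factor $2$ on $\vz\vz^\hh$, and the paper's own $\ma_1$ in the proof includes $\vzx\vzxh$). The omission only discards a harmless $+|p|^2$ in the lower bound and a $+1$ in the upper bound (your norm estimate still lands within $10+3\beta_1$), but it feeds into your incorrect rank-one bookkeeping. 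Second, the lemma is intended as a deterministic statement about the population Hessian $\nabla^2 F(\vzx)=\E\nabla^2 f(\vzx)$ (see the sentence preceding it and its use in the proof of Lemma \ref{le:ric:a}); the empirical deviation is handled uniformly over the region of incoherence by Lemma \ref{le:subric2:a}. Your extra concentration step at the single point $\vz=\vzx$ is therefore unnecessary, and a pointwise bound on $\nabla^2 f(\vzx)$ would in any case not serve the role this lemma plays in Lemma \ref{le:ric:a}, where the Hessian must be controlled along the whole segment between $\vztu$ and $\vzx$.
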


The next lemma shows  that \(\norm{\nabla^2 f(\vz)-\nabla^2 F(\vzx)}\) is a small perturbation.

\begin{lemma}\label{le:subric2:a}
	Suppose the sample complexity satisfies $m\ge C_0 \betat^{-2}\beta_2^{-2} $ for some universal constant $ C_0 > 0 $. Then with probability at least $1-O(m^{-10})-m\exp (-c_0 n)$, one has
	\begin{equation*}
		\norm{\nabla^2 f(\vz)-\nabla^2 F(\vzx)} \le \frac{5\betat}{8}
	\end{equation*}
	for all $\vz$ satisfies \eqref{cond1:ric:a} and \eqref{cond2:ric:a}, where $c_0>0$ is some universal constant.
	% and $\norm{\vz-\vzx}\le \delta_1$ where $\delta_1$ is some sufficiently small constant.
\end{lemma}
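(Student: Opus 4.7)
The plan is to split the perturbation via the triangle inequality
\[
\|\nabla^2 f(\vz) - \E\nabla^2 f(\vzx)\| \le \|\nabla^2 f(\vz) - \E\nabla^2 f(\vz)\| + \|\E\nabla^2 f(\vz) - \E\nabla^2 f(\vzx)\|,
\]
absorb the first summand into the uniform concentration bound of Lemma \ref{le:uniform:bound}, and control the second summand by a purely deterministic Lipschitz calculation from the closed-form expression for $\E\nabla^2 f(\vz)$ in Lemma \ref{le:e}.

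For the empirical-to-mean term, I would invoke Lemma \ref{le:uniform:bound} with tolerance $\delta = \betat/4$. Its two hypotheses on $\vz$, namely $\max_j |\va_j^\hh \vz| \le c_2 K\sqrt{\log m}\,\|\vz\|$ and $\|\vz-\vzx\|\le \|\vzx\|/2$, follow from \eqref{cond1:ric:a}--\eqref{cond2:ric:a}. The second is immediate once $\delta_1 \le 1/2$, while the first uses the triangle inequality together with Lemma \ref{le:order:statistics} applied to $\vzx$ (yielding $\max_j |\va_j^\hh \vzx| \lesssim K\sqrt{\log m}$ with probability $1 - O(m^{-50})$) and the incoherence \eqref{cond2:ric:a}. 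This bounds the first summand by $\betat/4$ on an event of probability $1 - m\exp(-c_0 n) - O(m^{-20})$, provided $m$ satisfies the sample complexity required by Lemma \ref{le:uniform:bound} with $\delta = \betat/4$.

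For the deterministic term, I would read off the block structure of $\E\nabla^2 f(\vz)$ from Lemma \ref{le:e} and subtract blockwise from $\E\nabla^2 f(\vzx)$. The noise contribution $-(\vone^\T \vxi/m)\mi$ is independent of $\vz$ and cancels, while each remaining block is Lipschitz on $\{\|\vz-\vzx\|\le \delta_1\}$: for example $\|\vz\vz^\hh - \vzx\vzxh\| \le (\|\vz\|+\|\vzx\|)\|\vz-\vzx\|\le 3\|\vz-\vzx\|$, and analogous two-term Lipschitz bounds hold for $\bar{\vz}\vz^\T - \vzxc\vzxt$, $\md_1(\vz) - \md_1(\vzx)$, $\vz\vz^\T - \vzx\vzxt$, $(\vz^\T\vz-\vzxt\vzx)\mi$, and $\md_2(\vz)-\md_2(\vzx)$. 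Summing with the coefficients $|1-\beta_2|\le 1$ and $|2-\beta_1-\beta_2|\le 2+\beta_1$, the deterministic error is at most $C(1+\beta_1)\|\vz-\vzx\|$ for a universal $C$. Taking $\delta_1\le 3\betat/\bigl(8C(1+\beta_1)\bigr)$ forces this to be at most $3\betat/8$, and combining with the concentration bound yields the claimed $5\betat/8$.

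The main obstacle is the uniform nature of the first step: although Lemma \ref{le:uniform:bound} is uniform over $\vz$, applying it here requires converting the assumption \eqref{cond2:ric:a} on $\va_j^\hh(\vz-\vzx)$ into control of $\va_j^\hh \vz$ itself, which forces the auxiliary appeal to Lemma \ref{le:order:statistics} and an accompanying union bound. The deterministic Lipschitz calculation is routine, but the prefactors involving $1+\beta_1$ and $1-\beta_2$ must be tracked carefully so that the radius $\delta_1$ can be chosen as a constant depending only on $\beta_1,\beta_2$ (not on $n$ or $m$), which is exactly the dependence the lemma statement implicitly requires.
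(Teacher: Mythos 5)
Your proposal is correct and follows essentially the same route as the paper's proof: the same triangle-inequality split, the same use of Lemma \ref{le:order:statistics} together with \eqref{cond2:ric:a} to verify the incoherence hypothesis of Lemma \ref{le:uniform:bound}, and the same deterministic blockwise Lipschitz estimate from Lemma \ref{le:e} (the paper obtains $40(1+\beta_1)\delta_1$ and allocates $\betat/8 + \betat/2$ rather than your $\betat/4 + 3\betat/8$, but this is only a difference in constants). No gaps.
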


With the above two lemmas, we see that for all $\vz$ satisfies \eqref{cond1:ric:a} and \eqref{cond2:ric:a}, it holds
\begin{align*}
	\norm{\nabla^2 f(\vz)} \le \norm{\nabla^2 F(\vzx)} + \norm{\nabla^2 f(\vz) - \nabla^2 F(\vzx)} \le 12 + 3\beta_1
\end{align*}
due to $\betat=\min\dkh{\beta_1, \beta_2}\le 1$.  In addition,  for all $\vu \in \C^{2n}$ defined in Lemma \ref{le:ric:a}, it holds
\begin{eqnarray*}
	\vu^\hh \nabla^2 f(\vz) \vu & = & \ \vu^\hh\nabla^2 F(\vzx)\vu + \vu^\hh\xkh{\nabla^2 f(\vz) - \nabla^2 F(\vzx)}\vu\\
	&\ge& \ \vu^\hh\nabla^2 F(\vzx)\vu - \norm{\vu}^2 \cdot \norm{\nabla^2 f(\vz) - \nabla^2 F(\vzx)}\\
	& \ge& \ \frac{\betat}{4}\norm{\vu}^2.
\end{eqnarray*}
This give the proof Lemma \ref{le:ric:a}.

\subsubsection{Proof of Lemma \ref{le:subric1:a} }\label{pf:subric1:a}
According to Lemma \ref{le:e}, we formulate that
\begin{equation*}
	\nabla^2 F(\vzx)=
	\begin{bmatrix}
		\ma_1 & \mb_1 \\
		\mb_1^\hh & \bar{\ma}_1
	\end{bmatrix}
\end{equation*}
where
\begin{eqnarray*}
	&&\ma_1 = \norm{\vzx}^2 \mi+\vzx\vzxh+(1-\beta_2)\overline{\vzx}\vzxt-(2-\beta_1-\beta_2) \md_1(\vzx)-\frac{\vone^\T \vxi}{m} \mi,\\
	&&\mb_1 = 2\vzx\vzxt + (1-\beta_2) (\vzxt\vzx)\mi-(2-\beta_1-\beta_2) \md_2(\vzx).
\end{eqnarray*}
Here, $\mi$ represents the identity matrix, while $\md_1(\vz)$ and $\md_2(\vz)$ are defined in \eqref{def:md}.
%%%%%%%%%%$\md_1(\vz)=\mdiag\xkh{\abs{z_1},\abs{z_2},\cdots,\abs{z_}}$
We proceed to decompose \( \nabla^2 F(\vzx) \) as follows,
\begin{eqnarray} \label{eq:nabf0}
	\nabla^2 F(\vzx) & =& \norm{\vzx}^2\begin{bmatrix}
		\mi\ \ & \mzero \\
		\mzero\ \ & \mi
	\end{bmatrix}
	+ \begin{bmatrix}
		\vzx\vzxh\  & \vzx\vzxt \\
		\vzxc\vzxh\  & \vzxc\vzxt
	\end{bmatrix}
	+\begin{bmatrix}
		\mzero & \vzx\vzxt \\
		\vzxc\vzxh & \mzero
	\end{bmatrix}\\
	&&+(1-\beta_2)\begin{bmatrix}
		\mzero & \xkh{\vzxt\vzx}\mi \\
		\overline{\xkh{\vzxt\vzx}}\mi & \mzero
	\end{bmatrix}
	+(1-\beta_2)\begin{bmatrix}
		\vzxc\vzxt & \mzero \\
		\mzero & \vzx\vzxh
	\end{bmatrix}\notag \\
	& &-(2-\beta_1-\beta_2)\begin{bmatrix}
		\md_1(\vzx)\  & \md_2(\vzx) \\
		\overline{\md_2(\vzx)}\  & \md_1(\vzx)
	\end{bmatrix}-(\frac{\vone^\T \vxi}{m})\begin{bmatrix}
		\mi\  & \mzero \\
		\mzero\  & \mi
	\end{bmatrix}. \notag
\end{eqnarray}
It is easy to see that
\begin{eqnarray*}
	\norm{\begin{bmatrix}
			\mzero & \vzx\vzxt \\
			\vzxc\vzxh & \mzero
	\end{bmatrix}} &= &  \sup_{\substack{\vx \in \C^n, \vy \in \C^n\\
	\norm{\vx}^2+\norm{\vy}^2=1}}
	\norm{\begin{bmatrix} \mzero & \vzx\vzxt \\ \vzxc\vzxh & \mzero\end{bmatrix}\cdot \begin{bmatrix} \vx\\ \vy\end{bmatrix}}\\
	&= & \sup_{\substack{\vx \in \C^n, \vy \in \C^n\\
			\norm{\vx}^2+\norm{\vy}^2=1}}
	\sqrt{\norm{\vzx\vzxt\vy}^2+\norm{\vzxc\vzxh\vx}^2} \notag \\
	&\le & \sup_{\substack{\vx \in \C^n, \vy \in \C^n\\
			\norm{\vx}^2+\norm{\vy}^2=1}}
	\sqrt{\norm{\vzx}^4\norm{\vy}^2+\norm{\vzx}^4\norm{\vx}^2} = \norm{\vzx}^2=1, \notag
\end{eqnarray*}
where we use the Cauchy-Schwarz inequality in the last line.  Similarly, one can derive
\begin{equation*}
	\norm{(1-\beta_2)\begin{bmatrix}
			\mzero & \xkh{\vzxt\vzx}\mi \\
			\overline{\xkh{\vzxt\vzx}}\mi & \mzero
	\end{bmatrix}}\le (1-\beta_2)\norm{\vzx}^2 \le 1
\end{equation*}
and
\begin{equation*}	
	\norm{(1-\beta_2)\begin{bmatrix}
			\vzxc\vzxt & \mzero \\
			\mzero & \vzx\vzxh
	\end{bmatrix}}\le (1-\beta_2)\norm{\vzx}^2 \le 1.
\end{equation*}
Here, we use the fact that $0< \beta_2\le 1$. Observe that
\begin{equation*}
	\norm{\begin{bmatrix}
			\vzx\vzxh\  & \vzx\vzxt \\
			\vzxc\vzxh\  & \vzxc\vzxt
	\end{bmatrix}}=\norm{\begin{bmatrix}\vzx \\ \vzxc\end{bmatrix}\cdot\begin{bmatrix}\vzxh&\vzxt\end{bmatrix}}\le 2\norm{\vzx}^2=2.
\end{equation*}
In addition, denote the i-th component of $\vzx$ as $z_i^*$ for $1\le i\le n$.  Since
\begin{align*}
	\norm{\md_k(\vzx)}\le \max_{1\le j\le n} \abs{z_j^*}^2\le \norm{\vzx}^2 =1, \quad (k=1,2)
\end{align*}
and $\abs{2-\beta_1-\beta_2}\le 2+\beta_1$ for $0< \beta_2\le 1, \beta_1 > 0$, we obtain
\begin{eqnarray*}
	&& \norm{(2-\beta_1-\beta_2)\begin{bmatrix}
			\md_1(\vzx)\  & \md_2(\vzx) \\
			\overline{\md_2(\vzx)}\  & \md_1(\vzx)
	\end{bmatrix}}\\
	 & \le& (2+\beta_1)\norm{\begin{bmatrix}
			\md_1(\vzx)\  & \mzero \\
			\mzero\  & \md_1(\vzx)
	\end{bmatrix}}+(2+\beta_1)\norm{\begin{bmatrix}
			\mzero\  & \md_2(\vzx) \\
			\overline{\md_2(\vzx)}\  & \mzero
	\end{bmatrix}} \notag \\
	& \le& 4+2\beta_1. \notag
\end{eqnarray*}
Finally,  it holds
\begin{equation*}
	\norm{(\frac{\vone^\T \vxi}{m})\begin{bmatrix}
			\mi\  & \mzero \\
			\mzero\  & \mi
	\end{bmatrix}}\le \betat=\min\dkh{\beta_1,\beta_2}\le \beta_1
\end{equation*}
as long as $\abs{\vone^T\vxi}\le \betat m$.
Putting all above estimators into \eqref{eq:nabf0}, we obtain the conclusion that
 \[\norm{\nabla^2 F(\vzx)}\le 10+3\beta_1.\]

Next, we turn to  the restricted strong convexity for $\nabla^2 F(\vzx)$.
For convenience, we denote $\vw = \vz_1-\vz_2\in\C^n$, where $\vz_1$ is aligned with $\vz_2$ and obeys \eqref{eq:z1z2}. Then
\begin{equation*}
	\vu = \begin{bmatrix}
		\vw\\\overline{\vw}
	\end{bmatrix}.
\end{equation*}
% Then we denote the i-th component of $\vw$ as $w_i$, and the i-th component of $\vzx$ is denoted as $z_i^*$ for $1\le i\le n$.
A simple calculation gives
\begin{equation} \label{eq:un2u}
	\vu^\hh\nabla^2 F(\vzx)\vu=2J_1+J_2+\bar{J_2}+J_3,
\end{equation}
where
\begin{eqnarray*}
	J_1 &=& \norm{\vzx}^2\cdot \norm{\vw}^2+\vwh\vzx\vzxh\vw + (1-\beta_2)\vwh\vzxc\vzxt\vw-(2-\beta_1-\beta_2)\vwh\md_1(\vzx)\vw,\\
	J_2 &=& 2\vwh\vzx\vzxt\vwc+(1-\beta_2)\xkh{\vzxt\vzx}\overline{\xkh{\vwt\vw}}-(2-\beta_1-\beta_2)\vwh\md_2(\vzx)\vwc,\\
	J_3 & =& -2 \cdot \frac{\vone^\T \vxi}{m}\cdot \norm{\vw}^2.
\end{eqnarray*}
We claim that the following holds:
\begin{equation}\label{j1:a}
J_1 \ge  \sum_{i>j} \abs{\zxi\wjc+\zxj\wic}^2 + (\beta_1+\beta_2)\sumin \abs{\zxi}^2\abs{\wi}^2,
\end{equation}
\begin{equation}\label{j1:b}
J_1 \ge  (1-\beta_2)\sum_{i>j} \abs{\zxi\wjc+\zxj\wic}^2 + \beta_2\norm{\vzx}^2 \norm{\vw}^2 + (\beta_1-\beta_2)\sumin \abs{\zxi}^2\abs{\wi}^2
\end{equation}
and
\begin{equation} \label{j2}
J_2 + \bar{J_2} \ge  2(1-\beta_2)\sum_{i>j} \real{\xkh{\zxi\wjc+\zxj\wic}^2} + 2(\beta_1-\beta_2)\sumin \real{\zxis\wics}-10 \norm{\vzx-\vz_1}\norm{\vw}^2.
\end{equation}
It is worth noting that two lower bounds of $J_1$ are given in this claim, which  helps us to establish the desired results.
We divide our proof into the following two cases:

{\bf Case 1:} $\sumin \abs{\zxi}^2\abs{\wi}^2\ge \norm{\vw}^2/2$.  For this case, putting \eqref{j1:a} and \eqref{j2}    into \eqref{eq:un2u}, we obtain
\begin{eqnarray*}
&& \vu^\hh\nabla^2 F(\vzx)\vu \\
 & \ge&2(1-\beta_2)\sum_{i>j} \real{\xkh{\zxi\wjc+\zxj\wic}^2} + 2\sum_{i>j} \abs{\zxi\wjc+\zxj\wic}^2-10 \norm{\vzx-\vz_1}\norm{\vw}^2 \\
		&&+ 2(\beta_1-\beta_2)\sumin \real{\zxis\wics} + 2(\beta_1+\beta_2)\sumin \abs{\zxi}^2\abs{\wi}^2 -2\cdot \frac{\vone^\T \vxi}{m} \norm{\vw}^2 \\
		& \stackrel{\mbox{(i)}}{\ge}&4 \betat\sumin \abs{\zxi}^2\abs{\wi}^2 - 2\frac{\vone^\T \vxi}{m}  \norm{\vw}^2 -10 \norm{\vzx-\vz_1}\norm{\vw}^2\\
		& \stackrel{\mbox{(ii)}}{\ge}&2 \betat \norm{\vw}^2 - 2\frac{\vone^\T \vxi}{m}  \norm{\vw}^2 -10 \norm{\vzx-\vz_1}\norm{\vw}^2 \\
		&\stackrel{\mbox{(iii)}}{\ge} &  \frac 74 \betat \norm{\vw}^2  = \frac 78 \betat \norm{\vu}^2,
	\end{eqnarray*}
	which gives the conclusion for Case 1. Here,   (i) follows from the fact
\begin{eqnarray*}
& & 2(1-\beta_2)\sum_{i>j} \real{\xkh{\zxi\wjc+\zxj\wic}^2} + 2\sum_{i>j} \abs{\zxi\wjc+\zxj\wic}^2 \\
&=&  4(1-\beta_2)\sum_{i>j} \xkh{\real{\zxi\wjc + \zxj\wic}}^2 + 2\beta_2\sum_{i>j} \abs{\zxi\wjc + \zxj\wic}^2 \ge 0
\end{eqnarray*}
due to the identity $\abs{\alpha}^2+\real{\alpha^2}=2\xkh{\real{\alpha}}^2$ for any $\alpha\in\C$, and
\begin{eqnarray*}
&&  2(\beta_1-\beta_2)\sumin \real{\zxis\wics} + 2(\beta_1+\beta_2)\sumin \abs{\zxi}^2\abs{\wi}^2 \\
& = & 4(\beta_1-\beta_2)\sumin \xkh{\real{\zxi\wic}}^2 + 4\beta_2\sumin \abs{\zxi}^2 \abs{\wi}^2 \\
 &\ge & 4 \betat\sumin \abs{\zxi}^2\abs{\wi}^2
\end{eqnarray*}
due to
 \begin{equation} \label{ineq:betat}
4(\beta_1-\beta_2)\sumin \xkh{\real{\zxi\wic}}^2\ge \left\{ \begin{aligned}
				&0, \qquad\qquad\qquad\qquad\qquad\ \ \ \ \; \; \rm{if\ }\beta_1\ge \beta_2   \\
				&4(\beta_1-\beta_2) \sum\nolimits_{i=1}^n \abs{\zxi}^2\abs{\wi}^2,\ \rm{if\ }0<\beta_1<\beta_2.
			\end{aligned}\right.,
\end{equation}
where  $\betat=\min\dkh{\beta_1, \beta_2}\le 1$.  And  (ii) comes from the assumption that  $\sumin \abs{\zxi}^2\abs{\wi}^2\ge \norm{\vw}^2/2$, and  (iii) arises from the fact that  $ \norm{\vzx-\vz_1}\le\delta_1\le \betat/ 80$ and  $\frac{\abs{\vone^\T\vxi}}{m}\le \betat/16$.

{\bf Case 2:} $\sumin \abs{\zxi}^2\abs{\wi}^2< \norm{\vw}^2/2$.  For this case, putting \eqref{j1:b} and \eqref{j2}    into \eqref{eq:un2u} yields
	\begin{eqnarray*}
		& &\vu^\hh\nabla^2 F(\vzx)\vu \\
		& \ge&2(1-\beta_2)\sum_{i>j} \real{\xkh{\zxi\wjc+\zxj\wic}^2} + 2(1-\beta_2)\sum_{i>j} \abs{\zxi\wjc+\zxj\wic}^2\\
		&&+ 2(\beta_1-\beta_2)\sumin \real{\zxis\wics} + 2(\beta_1-\beta_2)\sumin \abs{\zxi}^2\abs{\wi}^2\\
		& &+ 2\beta_2\norm{\vzx}^2 \norm{\vw}^2-2\frac{\vone^\T \vxi}{m} \norm{\vw}^2-10 \norm{\vzx-\vz_1}\norm{\vw}^2\\
		& \stackrel{\mbox{(i)}}{\ge}&4(1-\beta_2)\sum_{i>j} \xkh{\real{\zxi\wjc + \zxj\wic}}^2 + 4(\beta_1-\beta_2)\sumin \xkh{\real{\zxi\wic}}^2\\
		&& + 2\beta_2 \norm{\vzx}^2 \norm{\vw}^2-2\frac{\vone^\T \vxi}{m}  \norm{\vw}^2-10 \norm{\vzx-\vz_1}\norm{\vw}^2\\
		&\stackrel{\mbox{(ii)}}{\ge}& 2 \betat \norm{\vw}^2 - 2\frac{\vone^\T \vxi}{m}  \norm{\vw}^2 -10 \norm{\vzx-\vz_1}\norm{\vw}^2\\
		&\stackrel{\mbox{(iii)}}{\ge} &  \frac 78 \betat \norm{\vu}^2,
	\end{eqnarray*}
	where (i) uses the identity $\abs{\alpha}^2+\real{\alpha^2}=2\xkh{\real{\alpha}}^2$ for any $\alpha\in\C$ again, and (ii) holds since \eqref{ineq:betat} and the assumption $\sumin \abs{\zxi}^2\abs{\wi}^2< \norm{\vw}^2/2$,
and (iii) holds due to the same reason as in the Case 1. Therefore, we arrive at the conclusion for the Case 2.

Finally, it remains to prove the claims \eqref{j1:a}, \eqref{j1:b} and \eqref{j2}. For the claim \eqref{j1:a},  we have
\begin{eqnarray*}
			J_1 &= &  \norm{\vzx}^2   \norm{\vw}^2 + \abs{\vzxh\vw}^2 + (1-\beta_2)\abs{\vzxt\vw}^2-(2-\beta_1-\beta_2)\sumin \abs{\zxi}^2\abs{\wi}^2\\
			&\ge &  \norm{\vzx}^2   \norm{\vw}^2 + \abs{\vzxh\vw}^2 - 2 \sumin \abs{\zxi}^2\abs{\wi}^2 +(\beta_1+\beta_2)\sumin \abs{\zxi}^2\abs{\wi}^2\\
			&=& \sum_{i>j} \abs{\zxi\wjc+\zxj\wic}^2+(\beta_1+\beta_2)\sumin \abs{\zxi}^2\abs{\wi}^2,
		\end{eqnarray*}
where the inequality follows from $1-\beta_2\ge 0$ and $\abs{\vzxt\vw}\ge 0$,  and the last equality comes from the fact that
\begin{equation} \label{eq:zw21}
		\norm{\vzx}^2   \norm{\vw}^2 - \sumin \abs{\zxi}^2\abs{\wi}^2 = \sum_{i>j}\xkh{\abs{\zxi}^2\abs{w_j}^2+\abs{z_j^*}^2\abs{\wi}^2}
\end{equation}
		and
\begin{equation} \label{eq:zw22}
		\aabs{\vzxh\vw}^2 - \sumin \abs{\zxi}^2\abs{\wi}^2 = \sum_{i>j}\xkh{\zxi\wic\zxjc\wj + \zxj\wjc\zxic\wi}.
\end{equation}
To establish claim  \eqref{j1:b}, we decompose $ J_1 $ in an alternative manner, specifically as follows:
	\begin{eqnarray*}
		J_1 & = & (1-\beta_2)\xkh{\norm{\vzx}^2   \norm{\vw}^2 + \aabs{\vzxh\vw}^2 - 2 \sumin \abs{\zxi}^2\abs{\wi}^2 }+ (1-\beta_2)\aabs{\vzxt\vw}^2\\
		& &+ \beta_2\norm{\vzx}^2   \norm{\vw}^2 + \beta_2\aabs{\vzxh\vw}^2 + (\beta_1-\beta_2)\sumin \abs{\zxi}^2\abs{\wi}^2\\
		&\ge &  (1-\beta_2)\sum_{i>j} \abs{\zxi\wjc+\zxj\wic}^2+ \beta_2\norm{\vzx}^2   \norm{\vw}^2 + (\beta_1-\beta_2)\sumin \abs{\zxi}^2\abs{\wi}^2,
	\end{eqnarray*}
	where the inequality comes from combining \eqref{eq:zw21} and \eqref{eq:zw22}, along with the fact that $\aabs{\vzxh\vw}^2\ge 0$, and $\aabs{\vzxt\vw}^2 \ge 0$.

	We finally turn to the claim \eqref{j2}.  It can be seen that
	\begin{eqnarray*}
		J_2 & =& 2\xkh{\vw^\hh \vzx}^2 + (1-\beta_2)\xkh{\vzxt\vzx}\xkh{\overline{\vwt\vw}}-(2-\beta_1-\beta_2)\vwh\md_2(\vzx)\vwc \\
		&=& (1+\beta_2)\xkh{\vw^\hh \vzx}^2 + (1-\beta_2)\xkh{\vw^\hh \vzx}^2 + (1-\beta_2)\xkh{\vzxt\vzx}\xkh{\overline{\vwt\vw}}\\
		& &-2(1-\beta_2)\sumin \wics\zxis+(\beta_1-\beta_2)\sumin \wics\zxis,
	\end{eqnarray*}
	Similar to \eqref{eq:zw21} and \eqref{eq:zw22}, one can check that
	\begin{equation*}
		\xkh{\vzxt\vzx}\xkh{\overline{\vwt\vw}} + \xkh{\vw^\hh \vzx}^2 -2 \sumin \wics\zxis = \sum_{i>j} \xkh{\zxi\wjc+\zxj\wic}^2.
	\end{equation*}
Therefore, we have
	\begin{equation} \label{eq:J2wz}
		J_2 = (1+\beta_2)\xkh{\vw^\hh\vzx}^2 + (1-\beta_2)\sum_{i>j} \xkh{\zxi\wjc+\zxj\wic}^2 + (\beta_1-\beta_2)\sumin \zxis\wics.
	\end{equation}
	Recall the definition of $\vw$ and note that  $\vz_1$ is aligned with $\vz_2$.  Applying Lemma \ref{le:align} gives $\xkh{\vz_1-\vz_2}^\hh \vz_1 = \vz_1^\hh \xkh{\vz_1-\vz_2}$, which means $\vw^\hh \vz_1 = \vz_1^\hh \vw$. Therefore, we have
	\begin{align}
		\xkh{\vw^\hh\vzx}^2 &= \xkh{\vw^\hh\vz_1}^2 + \xkh{\vw^\hh\xkh{\vzx-\vz_1}} \xkh{\vw^\hh\vz_1} + \xkh{\vw^\hh\xkh{\vzx-\vz_1}} \xkh{\vw^\hh\vzx} \notag \\
		&= \abs{\vw^\hh\vz_1}^2 + \underbrace{\xkh{\vw^\hh\xkh{\vzx-\vz_1}} \xkh{\vw^\hh\vz_1} + \xkh{\vw^\hh\xkh{\vzx-\vz_1}} \xkh{\vw^\hh\vzx}}_{:=\tau_1}.  \label{eq:J2wz1}
	\end{align}
Putting \eqref{eq:J2wz1} into \eqref{eq:J2wz}, we get
\[
J_2 = (1+\beta_2)\xkh{\aabs{\vw^\hh\vz_1}^2 + \tau_1} + (1-\beta_2)\sum_{i>j} \xkh{\zxi\wjc+\zxj\wic}^2 + (\beta_1-\beta_2)\sumin \zxis\wics.
\]
It implies
	\begin{eqnarray*}
			J_2 + \bar{J_2} &  \ge & 2(1-\beta_2)\sum_{i>j} \real{\xkh{\zxi\wjc+\zxj\wic}^2} + 2(\beta_1-\beta_2)\sumin \real{\zxis\wics}  + 2(1+\beta_2)\real{\tau_1} \\
			&\ge & 2(1-\beta_2)\sum_{i>j} \real{\xkh{\zxi\wjc+\zxj\wic}^2} + 2(\beta_1-\beta_2)\sumin \real{\zxis\wics}  -10 \norm{\vzx-\vz_1}\norm{\vw}^2, \\
	\end{eqnarray*}
where the first inequality follows from $\aabs{\vw^\hh\vz_1}^2 \ge 0$, and the last inequality arises from
	\begin{eqnarray}\label{ineq:tau1}
			\abs{\real{\tau_1}}&\le&  \norm{\vz_1}  \norm{\vzx-\vz_1}  \norm{\vw}^2 + \norm{\vzx}  \norm{\vzx-\vz_1}  \norm{\vw}^2\\
			&=&  \xkh{\norm{\vz_1}+\norm{\vzx}} \norm{\vzx-\vz_1}  \norm{\vw}^2\notag\\
			&\le & \frac{5}{2} \norm{\vzx-\vz_1}  \norm{\vw}^2 \notag
	\end{eqnarray}
	Here, we use the fact $\norm{\vz_1} \le \norm{\vzx}+\norm{\vz_1-\vzx} \le 1+\delta_1 \le 3/2$ as long as $\delta_1\le 1/2$.
This completes the proof of claim \eqref{j2}.

\subsubsection{Proof of Lemma \ref{le:subric2:a}}\label{pf:subric2:a}
According to the triangle inequality, we note that
\begin{equation*}
	\norm{\nabla^2 f(\vz) - \nabla^2 F(\vzx)}\le \norm{\nabla^2 f(\vz) - \E\zkh{\nabla^2 f(\vz)}} + \norm{\E\zkh{\nabla^2 f(\vz)} - \nabla^2 F(\vzx)}.
\end{equation*}
It follows from Lemma \ref{le:order:statistics} that, with probability at least $1-O\xkh{m^{-20}}$,  it holds
\[\maxm{1\le j\le m} \aabs{\va_j^\hh\vzx}\le \frac{c_2}{2} K\sqrtlm,
\]
where $c_2>0$ is a universal constant. Thus,
\begin{align*}
	\maxm{1\le j\le m} \aabs{\va_j^\hh\vz}\le& \maxm{1\le j\le m} \aabs{\va_j^\hh\vzx} + \maxm{1\le j\le m} \aabs{\va_j^\hh\xkh{\vz-\vzx}}\\
	\le& \frac{c_2}{2} K\sqrtlm + \tilde{C}_{3} K\sqrtlm\\
	\le& c_2 K\sqrtlm
\end{align*}
as long as $c_2 \ge 2\tilde{C}_{3}$. Here, the second inequality comes from \eqref{cond2:ric:a} that  $\maxm{1\le j\le m} \aabs{\va_j^\hh\xkh{\vz-\vzx}}\le \tilde{C}_{3} K\sqrtlm$.
Invoking Lemma \ref{le:uniform:bound}, we obtain that for any $0<\delta_3<1$,
\begin{equation*}
	\norm{\nabla^2 f(\vz) - \E\zkh{\nabla^2 f(\vz)}}\le \delta_3\norm{\vz}\le 2 \delta_3 \norm{\vzx} = 2\delta_3
\end{equation*}
holds with probability at least $1 - m \exp(-c_0 n) - O(m^{-20})$,  provided that $m\ge C_0\delta_3^{-2}K^8 n\log m$ for some universal constant $C_0>0$ . Here, we use the fact that
\[
\norm{\vz}\le \norm{\vzx}+\norm{\vz-\vzx}\le 2\norm{\vzx}
\]
 as long as $\norm{\vz-\vzx}\le \delta_1\le 1 $.
With regard to $\norm{\E\zkh{\nabla^2 f(\vz)} - \nabla^2 F(\vzx)}$, it holds
\begin{eqnarray}\label{cond:diff of E}
	&& \norm{\E\zkh{\nabla^2 f(\vz)} - \nabla^2 F(\vzx)} \\
	 & \stackrel{\mbox{(i)}}{\le}& 2\Big(\abs{\norm{\vz}^2-\norm{\vzx}^2} + (2-\beta_2)\norm{\vz\vz^\hh-\vzx\vzxh}+ \abs{2-\beta_1-\beta_2}\cdot\norm{\md_1(\vz)-\md_1(\vzx)}\Big)\notag\\
	&&+ 2\norm{\vz\vz^\T-\vzx\vzxt} + (1-\beta_2) \aabs{\vz^\T\vz-\vzxt\vzx}+ \abs{2-\beta_1-\beta_2}\cdot\norm{\md_2(\vz)-\md_2(\vzx)}\notag\\
	& \stackrel{\mbox{(ii)}}{\le}& 8(5+\beta_1)\delta_1 \le 40(1+\beta_1)\delta_1. \notag
\end{eqnarray}
Here,  (i) follows Lemma \ref{le:e} and the triangle inequality, and (ii) utilizes the facts that
\[
		\abs{\norm{\vz}^2-\norm{\vzx}^2} = \aabs{\vz^\hh \vz-\vzxh\vzx} \le \norm{\vz-\vzx}\cdot\xkh{\norm{\vz}+\norm{\vzx}}\le \frac 83 \norm{\vz-\vzx}
		\]
		and
		\[
		\norm{\md_1(\vz)-\md_1(\vzx)} \le \maxm{1\le j\le m} \abs{\abs{z_i}^2-\abs{\zxi}^2}\le \norm{\vz-\vzx}\cdot\xkh{\norm{\vz}+\norm{\vzx}}\le \frac 83 \norm{\vz-\vzx}
\]
as long as $\norm{\vz}\le 5\norm{\vzx}/3$. By taking $\delta_1\le \betat (1+\beta_1)^{-1}/80$ and $\delta_3\le \betat /16$ , it holds
{\small \begin{equation*}
		\norm{\nabla^2 f(\vz) - \nabla^2 F(\vzx)}\le \norm{\nabla^2 f(\vz) - \E\zkh{\nabla^2 f(\vz)}} + \norm{\E\zkh{\nabla^2 f(\vz)} - \nabla^2 F(\vzx)}\le \frac{5\betat}{8}
\end{equation*}}
provided that $m\gtrsim \betat^{-2} n \log^3 m$.

%---------------------Lemma B.2
\subsection{Proof of Lemma \ref{le:error:a}}\label{pf:error:a}

Recall from the definition of $\dist{\cdot,\cdot}$ and the notation $\phi(t)$ given in \eqref{def:dist} and \eqref{def:ztilde:a}, respectively. It is easy to see that
\begin{equation*}
	\begin{aligned}
		\dist{\vztt,\vzx} &= \min_{\phi\in \R} \norm{\e^{i \phi}\vztt-\vzx}\le \norm{\e^{i \phi(t)}\vztt-\vzx}.
	\end{aligned}
\end{equation*}
Applying the gradient update rule \eqref{def:df}, one has
\begin{equation*}
	\begin{aligned}
		\begin{bmatrix}
			\e^{i \phi(t)}\vztt-\vzx \vspace{0.4em}\\
			\overline{\e^{i \phi(t)}\vztt-\vzx}
		\end{bmatrix}=
		&\begin{bmatrix}
			\e^{i \phi(t)}\vzt \vspace{0.4em}\\
			\overline{\e^{i \phi(t)}\vzt}
		\end{bmatrix}-\eta
		\begin{bmatrix}
			\e^{i \phi(t)}\nabla f_\vz(\vzt) \vspace{0.4em}\\
			\overline{\e^{i \phi(t)}\nabla f_\vz(\vzt)}
		\end{bmatrix}-
		\begin{bmatrix}
			\vzx \vspace{0.4em}\\
			\overline{\vzx}
		\end{bmatrix}\\
		=
		&\underbrace{\begin{bmatrix}
				\vztu-\vzx \vspace{0.4em}\\
				\overline{\vztu-\vzx}
			\end{bmatrix}-\eta
			\begin{bmatrix}
				\nabla f_\vz(\vztu)-\nabla f_\vz(\vzx) \vspace{0.4em}\\
				\overline{\nabla f_\vz(\vztu)-\nabla f_\vz(\vzx)}
		\end{bmatrix}}_{:=I_1}-\underbrace{\eta
			\begin{bmatrix}
				\nabla f_\vz(\vzx) \vspace{0.4em}\\
				\overline{\nabla f_\vz(\vzx)}
		\end{bmatrix}}_{:=I_2},
	\end{aligned}
\end{equation*}
where $\tz^t = e^{i \phi(t)}\vz^t$ as in \eqref{def:ztilde:a}. For the term $I_1$, the fundamental theorem of calculus \cite[Chapter XIII, Theorem 4.2]{Calculus} states that
\begin{equation*}
	\begin{aligned}
		\begin{bmatrix}
			\nabla f_\vz(\vztu)-\nabla f_\vz(\vzx) \vspace{0.4em}\\
			\overline{\nabla f_\vz(\vztu)-\nabla f_\vz(\vzx)}
		\end{bmatrix}
		=
		\underbrace{\int_{0}^{1} \nabla^2 f(\vz(\tau))\dd \tau}_{:=\ma}
		\begin{bmatrix}
			\vztu-\vzx \vspace{0.4em}\\
			\overline{\vztu-\vzx}
		\end{bmatrix},
	\end{aligned}
\end{equation*}
where $\vz(\tau):=\vzx + \tau \xkh{\vztu-\vzx}$ and $\nabla^2 f$ is the Wirtinger Hessian matrix. Therefore,
\begin{eqnarray}
		\norm{I_1}^2 &= &  \deltaztx^\hh \xkh{I-\eta A}^\hh \xkh{I-\eta A}\deltaztx \label{eq:I143}\\
		&\le&  2(1+\eta^2 \norm{\ma}^2 )\norm{\vztu-\vzx}- 2 \eta\deltaztx^\hh \ma \deltaztx. \notag
\end{eqnarray}
For all $0\le \tau\le 1$, it follows from the assumption \eqref{eq:le32} that it holds
\begin{align*}
	\norm{\vz(\tau)-\vzx}
	\le \norm{\vztu-\vzx}
	%\le C_{3}+ C_{4} \frac{\abs{\vone^\T \vxi}}{m\betat} + C_{5} K^2\frac{\sqrt{n}\norm{\vxi} + n\norminf{\vxi}}{m\betat}
	\le \delta_1,
\end{align*}
and
\begin{align*}
	\max_{1\le j\le m}\abs{\va_j^\hh \xkh{\vz(\tau)-\vzx}}
	%\le& \max_{1\le j\le m} \abs{\va_j^\hh \xkh{\vztu-\vzx}}\\
	%\le& C_{7}K\sqrtlm \xkh{1+\frac{\abs{\vone^\T \vxi}+ K^2\sqrtn\norm{\vxi} + K^2n\norminf{\vxi}}{m\betat}}\\
	\le& \tilde{C}_{3} K\sqrtlm .
\end{align*}
%where $C_{3}>0$ is sufficiently small, $\vxi$ satisfies \eqref{cond:noise:a} and $m\gtrsim \betat^{-1}K^2 n\log^3 m$.
In addition,  note that $\norm{\vztu-\vzx} = \min\limits_{\phi\in \R} \norm{\e^{i \phi}\vz^t-\vz^*}$. Thus,  $\norm{\vz(\tau)-\vzx}= \tau \norm{\vztu-\vx}$ implies $\vz(\tau)$ is aligned with $\vzx$. We are ready to apply Lemma \ref{le:ric:a} to derive
\begin{equation*}
	\norm{\ma}\le \lambda\quad {\rm and } \quad  \deltaztx^\hh \ma \deltaztx \ge 2\nu \norm{\vztu-\vzx}^2.
\end{equation*}
Here, we set
\begin{equation}\label{simply:para:a}
	\lambda = 12+3\beta_1\quad {\rm and}\quad \nu = \frac{\betat}{4}
\end{equation}
for convenience.  Substitution into \eqref{eq:I143} gives that when $\eta\le \nu\lambda^{-2}$, it holds
\begin{equation*}
	\norm{I_1}^2 \le 2(1+\eta^2\lambda^2-2\nu \eta)\norm{\vztu-\vzx}^2 \le 2(1-\nu \eta)\norm{\vztu-\vzx}^2,
\end{equation*}
and hence
\begin{equation}\label{i1:errorA}
	\norm{I_1} \le \sqrttwo (1-\nu \eta)^{1/2} \norm{\vztu-\vzx} \le \sqrttwo (1-\frac{\nu}{2}\eta) \norm{\vztu-\vzx}.
\end{equation}

We next turn to the term $I_2$. Based on the Wirtinger gradient as in \eqref{def:df}, the Cauchy-Schwarz together with Lemma \ref{le:noise} yields
\begin{eqnarray*}
		\norm{\nabla f_{\vz} (\vzx)}& = &  \norm{ \frac 1m \sum_{i=1}^{m} \xi_i \va_i \va_i^\hh \vzx} \le   \frac{\abs{\vone^\T \vxi}+C_{9} K^2\xkh{\sqrt{n} \norm{\vxi} + n \norminf{\vxi}}}{m},
\end{eqnarray*}
and hence
\begin{equation}\label{i2:errorA}
	\norm{I_2} = \sqrttwo \eta\norm{\nabla f_{\vz} (\vzx)}\le \sqrttwo \eta\frac{\abs{\vone^\T \vxi}+C_{9} K^2\xkh{\sqrt{n} \norm{\vxi} + n \norminf{\vxi}}}{m}.
\end{equation}

Finally, combining \eqref{i1:errorA} and \eqref{i2:errorA}, one has
\begin{align}
	\dist{\vztt,\vzx} &\le \sqrtonetwo \norm{I_1}+\sqrtonetwo \norm{I_2} \notag \\
	\label{errorContraction:a}& \le (1-\frac{\nu}{2}\eta) \dist{\vzt,\vzx}+\eta\frac{\abs{\vone^\T \vxi}+C_{9} K^2\xkh{\sqrt{n} \norm{\vxi} + n \norminf{\vxi}}}{m}
\end{align}
with probability at least $1-O\xkh{m^{-20}}$. This completes the proof.

%---------------------Lemma B.3
\subsection{Proof of Lemma \ref{le:ztl:a}}\label{pf:ztl:a}
Recall the definition $\tz^t = e^{i \phi(t)}\vz^t$ as in \eqref{def:ztilde:a}.  For any $\vztl$, denote
\begin{equation} \label{eq:zhatl}
		\phih(t) = \argmin{\phi\in \R} \norm{\e^{i \phi}\vztl-\vztu},  \qquad  \vztlh   = e^{i \phih(t)}\vztl.
\end{equation}
Then we have
\begin{align*}
	\dist{\vzttl,\vzttu}  =& \min_{\phi\in \R} \norm{\e^{i \phi}\vzttl-\e^{i \phi(t+1)}\vztt}\\
	=& \min_{\phi\in \R} \norm{ \frac{\e^{i\phi(t+1)}}{ \e^{i\phi(t)}} \xkh{\e^{i\phi} \cdot \frac{\e^{i\phi(t)}}{ e^{i\phi(t+1)}}\vzttl-\e^{i\phi(t)}\vztt}}\\
	\le& \norm{\e^{i\phih(t)}\vzttl-\e^{i\phi(t)}\vztt},
\end{align*}
where the inequality follows by setting $\phi = \phih(t)+\phi(t+1)-\phi(t)$. Applying the gradient update rules \eqref{def:df} and \eqref{eq:uprulel}, we obtain
\begin{eqnarray}
	&&\begin{bmatrix}
		\e^{i\phih(t)}\vzttl-\e^{i\phi(t)}\vztt  \vspace{0.4em}\\
		\overline{\e^{i\phih(t)}\vzttl-\e^{i\phi(t)}\vztt}
	\end{bmatrix} \notag\\
	&=&\begin{bmatrix}
		\e^{i\phih(t)}\xkh{\vztl-\eta\nabla f_\vz^\lkh\xkh{\vztl}} \vspace{0.4em}\\
		\overline{\e^{i\phih(t)}\xkh{\vztl-\eta\nabla f_\vz^\lkh\xkh{\vztl}}}
	\end{bmatrix} - \begin{bmatrix}
		\e^{i\phi(t)}\xkh{\vzt-\eta\nabla f_\vz\xkh{\vzt}} \vspace{0.4em}\\
		\overline{\e^{i\phi(t)}\xkh{\vzt-\eta\nabla f_\vz\xkh{\vzt}}}
	\end{bmatrix}\notag \\
	&=&\underbrace{\begin{bmatrix}
			\vztlh-\vztu \vspace{0.4em}\\
			\overline{\vztlh-\vztu}
		\end{bmatrix}-\eta
		\begin{bmatrix}
			\nabla f_\vz(\vztlh)-\nabla f_\vz(\vztu) \vspace{0.4em}\\
			\overline{\nabla f_\vz(\vztlh)-\nabla f_\vz(\vztu)}
	\end{bmatrix}}_{:=I_1}-\underbrace{\frac{\eta}{m}
		\begin{bmatrix}
			\xkh{\abs{\va_l^\hh \vztlh}^2 - y_l}\va_l\va_l^\hh \vztlh \vspace{0.4em}\\
			\xkh{\abs{\va_l^\hh \vztlh}^2 - y_l}\overline{\va_l\va_l^\hh \vztlh}
	\end{bmatrix}}_{:=I_2}. \label{eq:I1I2}
\end{eqnarray}
We first estimate the part $I_1$. The fundamental theorem of calculus  states that
\begin{equation*}
	\begin{aligned}
		\begin{bmatrix}
			\nabla f_\vz(\vztlh)-\nabla f_\vz(\vztu)\\
			\overline{\nabla f_\vz(\vztlh)-\nabla f_\vz(\vztu)}
		\end{bmatrix}
		=
		\int_{0}^{1} \nabla^2 f(\vz(\tau))\dd \tau
		\begin{bmatrix}
			\vztlh-\vztu\\
			\overline{\vztlh-\vztu}
		\end{bmatrix},
	\end{aligned}
\end{equation*}
where $\vz(\tau):=\vztu + \tau \xkh{\vztlh-\vztu}$ and $\nabla^2 f$ is the Wirtinger Hessian matrix.
In order to apply Lemma \ref{le:ric:a}, we need to upper bound the following two terms:
\[\norm{\vz(\tau)-\vzx}\quad \mbox{and}\quad \max_{1\le j\le m}\aabs{\va_j^\hh \xkh{\vz(\tau)-\vzx}}.\]
Under the induction hypotheses \eqref{induct:err:a}-\eqref{induct:inh:a},  with probability at least $1-m\exp(-c_0 n)-O(m^{-20})$, one has
\begin{eqnarray*}
	\norm{\vz(\tau)-\vzx}&\le& \norm{\vztu-\vzx} + \maxm{1\le l\le m} \norm{\vztlh-\vztu}\\
	& \le& C_{3}+ C_{4} \frac{\abs{\vone^\T \vxi}}{m\betat} + C_{5} K^2\frac{\sqrt{n}\norm{\vxi} + n\norminf{\vxi}}{m\betat}  \\
	&&  + C_{6} \frac{K^7\sqrtnlmmm}{m\beta_2}\xkh{1+\frac{\abs{\vone^\T \vxi}+\sqrtn \norm{\vxi}}{m} + \frac{\norminf{\vxi}}{K^4 \log m}}\\
	&\le& \delta_1,
\end{eqnarray*}
and
\begin{eqnarray*}
		\max_{1\le j\le m}\abs{\va_j^\hh \xkh{\vz(\tau)-\vzx}}
		&\le& \max_{1\le j\le m} \abs{\va_j^\hh \xkh{\vztlh-\vztu}} + \max_{1\le j\le m} \abs{\va_j^\hh \xkh{\vztu-\vzx}} \\
		&\le& \max_{1\le j\le m} \norm{\va_j}\cdot \max_{1\le l\le m}\norm{\vztlh-\vztu} + \max_{1\le j\le m} \abs{\va_j^\hh \xkh{\vztu-\vzx}} \\
		&\le& c_3K^2\sqrtn  C_{6} \frac{K^7\sqrtnlmmm}{m\beta_2}\xkh{1+\frac{\abs{\vone^\T \vxi}+\sqrtn\norm{\vxi}}{m} + \frac{\norminf{\vxi}}{K^4 \log m}} \\
		&&+ C_{7}K\sqrtlm \xkh{1+\frac{\abs{\vone^\T \vxi}+ K^2\sqrtn\norm{\vxi} + K^4 n\norminf{\vxi}}{m\betat}}\\
		&\le& \tilde{C}_{3} K\sqrtlm,
\end{eqnarray*}
provided $m\gtrsim \betat^{-1} K^8 n\log^3 m$.  In particular,  we have \(\norm{\vztlh-\vzx}\le\delta_1\). This thereby gives
\[
\norm{\vztlh} \le \norm{\vztlh-\vzx}+\norm{\vzx}  \le 2,
\]
as long as $\delta_1 \le 1$.  Using the same notations  \(\lambda\),  \(\nu\) as in \eqref{simply:para:a} and applying Lemma \ref{le:ric:a}, the $I_1$ can be upper bounded with the same argument as  \eqref{i1:errorA}. Specifically,  when \(\eta\le\nu\lambda^{-2}\), with probability at least $1-m\exp (-c_0 n)-O(m^{-20})$ for some universal some constant $c_0>0$, one has
\begin{equation} \label{eq:II1}
	\norm{I_1} \le \sqrttwo (1-\frac{\nu}{2}\eta) \norm{\vztlh-\vztu}.
\end{equation}

For  the second part \(I_2\),  recall that $y_l=\aabs{\va_l^\hh\vzx}^2+\xi_l$. Then it follows from the triangle inequality and the Cauchy-Schwarz inequality that, with probability at least $1-m\exp (-c_0 n)-O(m^{-20})$, one has
\begin{eqnarray}
	&&\norm{\xkh{\aabs{\va_l^\hh \vztlh}^2 - y_l}\va_l\va_l^\hh \vztlh}  \label{eq:II2}\\
	&\le& \abs{\aabs{\va_l^\hh \vztlh}^2-\aabs{\va_l^\hh\vzx}^2}\cdot \norm{\va_l}\cdot \aabs{\va_l^\hh \vztlh} + \abs{\xi_l}\cdot \norm{\va_l}\cdot \aabs{\va_l^\hh\vztlh} \notag \\
	&\stackrel{\mbox{(i)}}{\le}& \xkh{\abs{\aabs{\va_l^\hh \vztlh}^2-\aabs{\va_l^\hh\vzx}^2}+\norminf{\vxi}} c_2 c_3 K^3\sqrtnlm \norm{\vztl} \notag\\
	&\stackrel{\mbox{(ii)}}{\le}& \xkh{3 c_2^2 K^2\log m\norm{\vztlu-\vzx} +\norminf{\vxi}}2 c_2 c_3 K^3\sqrtnlm\notag\\
	&\stackrel{\mbox{(iii)}}{\le}& \xkh{3 c_2^2 K^2\log m\xkh{\norm{\vztlu-\vztu}+\norm{\vztu-\vzx}} +\norminf{\vxi}}2 c_2 c_3 K^3\sqrtnlm \notag\\
	&\stackrel{\mbox{(iv)}}{\le}& 8c_2^3 c_3 K^5\sqrtnlmmm\xkh{C_{3}+ C_{4} \frac{\abs{\vone^\T \vxi}}{m\betat} + C_{5} K^2\frac{\sqrt{n}\norm{\vxi} + n\norminf{\vxi}}{m\betat}} \notag\\
	&&+ 2 c_2 c_3 K^3\sqrtnlm\norminf{\vxi}. \notag
\end{eqnarray}
for some universal constants $c_2, c_3>0$. Here,  (i) arises from the fact that \(\abs{\va_l^\hh\vztlh}=\abs{\va_l^\hh\vztl}\) and
\[
	\maxm{1\le l\le m} \abs{\va_l^\hh \vztl} \le c_2 K \sqrtlm\norm{\vztl}, \qquad \maxm{1\le l\le m} \norm{\va_l} \le c_3 K^2 \sqrtn
\]
due to the independence between $\va_l$ and $\vztl$ together with Lemma \ref{le:order:statistics}. And (ii) comes from
\begin{align*}
	\abs{\aabs{\va_l^\hh \vztlh}^2-\aabs{\va_l^\hh\vzx}^2} &\le \xkh{\aabs{\va_l^\hh \vztl}+\aabs{\va_l^\hh \vzx}} \abs{ \va_l^\hh\xxkh{\vztlu-\vzx}}\\
	&\le c_2^2 K^2 \log m \xkh{\norm{\vztl}+1}\norm{\vztlu-\vzx},
\end{align*}
where we use Lemma \ref{le:order:statistics} again and the fact  that $\norm{\vztl} \le 2$.
Besides, (iii) follows from the triangle inequality, and (iv) utilizes the induction hypotheses \eqref{induct:a} that
\begin{align*}
	\max_{1\le l\le m}\norm{\vztlu-\vztu}\lesssim C_{6} \frac{K^7\sqrtnlmmm}{m\beta_2}\xkh{1+\frac{\abs{\vone^\T \vxi}+\sqrtn\norm{\vxi}}{m} + \frac{\norminf{\vxi}}{\log m}}
\end{align*}
and
\[\norm{\vztu-\vzx}\le C_{3}+ C_{4} \frac{\abs{\vone^\T \vxi}}{m\betat} + C_{5} K^2\frac{\sqrt{n}\norm{\vxi} + n\norminf{\vxi}}{m\betat}\]
with $m\gtrsim \beta_2^{-1} K^8 n\log^3 m$.  Putting \eqref{eq:II1} and \eqref{eq:II2} into \eqref{eq:I1I2}, we obtain that with probability at least $1-m\exp\xkh{-c_0 n}-O(m^{-20})$, it holds
\begin{align*}
	&\max_{1\le l\le m} \dist{\vzttl,\vzttu}\\
	\le& \max_{1\le l\le m}\dkh{\sqrtonetwo\norm{I_1} + \sqrtonetwo\norm{I_2}}\\
	\le& \xkh{1-\frac{\nu\eta}{2}}\max_{1\le l\le m}\dist{\vztl,\vztu} +2 c_2 c_3 K^3\eta\frac{\sqrtnlm}{m}\norminf{\vxi}\\
	&+  8c_2^3 c_3 K^5\eta\frac{\sqrtnlmmm}{m}\xkh{C_{3}+ C_{4} \frac{\abs{\vone^\T \vxi}}{m\betat} + C_{5} K^2\frac{\sqrt{n}\norm{\vxi} + n\norminf{\vxi}}{m\betat}}\\
	\le& C_{6} \frac{K^7\sqrtnlmmm}{m\beta_2}\xkh{1+\frac{\abs{\vone^\T \vxi}+\sqrtn\norm{\vxi}}{m} + \frac{\norminf{\vxi}}{K^4 \log m}},
\end{align*}
 provided $m\gtrsim \betat^{-1}n\log^3 m$ and \(C_{6} > 0\) is a large constant. This gives the conclusion of the first part of Lemma \ref{le:ztl:a}.

Next,  we turn to upper bound $\max_{1\le l\le m}\norm{\vzttlu-\vzttu}$.  Recall the definition of $\vzttlh$ as in \eqref{eq:zhatl} and define
\[\tilde{\phi}(t+1) = \argmin{\phi\in \R} \norm{\e^{i \phi}\vzttlh-\vzx}.
\]
Set
\[
\vzttlu = \tilde{\phi}(t+1) \vzttlh.
\]
From the results of  \eqref{cond:result:1} and \eqref{eq:le331}, with high probability, it holds
\[\dist{\vztt,\vzx}\le \frac 18 \quad \mbox{and} \quad \dist{\vzttl,\vzttu}\le \frac 18.\]
This gives
 \[
 \max \dkh{\norm{\vzttlh-\vzx},\norm{\vzttu-\vzx}}\le 1/4.
 \]
Invoking Lemma \ref{le:noalign}, we have
\begin{align*}
	\norm{\vzttlu-\vzttu}\lesssim \norm{\vzttlh - \vzttu} = \dist{\vzttl,\vzttu} .
\end{align*}
Here, the last equality comes from \eqref{eq:zhatl}.
Therefore, one has
\begin{eqnarray}
	\max_{1\le l\le m}\norm{\vzttlu-\vzttu}&\lesssim&  \maxm{1\le l\le m}\dist{\vzttl,\vzttu} \notag \\
	&\le &  C_{6} \frac{K^7\sqrtnlmmm}{m\beta_2}\xkh{1+\frac{\abs{\vone^\T \vxi}+\sqrtn\norm{\vxi}}{m} + \frac{\norminf{\vxi}}{\log m}} \label{eq:ztlzti}
\end{eqnarray}
with probability exceeding $1-m\exp\xkh{-c_0 n}-O(m^{-20})$.
%---------------------Lemma B.3

\subsection{Proof of Lemma \ref{le:initial1:a}}\label{pf:initial1:a}
Recall the definition of the matrix $\mm$ given in Algorithm \ref{alg:1} is
\[
\mm = \frac{\mm_0}{\beta_2}+\frac{2-\beta_1-\beta_2}{\beta_1(2-\beta_2)}\mdiag\xkh{\mm_0}+ \frac{\beta_1-1}{\beta_1} \gamma\mi -\frac{2-2\beta_2}{\beta_2(2-\beta_2)}\real{\mm_0}
\]
with
\[
\mm_0 = \frac{1}{m}\sum_{j=1}^{m} y_j\va_j\va_j^\hh \quad \mbox{and} \quad \gamma = \frac{1}{m}\sum_{j=1}^{m} y_j.
\]
According to Lemma  \ref{le:e}, it is easy to check that
\begin{equation} \label{eq:EM0}
\E \mm = \norm{\vzx}^2\mi + \vzx\vzxh + \frac{\vone^\T \vxi}{m} \mi.
\end{equation}
Note that $\check{\vz}^0$ and $\vzx$ are the leading singular vectors of $\mm$ and $\E\mm$, respectively.
 A variant of Wedin's sin$\Theta$ theorem \cite[Theorem 2.1]{Wedin} gives
\begin{equation} \label{eq:wendinineua}
	\minm{\phi\in\R} \norm{\e^{i\phi} \check{\vz}^0-\vzx}\le \frac{\sqrttwo \norm{\mm-\E\mm}}{\sigma_1\xkh{\E\mm}- \sigma_2\xkh{\mm}}.
\end{equation}
 For any constant $\delta_0>0$, as we have proved in Lemma \ref{le:aat} and Lemma \ref{le:uniform:bound},  it holds
\begin{equation}\label{cond:y-ey}
		\norm{\mm_0 - \E \mm_0}\le\delta_0\norm{\vzx}^2 \quad \mbox{and} \quad
		\norm{\gamma - \E \gamma}\le\delta_0\norm{\vzx}^2
\end{equation}
 with probability at least $1 - m\exp (-c_0 n)-O(m^{-20})$, provided $m \ge C_0\delta_0^{-2}K^8n\log^3 m$.
This gives
\begin{align*}
	\norm{\real{\mm_0} - \E \real{\mm_0}}\le\delta_0\norm{\vzx}^2\quad \mbox{and}\quad \norm{\mdiag\xkh{\mm_0} - \E \mdiag\xkh{\mm_0}}\le\delta_0\norm{\vzx}^2.
\end{align*}
 In fact, for any complex symmetric matrix $\ma\in\C^n$, one has
\[
	\norm{\real{\ma}}=\,\norm{\frac{\ma+\bar{\ma}}{2}}
	\,\le\frac{\norm{\ma}}{2} + \frac{\norm{\bar{\ma}}}{2}
	= \norm{\ma}
	\]
	and
	\[
	\norm{\mdiag\xkh{ \ma}}= \maxm{1\le k\le m} \abs{ a_{kk}}
	= \maxm{1\le k\le m} \abs{ \ve_k^\hh \ma\ve_k}
	\le \norm{ \ma},
	\]
where $a_{kk}$ represents the k-th diagonal element of $\ma$.
Therefore,  the numerator of \eqref{eq:wendinineua} can be upper bounded as
\begin{eqnarray}
	\norm{\mm-\E\mm} & \le& \xkh{\frac{1}{\beta_2} + \frac{\abs{2-\beta_1-\beta_2}}{\beta_1(2-\beta_2)}+\frac{2-2\beta_2}{\beta_2(2-\beta_2)} + \frac{\abs{\beta_1-1}}{\beta_1}} \delta_0\norm{\vzx}^2 \label{eq:MEM}\\
	& \le& \frac{5+2\beta_1}{\betat} \delta_0\norm{\vzx}^2 \le \frac{5(1+\beta_1)}{\betat} \delta_0\norm{\vzx}^2, \notag
\end{eqnarray}
where we use the fact $\beta_1>0, 0<\beta_2\le 1, \betat=\min\dkh{\beta_1,\beta_2}$, and $\abs{2-\beta_1-\beta_2}\le 2+\beta_1, \abs{\beta_1-1}\le \beta_1+1, 0\le (2-2\beta_2)/(2-\beta_2)<1$.  Regarding the denominator of \eqref{eq:wendinineua},  due to the assumption \(\abs{\vone^\T \vxi}/m \le 1/2\), it follows from \eqref{eq:EM0} that
\begin{equation} \label{eq:EM1}
	1 \le \sigma_1\xkh{\E\mm}=2+\frac{\vone^\T \vxi}{m}<3
	\end{equation}
and
\begin{equation} \label{eq:EMn}
	\frac 12\le \sigma_k\xkh{\E\mm}=1+\frac{\vone^\T \vxi}{m}<2
\end{equation}
for all $2\le k\le m$. Invoking the Weyl's inequality, we have
\begin{eqnarray}
	\sigma_1\xkh{\E\mm}- \sigma_2\xkh{\mm}&\ge &  \sigma_1\xkh{\E\mm} - \sigma_2\xkh{\E\mm} - \norm{\mm-\E\mm} \label{eq:s1sem}\\
	&\ge&  \xkh{2+\frac{\vone^\T \vxi}{m}} - \xkh{1+\frac{\vone^\T \vxi}{m}} - \frac{5(1+\beta_1)\delta_0}{\betat} \notag \\
	&\ge&  \frac{1}{2},  \notag
\end{eqnarray}
where the second inequality arises from \eqref{eq:EM1}, \eqref{eq:EMn} and \eqref{eq:MEM}, and the last inequality follows by taking $\delta_0 \le \betat/(10(1+\beta_1))$.
Putting \eqref{eq:MEM} and \eqref{eq:s1sem} into \eqref{eq:wendinineua}, we get
\begin{equation} \label{eq:distzczx}
	\dist{\check{\vz}^0,\vzx} = \minm{\phi\in\R} \norm{\e^{i\phi} \check{\vz}^0-\vzx}\le\frac{10\sqrttwo\xkh{1+\beta_1}}{\betat}\delta_0.
\end{equation}

Next, we connect the preceding bound \eqref{eq:distzczx} with the scaled singular vector $\vz^0=\sqrt{\gamma}\check{\vz}^0$. The triangle inequality implies
\begin{align*}
	\dist{\vz^0,\vzx} =& \minm{\phi\in\R} \norm{\sqrt{\gamma}\e^{i\phi} \check{\vz}^0-\vzx} \\
	\le& \minm{\phi\in\R} \dkh{\norm{\sqrt{\gamma}\e^{i\phi} \check{\vz}^0-\sqrt{\gamma}\vzx} + \norm{\sqrt{\gamma}\vzx-\vzx}}\\
	=&   \minm{\phi\in\R} \sqrt{\gamma}\norm{\e^{i\phi} \check{\vz}^0-\vzx} + \abs{\sqrt{\gamma}-1}.
\end{align*}
Note that $\E \gamma= \norm{\vzx}^2+\frac{\vone^\T \vxi}{m}$. This together with  \eqref{cond:y-ey} reveal that
\begin{align} \label{eq:gammalup}
	\abs{\gamma - 1}  \le \abs{\frac{\vone^\T \vxi}{m}} + \delta_0 \le 2\delta_0    \quad \mbox{and}\quad \frac 12 \le \gamma\le 2
\end{align}
as long as \(\abs{\vone^\T \vxi}/m \le \delta_0\) and $\delta_0<1/4$. This further implies that
\begin{equation*}
	\abs{\sqrt{\gamma}-1} = \frac{ \abs{\gamma-1}}{\sqrt{\gamma}+1}\le 2\delta_0.
\end{equation*}
To summarize, we arrive at the conclusion that for any $0<\delta<1$, with probability at least $1 - m\exp (-c_0 n) - O(m^{-10})$, it holds
\begin{eqnarray*}
	\dist{\vz^0,\vzx} &\le& \sqrt{\gamma}\dist{\check{\vz}^0,\vzx} + \abs{\sqrt{\gamma}-1}\\
	&\le& \frac{20(1+\beta_1)}{\betat}\delta_0 + 2\delta_0\\
	&\le& \frac{22(1+\beta_1)}{\betat}\delta_0\\
	&\le & \delta,
\end{eqnarray*}
provided $m\ge C_0(1+\beta_1)^2\betat^{-2}\delta^{-2} K^8 n \log^3 m$. Here,  we use the fact that $\betat=\min\dkh{\beta_1,\beta_2}\le 1+\beta_1$ in the second inequality and take $\delta_0=\frac{\betat \delta}{22(1+\beta_1)}$ in the last inequality.
Repeating the same arguments, we see that
\begin{equation*}
	\maxm{1\le l\le m}\dist{\vz^{0,\lkh},\vzx} \le \delta.
\end{equation*}

\subsection{Proof of Lemma \ref{le:initial2:a}}\label{pf:initial2:a}
Recall that $\check{\vz}^0$ and $\check{\vz}^{0,\lkh}$ are the leading singular vectors of $\mm$ and $\mm^\lkh$, respectively.
Invoke Wedin's sin$\Theta$ theorem \cite[Theorem 2.1]{Wedin} to obtain
\begin{equation}\label{ineq:initial2:1:a}
	\dist{\check{\vz}^0,\check{\vz}^{0,\lkh}} = \minm{\phi\in\R} \norm{\e^{i\phi}\check{\vz}^0-\check{\vz}^{0,\lkh}}\le \frac{\sqrttwo \norm{\xkh{\mm-\mm^\lkh}\check{\vz}^{0,\lkh}}}{\sigma_1\xkh{\mml}-\sigma_2\xkh{\mm}}.
\end{equation}
To prove the main result, we need an upper bound for $\norm{\xkh{\mm-\mm^\lkh}\check{\vz}^{0,\lkh}}$ and a lower bound for $\sigma_1\xkh{\mml}-\sigma_2\xkh{\mm}$. From the definitions of $\mm$ and $\mml$, one has
\begin{equation}\label{express:zt-ztl}
	\begin{aligned}
		\mm-\mml =& \frac{\abs{\va_l^\hh\vzx}^2+\xi_l}{m\beta_2}\xkh{\va_l\va_l^\hh-\frac{2-2\beta_2}{2-\beta_2}\real{\va_l\va_l^\hh}}+ \frac{\beta_1-1}{\beta_1} \frac{\abs{\va_l^\hh\vzx}^2+\xi_l}{m}\mi \\
		&+\frac{2-\beta_1-\beta_2}{\beta_1(2-\beta_2)}\frac{\abs{\va_l^\hh\vzx}^2+\xi_l}{m}\mdiag\xkh{\va_l\va_l^\hh}.
	\end{aligned}
\end{equation}
It comes from Lemma \ref{le:order:statistics} that with probability at least $1-m\exp\xkh{-c_0 n}-O(m^{-50})$, one has
\[
\maxm{1\le l\le m}\aabs{\va_l^\hh\vzx}^2 \le c_2^2 K^2 \log m
\]
and
\[
	\maxm{1\le l\le m} ~ \norm{\va_l\va_l^\hh \check{\vz}^{0,\lkh}} \le \maxm{1\le l\le m}\norm{\va_l}\cdot \maxm{1\le l\le m}\abs{\va_l^\hh \check{\vz}^{0,\lkh}}  \le c_2c_3 K^3 \sqrtnlm,
\]
due to the statistical independence between $\va_l$ and $\check{\vz}^{0,\lkh}$. This implies
\begin{eqnarray*}
	\norm{\real{\va_l\va_l^\hh}\check{\vz}^{0,\lkh}} &= &  \norm{\frac{\va_l\va_l^\hh
	+\overline{\va_l\va_l^\hh}}{2}\check{\vz}^{0,\lkh}} \\
	&\le &  \frac{\norm{\va_l\va_l^\hh\check{\vz}^{0,\lkh}}+\norm{\va_l\va_l^\hh\bar{\check{\vz}}^{0,\lkh}}}{2}\\
	&\le &  c_2c_3 K^3 \sqrtnlm.
\end{eqnarray*}
Moreover, with probability at least $1-O(m^{-50}n^{-50})$, it holds
\begin{align*}
	\maxm{1\le l\le m}\norm{\mdiag\xkh{\va_l\va_l^\hh}\check{\vz}^{0,\lkh}}\stackrel{\mbox{(i)}}{\le}& \maxm{1\le l\le m}\norm{\mdiag\xkh{\va_l\va_l^\hh}} \\
	\stackrel{\mbox{(ii)}}{=}& \maxm{1\le l\le m} \xkh{\maxm{1\le j\le n}\abs{a_{lj}}^2} \\
	\stackrel{\mbox{(iii)}}{\le}& c_2^2K^2\log (mn).
\end{align*}
Here, (i) follows from Cauchy-Schwarz and $\norm{\check{\vz}^0} = 1$,  (ii) comes from the fact $\mdiag\xkh{\va_l\va_l^\hh}$ is a diagonal matrix whose $j$-th diagonal element is \(\abs{a_{lj}}^2\), and (iii) arises from Lemma\ref{le:order:statistics}.
 Combining the above estimators together, we obtain the following upper bound
\begin{eqnarray}
	& &\maxm{1\le l\le m}\norm{\xkh{\mm-\mm^\lkh}\check{\vz}^{0,\lkh}} \label{eq:MMlnum}\\
	& \le &\frac{c_2^2K^2\log m+\norminf{\vxi}}{m}\xkh{\frac{2c_2c_3K^3\sqrtnlm}{\beta_2} + \frac{\beta_1+1}{\beta_1} + \frac{(2+\beta_1)c_2^2 K^2 \log\xkh{mn}}{\beta_1}} \notag \\
	& \le &\frac{3c_2^3c_3 K^5\sqrtnlmmm}{m\beta_2} + \frac{3c_2c_3 K^3\sqrtnlm\norminf{\vxi}}{m\beta_2}  \notag
\end{eqnarray}
with probability at least $1 - m\exp(-c_0 n) - O(m^{-50})$. Here, the last inequality holds true as long as
\begin{align*}
	n\log m\gg \frac{(\beta_1+1)^2\beta_2^2}{\beta_1^2}\quad \mbox{and}\quad n\log m\gg \frac{(\beta_1+2)^2\beta_2^2\log^2 (mn)}{\beta_1^2}.
\end{align*}
We now turn to estimate the denominator $\sigma_1\xkh{\mml}-\sigma_2\xkh{\mm}$ of \eqref{ineq:initial2:1:a}. As shown in \eqref{eq:MEM} that with probability at least $1 - m\exp (-c_0 n) - O(m^{-20})$, it holds
\[\norm{\mm-\E\mm}\le \frac{1}{12},
\]
provided $m\gtrsim (1+\beta_1)^2 \betat^{-2} K^8 n\log^3 m$.  Repeating the same arguments, one has
\[\maxm{1\le l\le m} \norm{\mml-\E\mml}\le \frac{1}{12}.
\]
Thus, the triangle inequality gives
\begin{eqnarray}
		\sigma_1\xkh{\mml}-\sigma_2\xkh{\mm} & \ge&\sigma_1\xkh{\E\mml}-\norm{\mml-\E\mml}-\sigma_2\xkh{\E\mm}-\norm{\mm-\E\mm} \notag \\
		&\ge& \xkh{\frac 74+\frac{\sum_{j\ne l}\xi_j}{m}}-\norm{\mml-\E\mml}-\xkh{1+\frac{\vone^\T \vxi}{m}}-\norm{\mm-\E\mm}\notag \\
		& \ge& \frac{7}{12} - \frac{\norminf{\vxi}}{m}  \ge  \frac 12,  \label{eq:sml2m}
\end{eqnarray}
where the second line comes from
\[\sigma_1\xkh{\E\mml} \ge \frac 74 + \frac{\sum_{j\ne l}\xi_j}{m},
\]
 and the last inequality follows from the fact $\norminf{\vxi}/m\le 1/12$ due to $\norminf{\vxi}\lesssim \log m$.  Putting \eqref{eq:MMlnum} and \eqref{eq:sml2m} into \eqref{ineq:initial2:1:a},  we derive
\begin{equation} \label{eq:zhzol}
	\maxm{1\le l\le m}\dist{\check{\vz}^0,\check{\vz}^{0,\lkh}}\le \frac{6\sqrttwo c_2^3c_3 K^5\sqrtnlmmm}{m\beta_2} + \frac{6\sqrttwo c_2c_3 K^3\sqrtnlm\norminf{\vxi}}{m\beta_2}.
\end{equation}

Next, we translate the preceding bounds to the scaled version, namely, $\maxm{1\le l\le m} \dist{\vz^{0,\lkh},\tilde{\vz}^0}$. Recalling \eqref{eq:gammalup}, one sees that with high probability, it holds
\begin{align*}
	\frac 12\le \gamma \le 2\quad \mbox{and}\quad \frac 12\le \gl \le 2
\end{align*}
for each $1\le l\le m$. Here, $\gamma = \frac{1}{m}\sum_{j=1}^{m} y_j$ and $\gl = \frac{1}{m}\sum_{j=1,j \ne l}^{m} y_j$. From the definitions of $\vz^{0,\lkh}$ and $\tilde{\vz}^0$,  we have
\begin{align*}
	\dist{\vz^{0,\lkh},\tilde{\vz}^0} =& \minm{\phi\in\R} \norm{\sqrt{\gl}\e^{i\phi} \check{\vz}^{0,\lkh}-\tilde{\vz}^0} \\
	\le& \minm{\phi\in\R} \left\{\norm{\sqrt{\gl}\e^{i\phi}\check{\vz}^{0,\lkh}-\sqrt{\gl}\e^{i\phi_0}\check{\vz}^0} + \norm{\sqrt{\gl}\e^{i\phi_0}\check{\vz}^0-\sqrt{\gamma}\e^{i\phi_0}\check{\vz}^0}\right\}\\
	=& \minm{\phi\in\R} \sqrt{\gl}\norm{\e^{i\phi}\check{\vz}^{0,\lkh}-\e^{i\phi_0}\check{\vz}^0} + \abs{\sqrt{\gamma}-\sqrt{\gl}}\\
	=& \sqrt{\gl}\;\dist{\check{\vz}^0,\check{\vz}^{0,\lkh}} + \abs{\sqrt{\gamma}-\sqrt{\gl}},
\end{align*}
where $\phi_0 = \argmin{\phi\in\R} \norm{\e^{i\phi}\vz^0-\vzx}$. Observe that with probability at least $1-O(m^{-50})$, it holds
\begin{align*}
	\maxm{1\le l\le m} \abs{\sqrt{\gamma}-\sqrt{\gl}}\le& \maxm{1\le l\le m} \frac{\abs{\abs{\va_l^\hh\vzx}^2+\xi_l}/m}{\sqrt{\gamma} + \sqrt{\gl}}\\
	\le& \maxm{1\le l\le m} \frac{\abs{\va_l^\hh\vzx}^2}{m} + \frac{\norminf{\vxi}}{m}
	\le \frac{c_2^2 K^2\log m}{m} + \frac{\norminf{\vxi}}{m}.
\end{align*}
This immediately gives that
\begin{align*}
	&\maxm{1\le l\le m} \dist{\vz^{0,\lkh},\tilde{\vz}^0}\\
	\le& \sqrttwo\maxm{1\le l\le m} \dist{\check{\vz}^0,\check{\vz}^{0,\lkh}} + \frac{c_2^2 K^2\log m}{m} + \frac{\norminf{\vxi}}{m}\\
	\le& \frac{12 c_2^3 c_3 K^5\sqrtnlmmm + c_2^2 K^2 \log m}{m\beta_2} + \frac{12 c_2 c_3 K^3\sqrtnlm + 1}{m\beta_2}\norminf{\vxi}\\
	\le& C_{6} \frac{K^7\sqrtnlmmm + K^7\sqrtnlm\norminf{\vxi}}{m\beta_2}
\end{align*}
holds with probability exceeding $1 - m\exp (-c_0 n) - O(m^{-10})$ as long as $K\ge 1$ and $C_{6}\ge \max\dkh{12 c_2^3 c_3 + c_2^2, 12 c_2 c_3 + 1}$. Here, the second inequality arises from \eqref{eq:zhzol}.

Finally, using the same arguments as in deriving \eqref{eq:ztlzti}, one can apply Lemma \ref{le:noalign} to obtain
\begin{align*}
		\max_{1\le l\le m}\norm{\tilde{\vz}^{0,\lkh}-\tilde{\vz}^0}\lesssim \maxm{1\le l\le m}\dist{\vz^{0,\lkh},\tilde{\vz}^0}
		\le C_{6} \frac{K^7\sqrtnlmmm + K^7\sqrtnlm\norminf{\vxi}}{m\beta_2}.
\end{align*}

\section{Appendix C: Proof with Assumption B}
%\subsection{Proof with Assumption B}
Apply Lemma \ref{le:e} with $\beta_1 = 0$, one has
\begin{align*}
	\E \nabla f_{\vz}(\vz) =& \xkh{2\norm{\vz}^2-\norm{\vzx}^2}\vz-\vzxh \vz \vzx+\xkh{1-\beta_2}\xkh{\bar{\vz} \vz^\T-\vzxc\vzxt}\vz\\
	&-\xkh{2-\beta_2}\xkh{\md_1(\vz)-\md_1(\vzx)}\vz-\frac{\vone^\T \vxi}{m}\vz,    \\
	\E \nabla^2 f(\vz) =& \begin{bmatrix}
		\ma & \mb \\ \mb^\hh & \bar{\ma}
	\end{bmatrix},
\end{align*}
where
\begin{align*}
	\ma =& \xkh{2\norm{\vz}^2-\norm{\vzx}^2}\mi + \vz\vz^\hh - \vzx\vzxh+\xkh{1-\beta_2}\xkh{2\bar{\vz}\vz^\T-\vzxc\vzxt}  \\
	& - \xkh{2-\beta_2}\xkh{2\md_1(\vz)-\md_1(\vzx)}-\frac{\vone^\T \vxi}{m}\mi,   \\
	\mb =& 2\vz\vz^\T + (1-\beta_2)\xkh{\vz^\T \vz}\mi - \xkh{2-\beta_2}\md_2(\vz).
\end{align*}

%---------------------Lemma A.1
\subsection{Proof of Lemma \ref{le:ric:b}}\label{pf:ric:b}
We decompose \( \nabla^2 f(\vz) \) into the following two components,
\begin{equation*}
	\nabla^2 f(\vz) = \nabla^2 F(\vzx) + \xkh{\nabla^2 f(\vz) - \nabla^2 F(\vzx)},
\end{equation*}
and subsequently validate our conclusion using the following two lemmas.

\begin{lemma}\label{le:subric1:b}
	Assume that all conditions stated in Lemma \ref{le:ric:b} are satisfied.  It holds
	\begin{equation*}
		\norm{\nabla^2 f(\vzx)}\le 8\quad and \quad \vu^\hh \nabla^2 f(\vzx) \vu\ge \frac{7\beta_2}{10}\norm{\vu}^2.
	\end{equation*}
\end{lemma}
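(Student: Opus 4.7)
The plan is to mirror the structure of the proof of Lemma \ref{le:subric1:a} given in Section \ref{pf:subric1:a}, invoking Lemma \ref{le:e} with the specialization $\beta_1 = 0$ (which is exactly Assumption B), and using the non-peakiness hypothesis $\norminf{\vzx}^2 \le \mu \le 1/10$ to compensate for the loss of the favorable $+\beta_1$ contributions. Concretely, I first write out $\nabla^2 F(\vzx)$ explicitly via the formulas in the preamble of Appendix~C (with $\beta_1=0$) and decompose it as in equation \eqref{eq:nabf0} into seven block matrices.

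For the spectral norm bound, I would estimate each block exactly as in the proof of Lemma \ref{le:subric1:a}: the identity block contributes $\norm{\vzx}^2 = 1$; the rank-one block $\begin{bmatrix}\vzx \\ \vzxc\end{bmatrix}[\vzxh \; \vzxt]$ contributes at most $2$; the anti-diagonal block contributes at most $1$; and the two $(1-\beta_2)$-scaled blocks each contribute at most $1-\beta_2$. The key improvement over Assumption A is the diagonal block: since $\norm{\md_k(\vzx)} \le \norminf{\vzx}^2 \le \mu \le 1/10$, the $(2-\beta_2)$-scaled block has norm at most $2(2-\beta_2)\mu \le 2/5$. The noise term contributes at most $\tilde{C}_{11}$. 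Summing gives a bound comfortably below $8$.

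For the restricted strong convexity, I expand $\vu^\hh \nabla^2 F(\vzx) \vu = 2J_1 + J_2 + \bar{J_2} + J_3$ as in \eqref{eq:un2u}, where $\vw=\vz_1-\vz_2$. I carry over the two algebraic identities \eqref{eq:zw21}, \eqref{eq:zw22} and the representation \eqref{eq:J2wz} of $J_2$, all of which are formal identities independent of $\beta_1$. Setting $\beta_1 = 0$ specializes the lower bound \eqref{j1:b} to
\[
J_1 \ge (1-\beta_2)\sum_{i>j}\abs{\zxi\wjc+\zxj\wic}^2 + \beta_2\norm{\vzx}^2\norm{\vw}^2 - \beta_2\sumin\abs{\zxi}^2\abs{\wi}^2,
\]
and the $J_2+\bar J_2$ bound in \eqref{j2} to
\[
J_2+\bar{J_2} \ge 2(1-\beta_2)\sum_{i>j}\real{(\zxi\wjc+\zxj\wic)^2} - 2\beta_2\sumin\abs{\zxi}^2\abs{\wi}^2 - 10\norm{\vzx-\vz_1}\norm{\vw}^2.
\]
The crucial observation, replacing the two-case analysis of the Assumption A proof, is that non-peakiness immediately yields
\[
\sumin\abs{\zxi}^2\abs{\wi}^2 \le \norminf{\vzx}^2\cdot\norm{\vw}^2 \le \mu\norm{\vw}^2 \le \frac{1}{10}\norm{\vw}^2.
\]
Combining, and using $\abs{\alpha}^2+\real{\alpha^2}\ge 0$ to discard the mixed $(1-\beta_2)$ contributions as nonnegative, I obtain
\[
\vu^\hh \nabla^2 F(\vzx)\vu \ge 2\beta_2\norm{\vw}^2 - 4\beta_2\mu\norm{\vw}^2 - 10\delta_{11}\norm{\vw}^2 - 2\tilde{C}_{11}\norm{\vw}^2,
\]
which is at least $\tfrac{7\beta_2}{10}\norm{\vu}^2 = \tfrac{7\beta_2}{5}\norm{\vw}^2$ once $\delta_{11}$ and $\tilde{C}_{11}$ are chosen sufficiently small relative to $\beta_2$.

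The main obstacle is ensuring that the lower bound survives the loss of the $\beta_1$-cushion: without non-peakiness one could have $\sum\abs{\zxi}^2\abs{\wi}^2$ as large as $\norm{\vw}^2$, in which case the term $-\beta_2\sum\abs{\zxi}^2\abs{\wi}^2$ could cancel the main $+\beta_2\norm{\vzx}^2\norm{\vw}^2$ entirely. The choice $\mu \le 1/10$ is precisely what guarantees the residual $2\beta_2(1-2\mu)\norm{\vw}^2 \ge \tfrac{8}{5}\beta_2\norm{\vw}^2$, leaving enough margin to absorb the $\delta_{11}$ and $\tilde{C}_{11}$ perturbations. All remaining bookkeeping—the bound \eqref{ineq:tau1} on $\tau_1$ from Lemma \ref{le:align}, and the identity used to collapse $2(1-\beta_2)\real{(\cdot)^2}+2(1-\beta_2)|\cdot|^2$ into $4(1-\beta_2)(\real{\cdot})^2\ge 0$—transfers verbatim from the Assumption A proof.
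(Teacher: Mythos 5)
Your proposal is correct and follows essentially the same route as the paper: specialize the Assumption A machinery to $\beta_1=0$, and replace the two-case analysis by the single observation that non-peakiness bounds $\sum_i\abs{z_i^*}^2\abs{w_i}^2 \le \mu\norm{\vw}^2 \le \norm{\vw}^2/10$. The only cosmetic differences are that the paper establishes the diagonal-block operator-norm bound by noting the block matrix is PSD with eigenvalues $\{0,2\abs{z_i^*}^2\}$ rather than by triangle inequality, and it applies the identity $\abs{\alpha}^2+\real{\alpha^2}=2(\real{\alpha})^2$ to the $\beta_2$-terms before invoking $(\real{\zxi\wic})^2\le\abs{\zxi}^2\abs{\wi}^2$, whereas you bound $\real{\zxis\wics}\le\abs{\zxi}^2\abs{\wi}^2$ first; both yield the same $-4\beta_2\mu\norm{\vw}^2$ term and the same final constant $\tfrac{7\beta_2}{10}$.
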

%The subsequent phase substantiates that \(\norm{\nabla^2 f(\vz)-\nabla^2 F(\vzx)}\) can be rendered sufficiently small.

\begin{lemma}\label{le:subric2:b}
	Suppose that  $m\ge C_{10} \beta_2^{-2} K^8 n \log^3 m$ for some universal constant $ C_{10} > 0 $. Then with probability at least $1-O(m^{-10})-m\exp (-c_0 n)$, it holds
	\begin{equation*}
		\norm{\nabla^2 f(\vz)-\nabla^2 F(\vzx)} \le \frac{9\beta_2}{20}
	\end{equation*}
	for all $\vz$ satisfies \eqref{cond1:ric:b} and \eqref{cond2:ric:b}, where $c_0>0$ is some universal constant.
\end{lemma}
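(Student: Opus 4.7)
The plan is to follow the same two-term decomposition used for Lemma \ref{le:subric2:a}, namely
\[
\norm{\nabla^2 f(\vz) - \nabla^2 F(\vzx)} \le \norm{\nabla^2 f(\vz) - \E \nabla^2 f(\vz)} + \norm{\E \nabla^2 f(\vz) - \nabla^2 F(\vzx)},
\]
and bound each term by $9\beta_2/40$. The only structural difference from the Assumption A case is that under Assumption B we have $\beta_1 = 0$, so the expectation formulas from Lemma \ref{le:e} specialize accordingly (the coefficient $2-\beta_1-\beta_2$ becomes $2-\beta_2$), and the target accuracy is $9\beta_2/20$ instead of $5\betat/8$.

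For the concentration term, I first need to verify the incoherence bound $\max_{1\le j\le m} |\va_j^\hh \vz| \le c_2 K \sqrt{\log m}\,\norm{\vz}$ that Lemma \ref{le:uniform:bound} requires. This follows from the triangle inequality: Lemma \ref{le:order:statistics} applied to $\vzx$ (which is independent of the $\va_j$) gives $\max_j |\va_j^\hh \vzx| \le (c_2/2) K\sqrt{\log m}$ with probability $1-O(m^{-50})$, and the assumption \eqref{cond2:ric:b} controls $\max_j |\va_j^\hh(\vz-\vzx)| \le \tilde C_{13} K \sqrt{\log m}$, so taking $c_2 \ge 2\tilde C_{13}$ yields the required bound. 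Then Lemma \ref{le:uniform:bound} gives $\norm{\nabla^2 f(\vz) - \E \nabla^2 f(\vz)} \le \delta_3 \norm{\vzx}^2 = \delta_3$ with probability at least $1-m\exp(-c_0 n) - O(m^{-20})$, provided $m \ge C_0 \delta_3^{-2} K^8 n \log^3 m$. I will pick $\delta_3 \le 9\beta_2/40$, which is the source of the $\beta_2^{-2}$ factor in the sample complexity.

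For the expectation-difference term, I will use the formula from Lemma \ref{le:e} (with $\beta_1 = 0$) applied at both $\vz$ and $\vzx$. The difference is a linear combination of blocks involving $\norm{\vz}^2 - \norm{\vzx}^2$, $\vz\vz^\hh - \vzx\vzxh$, $\bar\vz\vz^\T - \vzxc\vzxt$, $\vz\vz^\T - \vzx\vzxt$, $\vz^\T\vz - \vzxt\vzx$, $\md_1(\vz)-\md_1(\vzx)$ and $\md_2(\vz)-\md_2(\vzx)$, with coefficients that are all $O(1)$ uniformly in $\beta_2 \in (0,1]$ (the largest multiplier is $2-\beta_2 \le 2$). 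Each of these blocks has spectral norm at most a constant times $\norm{\vz-\vzx}$ because $\norm{\vz} \le \norm{\vzx} + \delta_{11} \le 2$. Thus, as in \eqref{cond:diff of E}, there is an absolute constant $C$ with
\[
\norm{\E \nabla^2 f(\vz) - \nabla^2 F(\vzx)} \le C\,\norm{\vz-\vzx} \le C\,\delta_{11}.
\]
Choosing $\delta_{11} \le 9\beta_2/(40C)$ makes this term bounded by $9\beta_2/40$.

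Combining the two bounds gives the desired $9\beta_2/20$ with probability at least $1-m\exp(-c_0 n) - O(m^{-10})$, provided $m \ge C_{10} \beta_2^{-2} K^8 n \log^3 m$ with $C_{10}$ absorbing $C_0$ and the $\delta_3^{-2}$ factor. I do not anticipate a major obstacle here: the argument is essentially a transcription of the proof of Lemma \ref{le:subric2:a} with the substitution $\beta_1 = 0$ and the bookkeeping constants adjusted so that the dependence on $\beta_2$ matches the stated sample-complexity exponent $\beta_2^{-2}$; the only subtle point is confirming that the new coefficients in $\E \nabla^2 f$ (all bounded by an absolute constant because $\beta_2 \in (0,1]$) do not introduce a hidden $\beta_2^{-1}$ factor that would worsen the required smallness of $\delta_{11}$.
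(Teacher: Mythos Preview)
Your proposal is correct and is exactly the approach the paper takes: the paper itself simply states that the proof of Lemma \ref{le:subric2:b} ``resembles that of Lemma \ref{le:subric2:a}, and we omit it here,'' so your transcription of that argument with $\beta_1=0$ and the adjusted constants $\delta_3 \asymp \beta_2$, $\delta_{11} \asymp \beta_2$ is precisely what is intended. Your observation that the coefficients in $\E\nabla^2 f(\vz)$ remain $O(1)$ under Assumption B (so no hidden $\beta_2^{-1}$ appears in the expectation-difference term) is the only point one needs to check beyond the Assumption A proof, and you have identified it correctly.
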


Through the aforementioned two lemmas, one has
\begin{align*}
	\norm{\nabla^2 f(\vz)} \le \norm{\nabla^2 F(\vzx)} + \norm{\nabla^2 f(\vz) - \nabla^2 F(\vzx)} \le 9,
\end{align*}
and
\begin{align*}
	\vu^\hh \nabla^2 f(\vz) \vu \ge& \ \vu^\hh\nabla^2 F(\vzx)\vu + \vu^\hh\xkh{\nabla^2 f(\vz) - \nabla^2 F(\vzx)}\vu\\
	\ge& \ \vu^\hh\nabla^2 F(\vzx)\vu - \norm{\vu}^2 \cdot \norm{\nabla^2 f(\vz) - \nabla^2 F(\vzx)}\\
	\ge& \ \frac{\beta_2}{4}\norm{\vzx}^2\cdot\norm{\vu}^2.
\end{align*}

\subsubsection{Proof of Lemma \ref{le:subric1:b}}
Use Lemma \ref{le:e} to obtain that
\begin{equation}
	\nabla^2 F(\vzx)=
	\begin{bmatrix}
		\ma_2 & \mb_2 \\
		\overline{\mb_2} & \overline{\ma_2}
	\end{bmatrix}
\end{equation}
where
\begin{eqnarray*}
	&&\ma_2 = \norm{\vzx}^2 \mi+\vzx\vzxh+(1-\beta_2)\overline{\vzx}\vzxt-(2-\beta_2) \md_1(\vzx),\\
	&&\mb_2 = 2\vzx\vzxt + (1-\beta_2) (\vzxt\vzx)\mi-(2-\beta_2) \md_2(\vzx).
\end{eqnarray*}
The i-th component of $\vzx$ is denoted as $z_i^*$ for $1\le i\le n$. First, \( \nabla^2 F(\vzx) \) can be decomposed as follows,
\begin{align*}
	\nabla^2 F(\vzx) =& \norm{\vzx}^2\begin{bmatrix}
		\mi\ \ & \mzero \\
		\mzero\ \ & \mi
	\end{bmatrix}
	+ \begin{bmatrix}
		\vzx\vzxh\  & \vzx\vzxt \\
		\vzxc\vzxh\  & \vzxc\vzxt
	\end{bmatrix}
	+\begin{bmatrix}
		\mzero & \vzx\vzxt \\
		\vzxc\vzxh & \mzero
	\end{bmatrix}\\
	&+(1-\beta_2)\begin{bmatrix}
		\mzero & \xkh{\vzxt\vzx}\mi \\
		\overline{\xkh{\vzxt\vzx}}\mi & \mzero
	\end{bmatrix}
	+(1-\beta_2)\begin{bmatrix}
		\vzxc\vzxt & \mzero \\
		\mzero & \vzx\vzxh
	\end{bmatrix}\\
	&-(2-\beta_2)\begin{bmatrix}
		\md_1(\vzx)\  & \md_2(\vzx) \\
		\overline{\md_2(\vzx)}\  & \md_1(\vzx)
	\end{bmatrix}-(\frac{\vone^\T \vxi}{m})\begin{bmatrix}
		\mi\  & \mzero \\
		\mzero\  & \mi
	\end{bmatrix}.
\end{align*}
As we have demonstrated in Section  \ref{pf:subric1:a}, it holds
{\small \begin{align*}
		&\norm{\begin{bmatrix}
				\mzero & \vzx\vzxt \\
				\vzxc\vzxh & \mzero
		\end{bmatrix}} \le 1,\
		\norm{(1-\beta_2)\begin{bmatrix}
				\mzero & \xkh{\vzxt\vzx}\mi \\
				\overline{\xkh{\vzxt\vzx}}\mi & \mzero
		\end{bmatrix}}\le 1,\\
		&\norm{(1-\beta_2)\begin{bmatrix}
				\vzxc\vzxt & \mzero \\
				\mzero & \vzx\vzxh
		\end{bmatrix}}\le 1,\
		\norm{\begin{bmatrix}
				\mzero & \vzx\vzxt \\
				\vzxc\vzxh & \mzero
		\end{bmatrix}} \le 2,\
		\norm{(\frac{\vone^\T \vxi}{m})\begin{bmatrix}
				\mi\  & \mzero \\
				\mzero\  & \mi
		\end{bmatrix}}\le 2,
\end{align*}}
as long as $\abs{\vone^T\vxi}\le \beta_2 m\le 2 m$.
In addition,  under the assumption B, one can check that the matrix
\[\begin{bmatrix}
	\md_1(\vzx)\  & \md_2(\vzx) \\
	\overline{\md_2(\vzx)}\  & \md_1(\vzx)
\end{bmatrix}\] is positive semi-definite. In fact, for any \( \vx=(x_1,x_2,\cdots,x_n)^\T,\vy=(y_1,y_2,\cdots,y_n)^\T\in\C^n \), we have
\begin{align*}
	\begin{bmatrix}
		\vx^\hh\ \vy^\hh
	\end{bmatrix}
	\begin{bmatrix}
		\md_1(\vzx) & \md_2(\vzx) \\
		\overline{\md_2(\vzx)} & \md_1(\vzx)
	\end{bmatrix}
	\begin{bmatrix}
		\vx\\ \vy
	\end{bmatrix} = \sum_{i=1}^{n} \xkh{\abs{x_i}^2\abs{z_i^*}^2+\abs{y_i}^2\abs{z_i^*}^2 + 2\real{\bar{x}_i y_i z_i^{*2}}}\ge 0,
\end{align*}
due to the fact that $\abs{x_i}^2+\abs{y_i}^2\ge 2\abs{x_i}\abs{y_i}$ and $\real{\bar{x}_i y_i z_i^{*2}}\ge -\abs{x_i}\abs{y_i}\abs{z_i^*}^2$.
Therefore,
 \[\norm{\nabla^2 F(\vzx)}\le 8.\]

Next, we examine the restricted strong convexity for $\nabla^2 F(\vzx)$. For convenience, we denote $\vw = \vz_1-\vz_2\in\C^n$ and
the i-th component of $\vw$ as $w_i$. Then
\begin{equation*}
	\vu = \begin{bmatrix}
		\vw\\\overline{\vw}
	\end{bmatrix}.
\end{equation*}
Recall the claims \eqref{j1:a}, \eqref{j1:b} and \eqref{j2} and note that $\beta_1 = 0$ in Assumption B.  One has
 \begin{align*}
		2J_1 \ge& 2(1-\beta_2)\sum_{i>j} \abs{\zxi\wjc+\zxj\wic}^2 + 2\beta_2\norm{\vzx}^2 \norm{\vw}^2-2\beta_2\sumin \abs{\zxi}^2\abs{\wi}^2\\
		J_2 + \bar{J_2} \ge & 2(1-\beta_2)\sum_{i>j} \real{\xkh{\zxi\wjc+\zxj\wic}^2} - 2\beta_2\sumin \real{\zxis\wics} - 10 \norm{\vzx-\vz_1}\norm{\vw}^2,
\end{align*}
and therefore,
\begin{align*}
	\vu^\hh\nabla^2 F(\vzx)\vu =&\, 2J_1 + J_2 + \bar{J_2} - 2\frac{\vone^\T \vxi}{m} \norm{\vw}^2 \\
	\stackrel{\mbox{(i)}}{\ge}&4(1-\beta_2)\sum_{i>j} \xkh{\real{\zxi\wjc + \zxj\wic}}^2 - 4\beta_2\sumin \xkh{\real{\zxi\wic}}^2\\
	& + 2\beta_2 \norm{\vzx}^2 \norm{\vw}^2 - 10 \norm{\vzx-\vz_1}\norm{\vw}^2 - 2 \frac{\vone^\T \vxi}{m} \norm{\vw}^2\\
	\stackrel{\mbox{(ii)}}{\ge}& \frac 85 \beta_2 \norm{\vw}^2 - 10 \norm{\vzx-\vz_1} \norm{\vw}^2 - 2 \frac{\vone^\T \vxi}{m} \norm{\vw}^2\\
	\stackrel{\mbox{(iii)}}{\ge}& \frac 75 \beta_2 \norm{\vw}^2.
\end{align*}
Here, the first inequality (i) uses the identity $\abs{\alpha}^2+\real{\alpha^2}=2\xkh{\real{\alpha}}^2$ for any $\alpha\in\C$ again. The second inequality (ii) arises from
\[(1-\beta_2)\sum_{i>j} \xkh{\real{\zxi\wjc + \zxj\wic}}^2 \ge 0\]
and
\begin{equation*}
	\sumin \xkh{\real{\zxic\wi}}^2\le \sumin \abs{\zxi}^2\abs{\wi}^2\le \norminf{\vzx}^2\sumin \abs{\wi}^2\le \mu \norm{\vzx}^2\norm{\vw}^2\le \frac{1}{10}\norm{\vw}^2
\end{equation*}
with $\mu\le 1/10$.
And the last inequality (iii) utilizes the facts that
\begin{align*}
	\norm{\vzx-\vz_1}\le \delta_1\le \frac{\beta_2}{100}\quad \mbox{and}\quad \frac{\abs{\vone^\T\vxi}}{m}\le \frac{\beta_2}{20}
\end{align*}
as long as $\abs{\vone^\T \vxi}\lesssim \beta_2 m$ and $m\gtrsim \beta_2^{-1} n \log^3 m$.
As a result, it holds
\[\vu^\hh \nabla^2 F (\vzx) \vu\ge \frac{7\beta_2}{10} \norm{\vu}^2\]
due to $\norm{\vu}^2 = 2\norm{\vw}^2$.

\subsubsection{Proof of Lemma \ref{le:subric2:b}}
 The proof resembles that of Lemma \ref{le:subric2:a}, and we omit it here.

\section{Proof of Lemma \ref{le:pre:uniform:bound} }\label{pf:pre:uniform:bound}
We first establish \eqref{eq:nabal11}. For any $\vz \in \C^n$ obeying \(\max_{1\le j\le m} |\va_j^\hh \vz| \le c_2 K \sqrt{\log m}\norm{\vz}\), it holds
\[
\norm{\frac1m \sum_{j=1}^m \aabs{\va_j^\hh \vz}^2\va_j\va_j^\hh - \E \zkh{\aabs{\va^\hh \vz}^2\va\va^\hh }}  = \norm{\frac1m \sum_{j=1}^m \aabs{\va_j^\hh \vz}^2\va_j\va_j^\hh\1_{\dkh{|\va_j^\hh \vz| \le c_2 K \sqrt{\log m}}}  - \E \zkh{\aabs{\va^\hh \vz}^2\va\va^\hh }}.
\]
In what follows, we shall prove
\begin{equation} \label{eq:aj2tru}
\norm{\frac1m \sum_{j=1}^m \aabs{\va_j^\hh \vz}^2\va_j\va_j^\hh\1_{\dkh{|\va_j^\hh \vz| \le c_2 K \sqrt{\log m}}}  - \E \zkh{\aabs{\va^\hh \vz}^2\va\va^\hh }}  \le \delta \norm{\vz}^2
\end{equation}
holds simutaneously  for all $\vz \in \C^n$ by the standard covering argument. To this end,  without loss of generality, we assume that  $\norm{\vz}=1$.  Then  the triangle inequality to gives
	\begin{eqnarray*}
		&&\norm{\frac1m \sum_{j=1}^m \aabs{\va_j^\hh \vz}^2\va_j\va_j^\hh\1_{\dkh{|\va_j^\hh \vz| \le c_2 K \sqrt{\log m}}}  - \E \zkh{\aabs{\va^\hh \vz}^2\va\va^\hh }}  \\
		&\le& \underbrace{\norm{\frac1m \sum_{j=1}^m \abs{\va_j^\hh \vz}^2 \va_j\va_j^\hh \1_{\dkh{|\va_j^\hh \vz| \le c_2 K \sqrt{\log m}}} - \E\zkh{ \abs{\va_1^\hh \vz}^2 \va_1\va_1^\hh \1_{\dkh{|\va_1^\hh \vz| \le c_2 K \sqrt{\log m}}}}}}_{:=\theta_1} \\
		&& + \underbrace{\norm{\E\zkh{ \abs{\va_1^\hh \vz}^2 \va_1\va_1^\hh \1_{\dkh{|\va_1^\hh \vz| \le c_2 K \sqrt{\log m}}}} - \E \zkh{\aabs{\va^\hh \vz}^2\va\va^\hh } }}_{:=\theta_2}.
	\end{eqnarray*}	
For the second term $\theta_2$,  recall that $\va $ is an independent copy of $\va_1$. Thus,
	\begin{eqnarray*}
		\theta_2 &=& \norm{ \E\zkh{ \aabs{\va^\hh \vz}^2 \va\va^\hh \1_{\dkh{|\va^\hh \vz| > c_2 K \sqrt{\log m}}}}} \\
		&= & \max_{\vy \in \mathcal{S}_{\C}^{n-1}} \abs{ \E\zkh{ \aabs{\va^\hh \vz}^2 \aabs{\va^\hh \vy}^2 \1_{\dkh{|\va_1^\hh \vz| > c_2 K \sqrt{\log m}}}} } \\
		&\le & \max_{\vy \in \mathcal{S}_{\C}^{n-1}}  \xkh{\E\abs{\va^\hh \vz}^8 }^{1/4}  \xkh{\E\abs{\va^\hh \vy}^8 }^{1/4} \xkh{\E\zkh{\1_{\dkh{|\va^\hh \vz| > c_2 K \sqrt{\log m}}}} }^{1/2} \\
		& \lesssim &\ K^4 m^{-5}
	\end{eqnarray*}
	where the third line follows from the Cauchy-Schwartz inequality and the last inequality comes from \eqref{eq:subEp} that  $\xkh{\E\abs{\va^\hh \vz}^8}^{\frac 18}\lesssim K$ and $\xkh{\E\abs{\va^\hh \vy}^8}^{\frac 18}\lesssim K$ and the fact
	\begin{equation}\label{eq:esti of expe}
	\E\zkh{\1_{\dkh{|\va^\hh \vz| > c_2 K \sqrt{\log m}}}}=\PP\xkh{ \aabs{\va^\hh \vz} > c_2 K \sqrtlm }   \lesssim m^{-50}.
	\end{equation}
	Here, the last inequality arises from Lemma \ref{le:order:statistics}.
%	
%	\eqref{eq:esti of expe} holds due to Lemma \ref{le:order:statistics} that
%	\[\E\zkh{\1_{\dkh{|\va_1^\hh \vz| \le c_2 K \sqrt{\log m}}}} \ge \PP\xkh{\maxm{1\le j\le m} \abs{\va_j^\hh \vz} \le c_2 K \sqrtlm }\ge 1 - O\xkh{m^{-50}}\]
%	for any fixed $\vz\in\mathcal{S}_{\C}^{n-1}$.
	
	Regarding $\theta_1$, we first fix some $\vz\in \mathcal{S}_{\C}^{n-1}$. Let $\mathcal{N}_{\frac14}$ be a $1/4$-net of the unit sphere $\mathcal{S}_{\C}^{n-1}$, which has cardinality at most $81^n$. Then \cite[Exercise 4.4.3]{Vershynin2018} implies
	{\footnotesize \begin{align*}
			& \norm{\frac1m \sum_{j=1}^m \abs{\va_j^\hh \vz}^2 \va_j\va_j^\hh \1_{\dkh{|\va_j^\hh \vz| \le c_2 K \sqrt{\log m}}} - \E\zkh{ \abs{\va_1^\hh \vz}^2 \va_1\va_1^\hh \1_{\dkh{|\va_1^\hh \vz| \le c_2 K \sqrt{\log m}}}}} \\
			\le & 2 \sup_{\vy \in \mathcal{N}_{\frac14}} \abs{\frac1m \sum_{j=1}^m \abs{\va_j^\hh \vz}^2 \abs{\va_j^\hh\vy}^2 \1_{\dkh{|\va_j^\hh \vz| \le c_2 K \sqrt{\log m}}} - \E\zkh{ \abs{\va_1^\hh \vz}^2 \abs{\va_1^\hh\vy}^2 \1_{\dkh{|\va_1^\hh \vz| \le c_2 K \sqrt{\log m}}}}}.
	\end{align*}}
	For any fixed $\vy \in \mathcal{N}_{\frac14}$, note that
	\[
	\norms{ \abs{\va_j^\hh \vz}^2\abs{\va_j^\hh\vy}^2 \1_{\dkh{|\va_j^\hh \vz| \le c_2 K \sqrt{\log m}}}}_{\psi_1} \le  c_2^2 K^2 \log m \norms{\abs{\va_j^\hh\vy}^2}_{\psi_1} \lesssim K^4 \log m
	\]
	where $\norms{\cdot}_{\psi_1}$ denotes the sub-exponential norm. This gives
	{\small\[
		\norms{ \abs{\va_j^\hh \vz}^2\abs{\va_j^\hh\vy}^2 \1_{\dkh{|\va_j^\hh \vz| \le c_2 K \sqrt{\log m}}}- \E \zkh{\abs{\va_1^\hh \vz}^2\abs{\va_1^\hh\vy}^2\1_{\dkh{|\va_1^\hh \vz| \le c_2 K \sqrt{\log m}}} } }_{\psi_1}  \lesssim K^4 \log m .
		\]}
	Here, we use the fact $\norms{X-\E X}_{\psi_2} \lesssim \norms{X}_{\psi_2}$ for any subgaussian random variable $X$ \cite[Theorem 2.6.8]{Vershynin2018}. Applying the Bernstein's inequality, one obtains that for any $0<t \le 1$, with probability at least $1-2\exp(-c t^2 m)$ it holds
	{\footnotesize \begin{align*}
			\abs{\frac1m \sum_{j=1}^m \abs{\va_j^\hh \vz}^2\abs{\va_j^\hh\vy}^2  \1_{\dkh{|\va_j^\hh \vz| \le c_2 K \sqrt{\log m}}} - \E\zkh{ \abs{\va_1^\hh \vz}^2 \abs{\va_1^\hh\vy}^2 \1_{\dkh{|\va_1^\hh \vz| \le c_2 K \sqrt{\log m}}}}}
			\le  t K^4 \log m
	\end{align*}}
	where $c>0$ is a universal constant.
	Setting $t=\sqrt{C_{3} n\log m/m}$ for some sufficiently large constant $C_{3}>0$, with probability exceeding $1-2\exp(-c C_{3} n \log m)$ it holds
	\begin{align*}
		\abs{\frac1m \sum_{j=1}^m \abs{\va_j^\hh \vz}^2\abs{\va_j^\hh\vy}^2  \1_{\dkh{|\va_j^\hh \vz| \le c_2 K \sqrt{\log m}}} - \E\zkh{ \abs{\va_1^\hh \vz}^2 \abs{\va_1^\hh\vy}^2 \1_{\dkh{|\va_1^\hh \vz| \le c_2 K \sqrt{\log m}}}}} \\
		\lesssim K^4 \sqrt{\frac{n \log^3 m}{m}}.
	\end{align*}
	Taking the union bound over $\mathcal{N}_{\frac14}$ gives
	\begin{align}
		\norm{\frac1m \sum_{j=1}^m \abs{\va_j^\hh \vz}^2 \va_j\va_j^\hh \1_{\dkh{|\va_j^\hh \vz| \le c_2 K \sqrt{\log m}}} - \E\zkh{ \abs{\va_1^\hh \vz}^2 \va_1\va_1^\hh \1_{\dkh{|\va_1^\hh \vz| \le c_2 K \sqrt{\log m}}}}} \notag\\
		\lesssim K^4 \sqrt{\frac{n \log^3 m}{m}} \label{eq:u14z}
	\end{align}
	with probability at least $1-81^n\cdot 2\exp(-c C_{3} n\log m)$ for the fixed $\vz$.
	
	Now let $\mathcal{N}_\epsilon$ be an $\epsilon$-net of the unit sphere with $\epsilon=O(m^{-3})$, which has cardinality at most $(1+\frac{2}{\epsilon})^{2n}$. Then for any $\vz\in\C^n$ with unit norm, there exist $\vz_0 \in \mathcal{N}_\epsilon$ such that $\norm{\vz-\vz_0}\le \epsilon$. The triangle inequality reveals that
	{\small \begin{eqnarray*}
			&&\norm{\frac1m \sum_{j=1}^m \abs{\va_j^\hh \vz}^2\va_j\va_j^\hh  \1_{\dkh{|\va_j^\hh \vz| \le c_2 K \sqrt{\log m}}} - \E\zkh{ \abs{\va_1^\hh \vz}^2\va_1\va_1^\hh\1_{\dkh{|\va_1^\hh \vz| \le c_2 K \sqrt{\log m}}}}}\\
			& \le& \underbrace{\norm{\frac1m \sum_{j=1}^m \abs{\va_j^\hh \vz}^2 \va_j\va_j^\hh  \1_{\dkh{|\va_j^\hh \vz| \le c_2 K \sqrt{\log m}}} - \frac1m \sum_{j=1}^m \abs{\va_j^\hh \vz_0}^2 \va_j\va_j^\hh  \1_{\dkh{|\va_j^\hh \vz_0| \le c_2 K \sqrt{\log m}}}}}_{:=I_1} \\
			&& + \underbrace{\norm{\frac1m \sum_{j=1}^m \abs{\va_j^\hh \vz_0}^2 \va_j\va_j^\hh  \1_{\dkh{|\va_j^\hh \vz_0| \le c_2 K \sqrt{\log m}}}- \E\zkh{ \abs{\va_1^\hh \vz_0}^2 \va_1\va_1^\hh  \1_{\dkh{|\va_1^\hh \vz_0| \le c_2 K \sqrt{\log m}}}}}}_{:=I_2}\\
			&& + \underbrace{\norm{\E\zkh{ \abs{\va_1^\hh \vz_0}^2 \va_1\va_1^\hh  \1_{\dkh{|\va_1^\hh \vz_0| \le c_2 K \sqrt{\log m}}}}- \E\zkh{ \abs{\va_1^\hh \vz}^2 \va_1\va_1^\hh  \1_{\dkh{|\va_1^\hh \vz| \le c_2 K \sqrt{\log m}}}}}}_{:=I_3}.
	\end{eqnarray*}}
	
For $I_2$, it follows from \eqref{eq:u14z} that
\[
I_2\lesssim K^4 \sqrt{\frac{n \log^3 m}{m}}
\]
holds  for all  $\vz_0\in\mathcal{N}_\ep$ with probability at least $1-\abs{\mathcal{N}_\ep}\cdot 81^n\cdot 2\exp(-cC_{3} n\log m)\ge 1 - O\xkh{m^{-20}}$ as long as $C_3$ is sufficiently large.
	With regard to $I_3$, one sees that
	\begin{align*}
		I_3\le & \underbrace{\norm{\E\zkh{ \abs{\va_1^\hh \vz_0}^2 \va_1\va_1^\hh \1_{\dkh{|\va_1^\hh \vz_0| \le c_2 K \sqrt{\log m}}}}- \E\zkh{ \abs{\va_1^\hh \vz_0}^2 \va_1\va_1^\hh}}}_{:=J_1} \\
		& + \underbrace{\norm{\E\zkh{ \abs{\va_1^\hh \vz_0}^2 \va_1\va_1^\hh}- \E\zkh{ \abs{\va_1^\hh \vz}^2 \va_1\va_1^\hh}}}_{:=J_2} \\
		& + \underbrace{\norm{\E\zkh{ \abs{\va_1^\hh \vz}^2 \va_1\va_1^\hh}- \E\zkh{ \abs{\va_1^\hh \vz}^2 \va_1\va_1^\hh \1_{\dkh{|\va_1^\hh \vz| \le c_2 K \sqrt{\log m}}}}}}_{:=J_3}.
	\end{align*}
	Similar to $\theta_2$, we have that $J_1,J_3\lesssim K^4 m^{-5}$. For the second term, it follows from \eqref{cond:diff of E} that $J_2\lesssim (1+\beta_1)\ep\lesssim m^{-2}$. These reveal that
	\[
	I_3\lesssim K^4 m^{-5} + m^{-2}.
	\]
	Finally, we turn to  $I_1$. For convenience, we denote $S_j=\dkh{\vz | \abs{\va_j^\hh \vz}\le c_2 K\sqrtlm}$. Then $I_1$ can be upper bounded as
	\begin{align*}
		I_1\le& \underbrace{\norm{\frac 1m \sum_{j=1}^{m} \xkh{\abs{\va_j^\hh \vz}^2-\abs{\va_j^\hh \vz_0}^2}\va_j\va_j^\hh\1_{\dkh{\vz,\vz_0\in S_j}}}}_{:=L_1} \\
		&+ \underbrace{\norm{\frac 1m \sum_{j=1}^{m} \abs{\va_j^\hh \vz}^2\va_j\va_j^\hh \1_{\dkh{\vz\in S_j,\vz_0\notin S_j}}}}_{:=L_2} + \underbrace{\norm{\frac 1m \sum_{j=1}^{m} \abs{\va_j^\hh \vz_0}^2\va_j\va_j^\hh \1_{\dkh{\vz\notin S_j,\vz_0\in S_j}}}}_{:=L_3}.
	\end{align*}
 We will consider each item separately.
	\begin{itemize}
		\item First, the Cauchy-Schwarz yields
		\begin{align*}
			L_1\le& \maxm{1\le j\le m} \norm{ \xkh{\abs{\va_j^\hh \vz}-\abs{\va_j^\hh \vz_0}} \xkh{\abs{\va_j^\hh \vz}+\abs{\va_j^\hh \vz_0} }\va_j\va_j^\hh\1_{\dkh{\vz,\vz_0\in S_j}}} \\
			\le& \maxm{1\le j\le m} 2 \norm{\va_j}^4 \norm{\vz-\vz_0} \\
			\lesssim& K^8 m^{-1}
		\end{align*}
		where the last line holds due to $\norm{\vz-\vz_0}\lesssim m^{-3}$,  $m\gtrsim n$, and the fact that  $\maxm{1\le j\le m} \norm{\va_j}\le c_3K^2 \sqrtn$ with probability at least $1-m\exp\xkh{-c_0 n}$.
		
		\item Next,  for  $L_2$, we have
		\begin{align*}
			L_2 \le& \underbrace{\norm{\frac 1m \sum_{j=1}^{m} \abs{\va_j^\hh \vz_0}^2\va_j\va_j^\hh \1_{\dkh{\vz\in S_j,\vz_0\notin S_j}}}}_{:=\tau_1} \\
			&+ \underbrace{\norm{\frac 1m \sum_{j=1}^{m} \xkh{\abs{\va_j^\hh \vz}^2-\abs{\va_j^\hh \vz_0}^2}\va_j\va_j^\hh \1_{\dkh{\vz\in S_j,\vz_0\notin S_j}}}}_{:=\tau_2}.
		\end{align*}
		Using the same technique as  $L_1$, one has $\tau_2\lesssim K^8 m^{-1}$.
		To control $\tau_1$, we claim that
		\begin{equation}\label{ineq:indicator 1}
			\1_{\dkh{\vz\in S_j,\vz_0\notin S_j}}\le \1_{\dkh{c_2K\sqrtlm\le \abs{\va_j^\hh\vz_0}\le c_2K \sqrtlm + c_3 K^2 \sqrtn \ep}}.
		\end{equation}
		Indeed, if we assume $\aabs{\va_j^\hh\vz_0}\ge c_2K \sqrtlm + c_3 K^2 \sqrtn \ep$, then one has
		\begin{align*}
			\aabs{\va_j^\hh\vz} \ge \aabs{\va_j^\hh\vz_0} - \maxm{1\le j\le m}\norm{\va_j} \norm{\vz_0-\vz} \ge c_2K\sqrtlm
		\end{align*}
		which is contradictory to $\vz\in S_j$.
		By taking $\ep=O(m^{-3})$, it then follows from \eqref{ineq:indicator 1} that for any $\vz\in S_j,\vz_0\notin S_j$, it holds
		\begin{equation} \label{eq:ajz0}
			\aabs{\va_j^\hh\vz_0}\le c_2K \sqrtlm + c_3 K^2 \sqrtn \ep \le 2c_2 K\sqrtlm.
		\end{equation}
		Therefore, the triangle inequality gives
		\begin{eqnarray*}
		\tau_1 &\le & \abs{ \tau_1- \E \tau_1}+ \E \tau_1 \lesssim  K^4 \sqrt{\frac{n\log ^3 m}{m}}+ K^4 m^{-5},
		\end{eqnarray*}
		where the last inequality comes from \eqref{eq:u14z} together with the conditon \eqref{eq:ajz0}, and the fact
		\begin{align*}
			\E \zkh{\abs{\va_1^\hh \vz_0}^2\va_1\va_1^\hh \1_{\dkh{\vz\in S_1,\vz_0\notin S_1}}}\lesssim K^4 m^{-5}
		\end{align*}
		due to $\1_{\dkh{\vz\in S_1,\vz_0\notin S_1}}\le \1_{\dkh{\vz_0\notin S_1}}\le \1_{\dkh{\abs{\va_1^\hh \vz_0}\ge c_2 K \sqrtlm}}$. 	
%		Then it follows similarly from the upper bound of $I_2$ that
%		\begin{align*}
%			\norm{\frac 1m \sum_{j=1}^{m} \abs{\va_j^\hh \vz_0}^2\va_j\va_j^\hh \1_{\dkh{\vz\in S_j,\vz_0\notin S_j}} - \E \zkh{\abs{\va_1^\hh \vz_0}^2\va_1\va_1^\hh \1_{\dkh{\vz\in S_1,\vz_0\notin S_1}}}}\\
%			\lesssim K^4 \sqrt{\frac{n\log ^3 m}{m}}
%		\end{align*}
%		for any $\vz_0\in\mathcal{N}_\ep$ with probability exceeding $1 - O\xkh{m^{-20}}$. In addition, similar to $\theta_2$, one has 		
		\item The last item $L_3$ can be controlled by the same argument for $L_2$, namely
		\begin{align*}
			L_3\lesssim K^4 m^{-5} + K^4 \sqrt{\frac{n\log ^3 m}{m}},
		\end{align*}
		where we utilize that
		\begin{equation}\label{ineq:indicator 2}
			\1_{\dkh{\vz\notin S_j,\vz_0\in S_j}}\le \1_{\dkh{c_2K\sqrtlm - c_3 K^2 \sqrtn \ep\le \abs{\va_j^\hh\vz_0}\le c_2K \sqrtlm}}.
		\end{equation}
		The proof of \eqref{ineq:indicator 2} is similar to \eqref{ineq:indicator 1}.
		
		\item Putting the previous bounds of $L_1, L_2$ and $L_3$ together yields
		\begin{align*}
			I_1 \lesssim K^8 m^{-1} + K^4 m^{-5} + K^4 \sqrt{\frac{n \log^3 m}{m}}
		\end{align*}
		with probability exceeding $1 - m\exp\xkh{-c_0 n} - O(m^{-20})$.
		
	\end{itemize}

	Combining the bounds of $I_1, I_2, I_3 $ and $\theta_2$ together, we obtain that for any $\delta>0$, with probability exceeding $1 - m\exp\xkh{-c_0 n} - O(m^{-20})$, it holds
	\[
\norm{\frac1m \sum_{j=1}^m \aabs{\va_j^\hh \vz}^2\va_j\va_j^\hh - \E \zkh{\aabs{\va^\hh \vz}^2\va\va^\hh }}  \le \delta \norm{\vz}^2
\]
	provided $m\ge C_0  \delta ^{-2} K^8n \log^3 m$ for some sufficiently large constant $C_0>0$.  This completes the proof of \eqref{eq:nabal11}.
	
	The proof of \eqref{eq:nabal21} is similar, so we omit it. %And \eqref{eq:nabal31} is given by \eqref{le:noise}.

\end{document}